\documentclass[11pt]{article}

\usepackage{amsmath,mathrsfs,bbding}
\usepackage{amssymb}
\usepackage{amsthm}
\usepackage{appendix}
\usepackage{authblk}
\usepackage{caption}
\usepackage{cases}
\usepackage[margin=2cm]{geometry}
\usepackage{graphicx} 
\usepackage[utf8]{inputenc}
\usepackage{mathtools}
\usepackage{listings}
\usepackage{lscape}
\usepackage{pifont}
\usepackage{tikz,tikz-cd}
\usepackage{titlesec}
\usepackage{xcolor}

\usepackage[
backend=biber,
style=alphabetic,
maxbibnames=99,
maxalphanames=10,     
minalphanames=10     
]{biblatex}
\AtEveryBibitem{\clearfield{note}}
\AtEveryBibitem{\clearfield{url}}
\AtEveryBibitem{\clearfield{issn}}
\AtEveryBibitem{\clearfield{doi}}
\AtEveryBibitem{\clearfield{eprint}}
\DeclareFieldFormat[article,periodical]{volume}{Vol.~#1}
\newtheorem{proposition}{Proposition}[section]
\newtheorem{theorem}[proposition]{Theorem}
\newtheorem{corollary}[proposition]{Corollary}

\newtheorem{lemma}[proposition]{Lemma}
\newtheorem{lemma-definition}[proposition]{Lemma-Definition}
  \theoremstyle{definition}
  \newtheorem{definition}[proposition]{Definition}
  \newtheorem{remark}[proposition]{\it Remark}
  \newtheorem{example}[proposition]{Example}

\newtheorem{maintheorem}{Theorem}
\newtheorem{maincorollary}[maintheorem]{Corollary}

\newcommand{\Q}{{\mathbb Q}}    \newcommand{\bb}[1]{\mathbb #1}

\titleformat{\section}
{\large\bfseries}
{\thesection}{1em}{}

\titleformat{\subsection}
{\normalsize\bfseries}
{\thesubsection}{1em}{}

\titlespacing*{\section}
{0pt}      
{1.5ex}      
{0.5ex}    
\titlespacing*{\subsection}
{0pt}      
{1ex}      
{0.5ex}    

\makeatletter
\newcommand{\address}[1]{\gdef\@address{#1}}
\newcommand{\email}[1]{\gdef\@email{\url{#1}}}
\newcommand{\@endstuff}{\par\vspace{\baselineskip}\noindent\small
\begin{tabular}{@{}l}\scshape\@address\\\textit{E-mail address:} \@email\end{tabular}}
\AtEndDocument{\@endstuff}
\makeatother
\title{K-stability of a class of smooth Fano complete intersections}
\author{Yijue Hu}
\address{School of Mathematical Sciences, University of Nottingham}
\email{yijue.hu@nottingham.ac.uk}
\usepackage{lipsum}
\date{}

\begin{document}
\maketitle

\begin{abstract}
We develop an approach to study K-stability of Fano varieties by estimating the local stability threshold along a reducible zero-dimensional subscheme on a smooth curve. As an application, we we show that all smooth index-one Fano hypersurfaces in quadrics are K-stable, when the dimension is higher than one. To prove it, we apply the Abban-Zhuang method to reduce the stability problem to lower dimensions. Specifically, we construct suitable admissible flags such that the last term of the Abban-Zhuang estimate is a local stability threshold along multiple points on a smooth curve, and give a convex geometric interpretation of the estimate. 
\end{abstract}

\section{Introduction}
Since its introduction by Mumford in the 1960s, geometric invariant theory (GIT) has become a powerful tool in algebraic geometry, especially in moduli theory. However, the setup of GIT is limited to address broader moduli problems, including Fano varieties in general. The notion of K-stability was then introduced, inspired by the Hilbert--Mumford criterion in GIT, to provide another framework. In particular, K-moduli spaces exist and parametrize K-polystable Fano varieties with certain fixed numerical invariants. We refer the reader to the comprehensive book [Xu25] for details. 

On the other hand, GIT stability, when it can be studied explicitly, often sheds light on the study of K-stability. One may ask whether a GIT stable Fano variety is K-stable, or if we can compare the K-moduli spaces of some family of varieties with its GIT moduli, or more generally, with a variation of GIT (VGIT) quotient. 

These questions have motivated substantial research. For example,
Liu and Xu \cite{LiuXucubicthreefold} showed that the K-moduli of cubic threefolds is identical to its GIT moduli. Based on this, a complete classification of K-(semi/poly)stable cubic threefolds is derived. A follow-up work \cite{liu2022CubicFourfold} showed the same results for cubic fourfolds. However, this does not always hold true. In fact, such an isomorphism is expected only for cubic hypersurfaces. For example, a multiple of a smooth quadric is always GIT-polystable but K-unstable, as it is non-reduced. It can also fail for non-hypersurface Fano varieties, for example, \autocite[Theorem 1.4]{ADL2023QuarticK3Surfaces} gave an explicit analysis of K-moduli and GIT moduli space of quartic double solids.

Concerning Fano hypersurfaces, since every smooth hypersurface with degree $d\geq 3$ is GIT stable, it is then natural to expect them to be K-stable as well. To summarize the known results, we firstly recall the notion of Fano index. For a Fano variety $X$, its Fano index $r(X)$ is defined as the largest integer $r$ such that the anticanonical divisor $-K_X\sim_{\Q}rL$ for some Cartier divisor $L$. For example, for a hypersurface of degree $d$ in $\bb{P}^{n+1}$, its Fano index is simply $n+2-d$ by adjunction. Projective spaces $\bb{P}^n$ and smooth quadric hypersurfaces are known to be strictly K-polystable, and a general smooth Fano hypersurface of degree $d\geq 3$ is K-stable. We refer to \autocite[Corollary 4.17]{ZhuangOptimalDestabilizing} for an algebraic proof. In low degree, \cite{LiuXucubicthreefold} and \cite{LiuCubicFourfold} showed that all smooth cubic threefolds and fourfolds are K-stable, by relating the notions of K-stable and GIT stable, as discussed above. At the opposite end, when the degree is high, which is equivalent to the Fano index being low, Fujita \autocite[Example 1.4]{Fujita_2019} proved that every smooth Fano hypersurface of Fano index 1 is K-stable; and the index 2 case is proved by Abban and Zhuang \autocite[Theorem 4.12]{AZ1}. A subsequent work \cite{AZ2} of the same two authors showed that every smooth Fano hypersurface of Fano index $r\geq 3$ and dimension $n \geq r^3$ is K-stable. This is the strongest result currently known. The K-stability problem for smooth hypersurfaces of intermediate degrees remains open.

Fujita's work considered the alpha invariant of Fano varieties, which gives a sufficient, but not necessary, criterion for K-stability. The delta invariant, also known as the stability threshold, introduced by Fujita and Odaka in \cite{FujitaOdakaLowerAlpha} is better suited to explicit K-stability problems. In \cite{AZ1}, Abban and Zhuang developed a general method to estimate the delta invariant, which is now commonly referred to as the Abban--Zhuang method. The core idea is to reduce the K-stability problem to the lower-dimensional cases. 

As complete intersections are a natural generalization of hypersurfaces, their GIT and K-stability properties are also of interest. They are less well understood in general, and the connection between them is not as close. On the GIT side, Benoist \cite{BenoistCompleteIntersection} discussed GIT stability of complete intersections with various multidegrees and under different parameter spaces. In particular, under a natural parameter space and group action, there always exists a linearization assuring all smooth complete intersections are stable. Moreover, moduli spaces are studied for a few low-degree complete-intersection curves or surfaces. On the K-stability side, it is recently proved in \cite{generalCI} that a general smooth Fano complete intersection, which is not $\bb{P}^n$ or a quadric, is K-stable. Moreover, partial results are known when the degrees of hypersurfaces are low, or the Fano index is low, as one could expect. More specifically, a classical result of \autocite[Corollary 3.1]{Ghigi2004symmetries} shows that every smooth complete intersection of two quadrics in $\bb{P}^n$ with $n>3$ is K-stable, while \autocite[Theorem 5.1]{AZ2} implies that every smooth complete intersection of three quadrics in $\bb{P}^6$ is K-stable. In addition, \autocite[Theorem 1.3]{ZhuangSupperrigidity} showed that every smooth complete intersection $X_{2,3}\subseteq \mathbb{P}^5$ of a quadric and a cubic is K-stable. The second part of the same theorem tells that every smooth complete intersection $X \subseteq \mathbb{P}^{n+r}$ of index 1, codimension $r$, and dimension $n \geq 10r$ is K-stable. In particular, when~$r=2$, $n\geq 12$ is sufficient. 

The proof in \cite{ZhuangSupperrigidity} relies on the birational superrigidity property  of index one Fano complete intersections, which currently is only known in relatively high dimension, and expected for only general ones in lower dimension. To study their K-stability in low dimension, one needs to seek other techniques. The Abban--Zhuang method provides such an approach. To estimate the local stability threshold along a closed subset $Z\subset X$ of a Fano variety $X$, we can pick a suitable admissible flag terminating at a variety $Y$ of lower dimension, and reduce the problem to estimating the delta invariant of a refined multigraded linear series on $Y$ along $Z\cap Y$. The terminologies and estimate are discussed in detail in Section \ref{preliminary}. We apply this method to prove the following theorem. 

\begin{maintheorem}\label{main theorem}
    Let $\mathcal{X}_{2,n}$ denote the family of complete intersections of degree $2$ and degree $n$ hypersurfaces in $\mathbb{P}^{n+2}$. Every smooth member of $\mathcal{X}_{2,n}$ is K-stable for all $n>1$.
\end{maintheorem}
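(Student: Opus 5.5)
We want $\delta(X)>1$ for every smooth $X=Q\cap F_n\subset\mathbb{P}^{n+2}$, where $Q$ is a quadric and $F_n$ a hypersurface of degree $n$. Write $H$ for the hyperplane class. Adjunction gives $-K_X\sim H$, so $X$ has index one. Since $\mathrm{Aut}(X)$ is finite, it suffices to show $\delta_p(X)>1$ at every closed point $p\in X$. We do this with the Abban--Zhuang estimate recalled in Section~\ref{preliminary}, reducing to a local stability threshold on a curve.

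\textbf{Step 1: the flag.} Fix $p$ and take a general linear section through $p$,
\[
X=Y_n\supset Y_{n-1}\supset\cdots\supset Y_1=C,
\]
where $Y_{i}=Y_{i+1}\cap H_i$ for general hyperplanes $H_i\ni p$. By Bertini (applied to the linear system of hyperplanes through $p$), each $Y_i$ is smooth at $p$ and normal. The last term $C$ is a smooth complete intersection curve of type $(2,n)$ in $\mathbb{P}^3$ passing through $p$.

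\textbf{Step 2: run the iteration.} At each step the refined linear series is the restriction of $|mH|$, filtered by vanishing order along the next flag term. Since every divisor in the flag is hyperplane-cut, the relevant volumes are computed on a graded series that is essentially a truncated polynomial. This gives $S(X;Y_{n-1})=\frac{1}{n+1}$ and $A=1$, so the first factor $A/S=n+1$ is large. The same holds at each intermediate stage.

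\textbf{Step 3: the curve term.} This is where the reducible zero-dimensional subscheme enters. The intersection $Z=C\cap Y_{n-1}$-type data, together with the base locus of the refined series, produce several points of $C$, namely the $2n$ points of $C\cap H$. The last term is therefore a local stability threshold of a multigraded series $W$ on $C$ along a multipoint $Z$. I would compute $S(W;Z)$ through the convex-geometric (Okounkov body) description. The refined series at the curve level corresponds to a polytope over the simplex of slopes $\{(t_1,\dots,t_{n-1}): \sum t_i\le 1\}$. The fibre degree is $\deg(H|_C)\bigl(1-\sum t_i\bigr)=2n\bigl(1-\sum t_i\bigr)$, distributed among the $2n$ points, so each point receives weight about $1/(2n)$ of the total. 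Integrating should give $S(W;p)$ comparable to $\frac{1}{n+1}$ in the worst case, hence $\delta_p\ge \min\{\ldots\}>1$. The role of reducibility is that the local threshold along $Z$ is governed by the individual points rather than their sum, which is what keeps the estimate above $1$.

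\textbf{Main obstacle.} The hardest part is points $p$ where a general flag is not generic enough. These include points where $C$ is forced to have a special tangent behaviour, and points lying on lines contained in $X$ or in special tangent hyperplane sections (for instance where $T_pX\cap X$ is a cone). There the multipoint $Z$ may collapse, or the curve may be tangent to $H$ at $p$, which makes the local threshold drop. My approach would be to choose the flag adapted to $p$: replace the general hyperplane section by a section of $Q$ by tangent-free planes, or use $Y_{n-1}=X\cap T_pQ$ when it is irreducible. I would then bound the multiplicity of $p$ on $Z$ by the local intersection number, which is at most $2$ from the quadric. After that, check that the convex-geometric integral still exceeds the needed threshold, handling the small cases $n=2,3$ separately if the uniform bound fails.
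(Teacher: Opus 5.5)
There is a genuine gap in Step 3, and it sinks the pointwise strategy. When the center is a single point $Z=\{p\}$, the last Abban--Zhuang term is $\delta_{Z\cap C}(C;W,\mathcal F)=\delta_p(C;W)$, a threshold at the one point $p$. The other points of $C\cap H$ play no role. The reason is that $\operatorname{lct}_Z$ only asks for log canonicity at a point of $Z\cap C=\{p\}$, and basis type divisors compatible with $\operatorname{ord}_p$ put all their vanishing at $p$. So nothing is ``distributed among $2n$ points''.

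With your flag of general hyperplane sections, $W=W^{(n-1)}$ has no fixed part and $c_1(W)=\frac{2}{n+1}H|_C$. Lemma \ref{induction formula} then gives
\[
S(W;p)=\tfrac{2}{n+1}\cdot\tfrac{\deg H|_C}{2}=\tfrac{2n}{n+1}.
\]
This is not comparable to $\frac{1}{n+1}$; your simplex integral, done without the division by $2n$, gives exactly this value. Hence the curve term is $\frac{n+1}{2n}<1$. By Lemma \ref{valuative criterion for multigraded}, $\delta_p(C;W,\mathcal F)$ does not depend on $\mathcal F$, so no filtration rescues it. The method only yields $\delta_p(X)\ge\frac{n+1}{2n}$, already at a general point. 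Adapting the flag to special points, as in your last paragraph, therefore cannot fix it.

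The missing idea is Lemma \ref{criterionInAZ}: you should not try to bound $\delta_p$ by a hyperplane flag at all.
\begin{itemize}
\item[(a)] Points are handled only through $\beta_X(E_x)>0$, i.e.\ $S(-K_X;E_x)<n$. The paper obtains this from Zhuang's inequality $S\le\frac{1}{n+1}T+\frac{n-1}{n+1}\eta$, with $T(H;E_x)\le 2n$ and $\eta(H;E_x)\le\sqrt{2n}$.
\item[(b)] The multipoint estimate is used for positive-dimensional centers, where it is genuinely available. For an irreducible curve $C\subset X$, cut by hyperplanes through two points $P,Q\in C$. The last flag member then meets $C$ in $k\ge 2$ points, and Lemma \ref{GeneralizedLocalDeltaOnCurve} gives $\frac{2k}{\deg c_1(W)}\ge\frac{4}{4n/(n+1)}=\frac{n+1}{n}$. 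This is exactly the stronger bound the criterion demands.
\end{itemize}

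The real special cases are then not tangency phenomena at $p$, but the following:
\begin{itemize}
\item[(i)] Curves all of whose secant lines lie in $X$. Here the flag stays smooth only down to a surface $S\subset\mathbb P^4$. One takes $Y_{n-1}=\overline{PQ}$ with $(L^2)=1-n$, and controls the fixed part via the convex body of Example \ref{Fujitas Convex Body Barycenter} together with $T(H;L)\le\frac{2n-1}{n}$.
\item[(ii)] $C$ itself a line.
\end{itemize}
Finally, $n=2$ (quartic del Pezzo surfaces) needs a separate argument, which the paper takes from Tian--Yau together with finiteness of $\operatorname{Aut}(X)$.
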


This result fills the gap for a special case in \autocite[Theorem 1.3]{ZhuangSupperrigidity} when $r=2$ and $n$ is small, while our argument treat all $n\geq 3$ uniformly. The main idea of the proof is inspired by the approach of \autocite[\textsection 4.5]{AZ1} for establishing K-stability of Fano hypersurfaces of index 2. Their argument estimated the local stability threshold of such hypersurfaces along an arbitrary curve, by choosing a suitable flag terminating at a one-dimensional subvariety meeting the curve at multiple points. More precisely, for any variety $X$ in our family, and any irreducible curve $C\subset X$, we choose the flag to be as simple as possible, typically using hyperplane sections, while ensuring that each member meets $C$ in at least 2 points, say $P$ and~$Q$. Let the flag terminate at a one-dimensional subvariety $Y$, we then estimate the delta invariant of $Y$ along~$C\cap Y$, which includes at least two points $\{P,Q\}$. Two special cases require careful treatment: the case that the line joining $P$ and $Q$ is contained in $X$, and consequently contained in each flag member of the hyperplane section form, and the case $C$ itself is a line. This will be elaborated in Section \ref{proof of theorem A}. 

The proof strategy highlights the importance of the local stability threshold along multiple points, or more generally, along reducible locus, in explicit K-stability problems. The idea already appears in \cite{AZ1}, but it is not developed systematically there. In Section \ref{Stability Threshold Along Several Points} we study this object in detail.
This analysis leads to a generalization of a key lemma \autocite[Lemma 4.16]{AZ1} to our Lemma \ref{GeneralizedLocalDeltaOnCurve}. 

Moreover, for an admissible flag terminating at a one-dimensional subvariety, we associate to it a convex body closely related to an Okounkov body, a classical convex-geometric object in algebraic geometry. Then we relate the local stability threshold, together with the other terms appearing in the Abban--Zhuang's estimate, to the coordinates of the barycenter of this convex body. Furthermore, we show that, under suitable assumptions on the admissible flag, the problem can be reduced to a lower-dimensional convex body. When the dimension is sufficiently low, the coordinates of its barycenter can be estimated practically. Finally, in Section \ref{proof of theorem A}, we apply these results to prove Theorem \ref{main theorem}.

Although K-stability has become a powerful framework in moduli problems, it was originally introduced to give an algebro-geometric characterization of the existence of K\"ahler--Einstein metrics on Fano manifolds. The celebrated Yau--Tian--Donaldson conjecture, which asserts that a Fano manifold admits a K\"ahler--Einstein metric if and only if it is K-polystable, has been proved through a series of works, see \cite{ChenDnoaldosnSunI}, \cite{ChenDnoaldosnSunII}, \cite{ChenDnoaldosnSunIII}, \cite{TianK-stability}, and \cite{Berman}. More recently, substantial progress has also been made in the more general setting, that is, the existence of a constant scalar curvature K\"ahler metric on a polarised smooth variety, see \cite{trusiani2026YTD}. Therefore we have the following straightforward corollary.

\begin{maincorollary}
    For $n>1$, every smooth member of $\mathcal{X}_{2,n}$ admits a K\"ahler--Einstein metric.
\end{maincorollary}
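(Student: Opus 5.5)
The corollary follows formally from Theorem \ref{main theorem} together with the Yau--Tian--Donaldson correspondence. I will use the direction proved in \cite{ChenDnoaldosnSunI,ChenDnoaldosnSunII,ChenDnoaldosnSunIII,TianK-stability}: a K-polystable Fano manifold admits a Kähler--Einstein metric. So the plan is to check that a smooth member of $\mathcal{X}_{2,n}$ is a Fano manifold, that it is K-polystable, and then apply that theorem.

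\textbf{Step 1: $X$ is a Fano manifold.} Let $X\in\mathcal{X}_{2,n}$ be smooth with $n>1$. Then $X$ is a smooth complete intersection of dimension $n$ in $\mathbb{P}^{n+2}$. By adjunction,
\[
-K_X=\mathcal{O}_X(n+3-2-n)=\mathcal{O}_X(1),
\]
which is ample. So $X$ is a smooth Fano manifold of index one.

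\textbf{Step 2: $X$ is K-polystable.} Theorem \ref{main theorem} says $X$ is K-stable. K-stability implies K-polystability; for instance, K-stability is equivalent to uniform K-stability, i.e.\ $\delta(X)>1$, which implies K-polystability.

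\textbf{Step 3: conclusion.} Applying the Chen--Donaldson--Sun/Tian theorem to the smooth K-polystable Fano manifold $X$ gives a Kähler--Einstein metric on $X$.

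Since $X$ is K-stable rather than just K-polystable, one can also note that $\mathrm{Aut}(X)$ is finite. This is consistent with the Kähler--Einstein metric being unique, and it also follows directly because $H^0(X,T_X)=0$ for these complete intersections. It is not needed for the argument.

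None of the steps is a real obstacle. The only things to check are that the complex manifold attached to $X$ is the one the analytic theorem applies to, and that we are using the correct direction of Yau--Tian--Donaldson (K-polystable implies Kähler--Einstein). Both are standard.
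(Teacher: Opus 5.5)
Your proposal is correct and matches the paper, which calls this a straightforward corollary: Theorem~\ref{main theorem} gives a K-stable, hence K-polystable, Fano manifold with $-K_X=\mathcal{O}_X(1)$, and the existence direction of the Yau--Tian--Donaldson correspondence then gives the K\"ahler--Einstein metric. One small citation point: in this paper's bibliography \cite{TianK-stability} is Tian's 1997 article, which treats the converse direction (K\"ahler--Einstein $\Rightarrow$ K-stability), so the direction you use should be credited to \cite{ChenDnoaldosnSunI,ChenDnoaldosnSunII,ChenDnoaldosnSunIII}.
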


\noindent\textbf{Acknowledgments.} The author would like to thank her PhD supervisor, Hamid Abban, for suggesting this problem and for his constant support. She would also like to thank Kento Fujita for many valuable discussions and for carefully reading earlier drafts of this paper. The research is supported by the University of Nottingham EPSRC DTP grant EP/W524402/1. Partial support was also received from the Royal Society International Collaboration Award ICA$\backslash$1$\backslash$231019.

\section{Preliminaries}\label{preliminary}
In this section we recall the notions needed to define K-stability, and then state and prove a simple version of the Abban--Zhuang method. Readers familiar with this may proceed directly to the next section.

\subsection{Basic Notions}\label{basic notions}
We work over $\mathbb{C}$, and all varieties are assumed to be integral, normal and projective, unless otherwise stated. A valuation on a variety $X$ is a valuation $v\colon \bb{C}(X)^{\times}\rightarrow \bb{R}$ which is trivial on $\bb{C}^{\times}$. It is a trivial valuation if it is identically zero. 
    
A ($\mathbb{Q}$-)Fano variety $X$ is a variety with at worst klt singularities such that the anti-canonical divisor~$-K_X$ is ($\mathbb{Q}$-)Cartier and ample. A log pair $(X,\Delta)$ consists of a variety $X$ together with an effective $\mathbb{Q}$-divisor $\Delta$ such that $K_X+\Delta$ is a $\Q$-Cartier $\Q$-divisor. It is called a klt pair if $(X,\Delta)$ is moreover klt. And a log Fano pair is a klt pair such that $-(K_X+\Delta)$ is ample.

For a log pair $(X, \Delta)$, the non-lc locus is $\operatorname{Nlc}(X, \Delta):=\{x\in X | (X, \Delta)$ is not  log canonical at $x\}$. Moreover, given a closed subset $Z\subset X$ and an effective divisor $D$ on $X$, the log canonical threshold (lct) is defined as $\operatorname{lct}_Z(X, \Delta; D):=\sup\{\lambda\in \bb{Q}_{\geq 0}\mid Z\not\subseteq\operatorname{Nlc}(X, \Delta+\lambda D)  \}$.

\begin{definition}
    (Filtration) Let $V$ be a finite-dimensional vector space. A (non-increasing, bounded) \textit{filtration} $\mathcal{F}$ on $V$ is given by a collection of subspaces $\mathcal{F}^{\lambda}V$ indexed by a totally ordered abelian monoid~$\Lambda$ such that $\mathcal{F}^{\lambda_0}V=V, \mathcal{F}^{\lambda_1}V=0$ for some $\lambda_0, \lambda_1\in \Lambda$ and $\mathcal{F}^{\lambda}V\subseteq \mathcal{F}^{\lambda^{\prime}}V$ whenever $\lambda\geq \lambda^{\prime}$. 
\end{definition}

\begin{definition}\label{m-basia type}($m$-basis type divisor)
    Let $X$ be a variety and $L$ be a $\Q$-Cartier $\Q$-divisor on $X$. Let~$m$ be a positive integer such that $V=H^0(X,mL)$ is non-zero. Then an \textit{$m$-basis type divisor} $D_m$ of $L$ is of the form $D_m=\sum \operatorname{div}(s_i)$ (by abuse of notation, here we use $\operatorname{div}(s)$ to represent the effective divisor~$\operatorname{div}(s)+mL$) where $\{s_1,\dots,s_{h^0(X,mL)}\}$ form a basis of $H^0(X,mL)$. And an \textit{$m$-basis type $\Q$-divisor} $D$ of $L$ is $D=\frac{1}{m\cdot h^0(X,mL)}D_m$ for some $m$-basis type divisor $D_m$. It is obvious that $D\sim_{\Q}L$. If $\mathcal{F}$ is a filtration on $V$, we say $D$ is an $m$-basis type ($\Q$-)divisor of $L$ \textit{compatible with $\mathcal{F}$} if for every $\lambda\in \Lambda$, the subspace $\mathcal{F}^{\lambda}V\subseteq V$ is spanned by a subset of $\{s_1,\dots, s_{h^0(X,mL)}\}$.
\end{definition}

\begin{definition}\label{S_m}
    In the setting of the previous definition, let $v$ be a valuation on $X$. For a Cartier divisor~$D$ on $X$, locally defined by $f=0$ on some open affine subset of $X$, we view $f$ as a rational function on $X$ and set $v(D):=v(f)$. For a $\Q$-Cartier $\bb{Q}$-divisor $D$ on X we define $v(D):=\frac{1}{m}v(mD)$ for any $m\in \bb{Z}_{>0}$ such that $mD$ is Cartier. It is easy to see from the definition that $v$ is linear, i.e.\ for a $\bb{Q}$-Cartier $\bb{Q}$-divisor~$D=\sum a_iD_i$ we have $v(D)=\sum a_i v(D_i)$. 
        
    Then we define $$S_m(L;v)=\sup v(D),$$
    where the supremum runs over all $m$-basis type $\Q$-divisors of $L$.
\end{definition}

\begin{definition}\label{volume and S-invariant}
    Let $X$ be a variety of dimension $n$, and $L$ be a big $\Q$-Cartier $\Q$-divisor on $X$. The \textit{volume} of $L$ is defined as 
    $$\mathrm{vol}(L)=\lim\limits_{m\rightarrow \infty}\frac{\dim H^0(X,mL)}{m^n/n!},$$ for sufficiently divisible integers $m$. 

    We call $E$ a prime divisor \textit{over} $X$ if there exists a birational morphism $\pi:Y\rightarrow X$ such that $E$ is a prime divisor on $Y$. The image $\pi(E)$ is called the \textit{center} of $E$, denoted by $C_X(E)$. 
        
    Given a prime divisor $E$ over $X$, the \textit{pseudo-effective threshold} is defined as 
    $$T(L;E)=\sup \{t\in \Q_{>0}\mid \mathrm{vol}(\pi^*{L}-tE)> 0\}.$$
    Note that in the vector space $N^1(Y)_{\bb{R}}$, the closure of the cone of big divisors coincides with the closure of the cone of effective divisors: $\overline{\operatorname{Big}(Y)}=\overline{\operatorname{Eff}(Y)}$. Therefore $T(L;E)=\sup \{t\in \Q_{>0}\mid [\pi^*{L}-tE]\in \operatorname{Eff}(Y)  \}$. This justifies the name of pseudo-effective threshold.
        
    The \textit{movable threshold} is defined as 
    \begin{align*}
        \eta(L;E)=\sup\{&t\in\Q_{>0}\mid \exists\ m\in \bb{Z}_{>0}\text{ such that}\\
        &\text{any codimension one closed subset of base locus of } |m\pi^*L-mtE| \text{ is exceptional over } X\}.
    \end{align*}
    Clearly we have $\eta(L;E)\leq T(L;E)$.
         
    We define $$S(L;E)=\frac{1}{\mathrm{vol}(L)}\int^{T(L;E)}_0 \mathrm{vol}(\pi^*L-tE)dt.$$

    It is easy to show that the values $T(L;E), \eta(L;E)$ and $S(L;E)$ only depend on the valuation $\operatorname{ord}_E$ and is independent of the choice of the birational model $\pi:Y\rightarrow X$. 
\end{definition}
\begin{remark}
    By \autocite[Theorem 3.3]{BJThresholdsValuationandK-stability}, when $v=\mathrm{ord}_E$ for some prime divisor $E$ over $X$, we have~$\lim\limits_{m\rightarrow\infty}S_m(L; \mathrm{ord}_E)=S(L;E).$
\end{remark}

\begin{lemma}\label{lemma on movable threshold}
    Notation as above. Assume $n=\dim X\geq 2$. Take any smooth closed point $x\in X$, and let $E$ be the exceptional divisor of the blowup at $x$.

    1. We have $T(L;E)=\sup \{\mathrm{mult}_x(D)\mid 0\leq D\sim_{\Q} L\}$.

    2. If $L$ is very ample, then $\eta(L;E)\leq \sqrt{(L^n)}$.
\end{lemma}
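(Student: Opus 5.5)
For part 1, I will compare the two sides through sections of $m(\pi^*L - tE)$. Let $\pi\colon Y\to X$ be the blowup at the smooth point $x$. Since $x$ is a smooth point of $X$, for any effective $\Q$-Cartier $\Q$-divisor $D$ we have $\pi^*D = \tilde D + \mathrm{mult}_x(D)E$, with $\tilde D$ the strict transform.

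\textbf{The bound $\geq$.} Given $0\le D\sim_\Q L$ with $\mathrm{mult}_x D = t$, the class $\pi^*L - tE$ is represented by the effective divisor $\tilde D$, so it lies in $\operatorname{Eff}(Y)$. By the description of $T$ via $\operatorname{Eff}(Y)$ recalled in Definition~\ref{volume and S-invariant}, this gives $T(L;E)\ge t$.

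\textbf{The bound $\le$.} Take any rational $t<T(L;E)$. Then $\pi^*L - tE$ is big, so for $m$ divisible enough there is a nonzero $s\in H^0(Y, m\pi^*L - mtE)$. Since $\pi_*\mathcal{O}_Y(-mtE)=\mathfrak{m}_x^{mt}$, the section $s$ corresponds to a section of $H^0(X,mL)$ vanishing to order at least $mt$ at $x$. Put $D = \frac1m \operatorname{div}(s)$. Then $0\le D\sim_\Q L$ and $\mathrm{mult}_x D\ge t$. Taking the supremum over such $t$ gives equality. This part is routine.

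For part 2, suppose $t<\eta(L;E)$. Then for some $m$ the base locus of $|m\pi^*L - mtE|$ has no divisorial component other than (possibly) $E$. The plan is to intersect two general members of the linear system with $n-2$ general hyperplanes and compare multiplicities at $x$.

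\textbf{Step 1: reduce to a surface.} Because $L$ is very ample, cut $X$ by $n-2$ general members $H_1,\dots,H_{n-2}$ of $|L|$ passing through $x$. The result is a surface $S$, smooth at $x$ by Bertini for very ample systems through a point, with $(L^2\cdot S)=(L^n)$. The blowup of $S$ at $x$ is the strict transform of $S$ in $Y$, and $E$ restricts to the exceptional curve $e$ of this blowup.

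\textbf{Step 2: find two divisors without common components.} Let $D_1, D_2$ be general members of $|mL|$ whose pullbacks have multiplicity at least $mt$ at $x$, i.e. general elements of the linear system $|m\pi^*L - mtE|$ pushed down. Their divisorial base components are exceptional over $X$, so they are supported on $E$. Hence $D_1, D_2$ share no common component on $X$. The same should hold after restricting to a general $S$ through $x$, since the base locus of the restricted system has codimension at least two away from $x$.

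\textbf{Step 3: intersect.} On $S$, the intersection $D_1|_S\cdot D_2|_S$ is then a 0-cycle of degree $m^2(L^n)$, and its local intersection multiplicity at $x$ is at least $\mathrm{mult}_x(D_1|_S)\,\mathrm{mult}_x(D_2|_S)\ge (mt)^2$. This gives $t^2\le (L^n)$. Letting $t\to\eta(L;E)$ yields the claim.

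The main obstacle is Step 2 in the general-position argument: ensuring that the restrictions to $S$ have no common curve component through $x$. Here $S$ is forced to pass through $x$, so it is not fully general. I would handle this in one of two ways. The first is to note that the base locus of the movable part has codimension at least $2$ in $X$, so its intersection with $n-2$ general hyperplanes through $x$ is finite near $x$; a common curve would have to lie in the base locus, or be the intersection of two general divisors, which is ruled out by choosing them in general position relative to $S$. The alternative is to avoid restricting altogether and compute $(\pi^*(mL)-mtE)^2\cdot(\pi^*L-E)^{n-2}\ge 0$. This holds because $\pi^*L-E$ is nef when $L$ is very ample, and because $\pi^*(mL)-mtE$ has no fixed divisor other than $E$ up to a nonnegative correction. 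Expanding gives $m^2(L^n)-m^2t^2\cdot(\text{a positive term})\ge 0$, which again yields $t^2\le (L^n)$.
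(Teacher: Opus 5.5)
Your proposal is correct and follows the paper's proof. Part 1 uses the effectivity of $\pi^*D-\mathrm{mult}_x(D)E$, and part 2 takes two divisors with multiplicity at least $mt$ at $x$ and no common components, intersects them with $n-2$ general members of $|L|$ through $x$, and compares $m^2(L^n)$ with $(mt)^2$. Your Step~2 worry is settled as you suggest: fix $D_1,D_2$ first so that $D_1\cap D_2$ has pure codimension two, then note that general members of $|L\otimes\mathfrak{m}_x|$, whose base locus is only $\{x\}$, cut it to a finite set. This is exactly the general-position step the paper leaves implicit.
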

\begin{proof}
    Note that equivalently we have $T(L;E)=\sup\{t\in \bb{Q}_{>0}\ |\ \pi^*D-tE$ is effective for some $D\sim_{\bb{Q}}L\}$, then the first claim is straightforward. 

    For the second claim, assuming $\eta(L;E)> \sqrt{(L^n)}$, we can find effective $\Q$-Cartier $\Q$-divisors $D_1,D_2\sim_{\Q}L$ with no common components and $\mathrm{mult}_x(D_i)> \sqrt{(L^n)}$ for $i=1,2$. Pick $H_3,\dots ,H_n\in |L|$ in general position passing through $x$, such that $\operatorname{mult}_x(H_j)=1$, then we have $$(L^n)=(D_1\cdot D_2 \cdot H_3\dots H_n)\geq \mathrm{mult}_x(D_1)\cdot \mathrm{mult}_x(D_2)\cdot \prod_{j=3}^n \operatorname{mult}_x(H_j)> (L^n).$$
    This gives a contradiction, hence we complete the proof.
\end{proof}
   
\begin{definition}($\beta$-invariant)
    Let $(X, \Delta)$ be a log Fano pair and $E$ be a prime divisor over $X$.

    The \textit{log discrepancy} of $E$ is defined as 
    $$A_{X,\Delta}(E)=1+\mathrm{ord}_{E}(K_Y-\pi^*(K_X+\Delta)),$$
    where $\pi: Y\rightarrow X$ is a birational morphism such that $E$ is a divisor on $Y$.
       
    Let $v$ be a non-trivial valuation on $X$, we can also define $A_{X,\Delta}(v)$ the log discrepancy of $v$. For the rigorous definition, we refer to \cite{JonssonMustata}. This definition is compatible with the one for divisors: if $E$ is a prime divisor over $X$, it induces a valuation $\operatorname{ord}_E$, and $A_{X,\Delta}(E)=A_{X,\Delta}(\operatorname{ord}_E)$.

    The \textit{$\beta$-invariant} of $E$ is defined as 
    $$\beta_{X, \Delta}(E)=A_{X, \Delta}(E)-S(-K_X-\Delta;E).$$
\end{definition}

\begin{definition}\label{delta invariant}($\delta$-invariant)
    Let $(X, \Delta)$ be a klt pair. Let $Z\subset X$ be a (possibly reducible) closed subset and $L$ be an ample $\bb{Q}$-Cartier $\Q$-divisor on $X$. Define $$\delta_m(X, \Delta;L)=\inf\limits_{D} \operatorname{lct}(X, \Delta;D),\quad\delta_{Z, m}(X, \Delta;L)=\inf\limits_{D} \operatorname{lct}_Z(X, \Delta;D),$$ where the infimum runs over all $m$-basis type $\Q$-divisor $D$ of $L$.
    The \textit{stability threshold}, also known as the \textit{$\delta$-invariant} is defined as 
    $$\delta(X, \Delta;L)=\limsup_{m\rightarrow \infty}\delta_m(X, \Delta;L),\quad\delta_Z(X, \Delta;L)=\limsup_{m\rightarrow \infty}\delta_{Z, m}(X, \Delta;L).$$
    When $(X, \Delta)$ is a log Fano pair, we sometimes write $\delta_{Z}(X, \Delta):=\delta_{Z}(X, \Delta; -K_X-\Delta)$ for simplicity, and we may omit $\Delta$ if $\Delta=0$.
\end{definition}

Equivalently, it can be defined in the following way.
\begin{theorem}(\autocite[Theorem 4.4]{BJThresholdsValuationandK-stability})\label{delta invariant as inf A/E}
    With the same setting as above, if the closed subset $Z\subset X$ is moreover irreducible, then we have
    \begin{align*}
        \delta(X, \Delta;L)
        =\inf \frac{A_{X,\Delta}(E)}{S(L;E)},\quad \delta_Z(X, \Delta;L)
        =\inf \frac{A_{X, \Delta}(E)}{S(L;E)}, 
    \end{align*}
    where the infimum runs over all prime divisors $E$ over $X$(resp. all such divisors $E$ with $C_X(E)\supseteq Z$).
\end{theorem}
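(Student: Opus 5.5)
The plan is to prove both equalities through a single intermediate quantity, the valuative threshold $\delta^{\mathrm{val}}_Z:=\inf A_{X,\Delta}(E)/S(L;E)$. Here the infimum runs over prime divisors $E$ over $X$ whose center contains $Z$. The global statement is the special case where no condition is imposed on the center. In the global case the infimum over all prime divisors equals the infimum over those with $C_X(E)\ni x$, minimized over closed points $x$. So it is enough to prove the local equality for an irreducible $Z$ and then take the infimum over all $Z$.

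\textbf{Step 1: the inequality $\delta_Z\le\delta^{\mathrm{val}}_Z$.} Fix a prime divisor $E$ with $C_X(E)\supseteq Z$. For each $m$, choose a basis of $H^0(X,mL)$ compatible with the filtration induced by $\mathrm{ord}_E$. The resulting $m$-basis type divisor $D$ then satisfies $\mathrm{ord}_E(D)=S_m(L;E)$, by the definition of $S_m$. Since $E$ is centered over a set containing $Z$, we have $\operatorname{lct}_Z(X,\Delta;D)\le A_{X,\Delta}(E)/S_m(L;E)$: for larger $\lambda$, the divisor $E$ has log discrepancy $<0$, and the non-lc locus contains $C_X(E)\supseteq Z$. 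Hence $\delta_{Z,m}\le A/S_m$. By the Remark (Blum--Jonsson), $S_m(L;E)\to S(L;E)$, so taking $\limsup$ and then the infimum over $E$ gives the inequality.

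\textbf{Step 2: the inequality $\delta_Z\ge\delta^{\mathrm{val}}_Z$.} This is the main obstacle. Let $D$ be any $m$-basis type $\mathbb{Q}$-divisor. Its lct along $Z$ is computed by some prime divisor $E$ with $C_X(E)\supseteq Z$; this uses irreducibility of $Z$ together with the fact that the non-lc locus is closed and is a union of centers of divisors with log discrepancy $\le 0$. For this $E$ we have $\operatorname{lct}_Z(D)=A(E)/\mathrm{ord}_E(D)\ge A(E)/S_m(L;E)$. It follows that $\delta_{Z,m}\ge\inf_E A(E)/S_m(L;E)$.

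The difficulty is that the convergence $S_m\to S$ must be uniform in $E$. I would get it from the uniform estimate $S_m(L;E)\le(1+\epsilon_m)S(L;E)$, with $\epsilon_m\to0$ independent of $E$. This is proved through Okounkov bodies: the filtration induced by $\mathrm{ord}_E$ gives a concave transform $G_E$ on $\Delta(L)$. Then $S_m$ is a Riemann-type sum of $G_E$ over lattice points, while $S$ is the integral of $G_E$. A uniform comparison of lattice-point sums of concave functions with their integrals, normalized by $T(L;E)\le C\cdot S(L;E)$ (Fujita's bound $S\ge T/(n+1)$), gives a uniform $\epsilon_m$. Taking $\limsup$ then yields $\delta_Z\ge\delta^{\mathrm{val}}_Z$.

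\textbf{Step 3: conclusion.} Combining Steps 1 and 2 gives the local equality. For the global equality, we use $\operatorname{lct}(D)=\min_x\operatorname{lct}_x(D)$, which is computed by some divisor over $X$, and we rerun the same argument without the center condition. The $\limsup$ is in fact a limit, by the same uniform convergence.
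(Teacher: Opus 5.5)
Your proposal is correct and is essentially the Blum--Jonsson argument that the paper cites without reproducing. The paper gives no proof of its own; you combine the exact identity $\delta_{Z,m}=\inf_{C_X(E)\supseteq Z}A_{X,\Delta}(E)/S_m(L;E)$ with the uniform bound $S_m(L;E)\le(1+\epsilon_m)S(L;E)$, obtained from Okounkov bodies and $T\le (n+1)S$, exactly as they do. Two small imprecisions: $S_m(L;E)$ is only bounded above by the normalized lattice-point sum of $G_E$ (via $\dim\mathcal{F}^{mt}H^0(X,mL)\le\#\{\alpha\in\Gamma_m : G_E(\alpha/m)\ge t\}$), not equal to it, though that is the direction you need; and the non-lc locus is the union of centers of divisors with log discrepancy $<0$, not $\le 0$.
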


The original definition of K-stability is not used in this paper, hence we will take the equivalent condition proved in \cite{FujitaOdakaLowerAlpha}, \cite{BJThresholdsValuationandK-stability} as the definition.
\begin{definition}(K-stability)
    A log Fano pair $(X, \Delta)$ is K-stable (respectively, K-semistable) if~$\delta(X, \Delta)>1$ (respectively, $\geq 1$).
\end{definition}
Together with \autocite[Theorem 1.2]{LXZ}, we have the following equivalent characterization:
\begin{proposition}
    Let $(X, \Delta)$ be a log Fano pair. Then it is K-stable (respectively, K-semistable) if and only if $ \delta_x(X, \Delta)>1$ (respectively, $\geq 1$) for all closed points $x\in X$.
\end{proposition}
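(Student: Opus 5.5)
The plan is to compare $\delta(X,\Delta)$ with $\inf_x\delta_x(X,\Delta)$ over closed points $x\in X$, using the valuative description of Theorem \ref{delta invariant as inf A/E}, and to import from \autocite[Theorem 1.2]{LXZ} the one nontrivial input: the infimum defining $\delta(X,\Delta)$ is a minimum when $\delta(X,\Delta)\le 1$. By Theorem \ref{delta invariant as inf A/E} we have $\delta(X,\Delta)=\inf_E A_{X,\Delta}(E)/S(-K_X-\Delta;E)$ over all prime divisors $E$ over $X$. For a closed point $x$, $\delta_x(X,\Delta)$ is the same infimum restricted to those $E$ with $x\in C_X(E)$.

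The first step is the identity $\delta(X,\Delta)=\inf_{x}\delta_x(X,\Delta)$. Every prime divisor $E$ over $X$ has nonempty center $C_X(E)$, which therefore contains some closed point $x$. So each $E$ enters the infimum defining $\delta_x$ for such an $x$, which gives $\inf_x\delta_x\le\delta$. Conversely, each $\delta_x$ is an infimum over a subset of the divisors used for $\delta$, so $\delta_x\ge\delta$.

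This identity immediately handles two of the four implications. If $\delta_x\ge 1$ for all $x$, then $\delta\ge1$; if $\delta>1$, then $\delta_x\ge\delta>1$ for all $x$. Together with the identity, this also gives the converse for semistability: if $\delta\ge 1$ then $\delta_x\ge\delta\ge1$ for every $x$.

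The remaining implication, and the main obstacle, is that $\delta_x>1$ for all $x$ forces $\delta>1$. Here the strict inequalities can degenerate: $\inf_x\delta_x$ could equal $1$ while no single $\delta_x$ does. To rule this out, argue by contradiction and suppose $\delta(X,\Delta)\le1$. By \autocite[Theorem 1.2]{LXZ}, which says that $\delta(X,\Delta)\le 1$ is computed by some prime divisor $E$ over $X$ (indeed a quasi-monomial valuation, in fact divisorial), there is an $E$ with $A_{X,\Delta}(E)/S(-K_X-\Delta;E)=\delta(X,\Delta)\le1$. Pick any closed point $x\in C_X(E)$. Then $\delta_x(X,\Delta)\le A_{X,\Delta}(E)/S(-K_X-\Delta;E)\le1$, contradicting the hypothesis.

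The only delicate point is checking that the version of the minimizer result in \cite{LXZ} applies in the log Fano pair setting with $\delta\le1$. If only a quasi-monomial minimizer $v$ were available, I would run the same argument using the center of $v$ and the valuative formula for $\delta_x$ over valuations whose center contains $x$.
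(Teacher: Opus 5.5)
Your argument is correct and follows the paper's route. The paper gives no separate proof beyond taking $\delta(X,\Delta)>1$ as the definition and citing \cite{LXZ}; your identity $\delta(X,\Delta)=\inf_x\delta_x(X,\Delta)$, together with the existence of a divisorial (or even just quasi-monomial) minimizer when $\delta(X,\Delta)\le 1<\frac{n+1}{n}$, is exactly what that citation is meant to supply. Your quasi-monomial fallback also works, via the valuative form of $\delta_x$ in Lemma \ref{valuative criterion for multigraded}.
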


We remark that K-polystability is weaker than K-stability while stronger than K-semistability, which does not admit a direct valuative characterization. We state the following theorem for later use.
\begin{theorem}\label{finite automorphism}
    1.(\cite{ReductiveAutomorphism}) If $(X, \Delta)$ is K-polystable, then $\mathrm{Aut}(X, \Delta)$ is reductive.

    2.(\cite{FiniteAutomorphism}) If $(X, \Delta)$ is K-stable, then $|\mathrm{Aut}(X, \Delta)|<\infty $.
        
    3. If $\mathrm{Aut}(X, \Delta)$ is finite, then $(X, \Delta)$ is K-stable if and only if it is K-polystable.
\end{theorem}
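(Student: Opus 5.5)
The statement to address is Theorem \ref{finite automorphism}. Parts 1 and 2 are cited results, so the only new content is part 3: if $\mathrm{Aut}(X,\Delta)$ is finite, then K-stability and K-polystability coincide.

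One direction is immediate from the hierarchy of notions. K-stability implies K-polystability, which is the remark made just before the theorem that K-polystability is weaker than K-stability. This uses no hypothesis on the automorphism group.

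For the converse, assume $(X,\Delta)$ is K-polystable and $\mathrm{Aut}(X,\Delta)$ is finite. I will use the original test-configuration characterization of K-polystability, in which the generalized Futaki invariant satisfies $\mathrm{Fut}(\mathcal{X})\geq 0$ for every normal test configuration, with equality only if $\mathcal{X}$ is a product test configuration. A product test configuration is $X\times\mathbb{A}^1$ with the $\mathbb{G}_m$-action induced by a one-parameter subgroup $\mathbb{G}_m\to\mathrm{Aut}(X,\Delta)$.

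Since the automorphism group is finite, every such homomorphism from the connected group $\mathbb{G}_m$ is trivial. Hence the only product test configuration is the trivial one. So every non-trivial normal test configuration has $\mathrm{Fut}>0$, which is exactly the definition of K-stability. By \cite{FujitaOdakaLowerAlpha} and \cite{BJThresholdsValuationandK-stability}, this is equivalent to $\delta(X,\Delta)>1$, the definition adopted in the paper.

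The main obstacle is that the paper takes $\delta>1$ as the definition of K-stability. The argument therefore has to leave the valuative framework and rely on the test-configuration definitions together with their established equivalence. The remaining check is the convention that a test configuration which is equivariantly trivial up to the trivial $\mathbb{G}_m$-action counts as trivial. A purely valuative route, showing that a K-polystable pair with finite automorphisms has no divisor with $\beta=0$, would need the theory of special degenerations and \cite{LXZ}, so I would avoid it and simply cite the standard equivalence.
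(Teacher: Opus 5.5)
Your argument for part 3 is correct and is the standard one. The paper gives no proof of this part: it states it as a known fact, and later uses only the implication ``K-polystable with finite $\mathrm{Aut}$ $\Rightarrow$ K-stable'', for quartic del Pezzo surfaces.

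One correction concerns the final step, from ``$\mathrm{Fut}>0$ for every non-trivial normal test configuration'' to $\delta(X,\Delta)>1$. This does not follow from \cite{FujitaOdakaLowerAlpha} and \cite{BJThresholdsValuationandK-stability} alone, because those results identify $\delta>1$ with \emph{uniform} K-stability. The passage from K-stability to uniform K-stability is exactly the theorem of \cite{LXZ}.

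So your test-configuration route does not actually avoid \cite{LXZ}; that input is hidden inside the ``standard equivalence'' you invoke. The valuative route you set aside would need it for the same reason, namely to pass from $\beta(E)>0$ for every divisor $E$ to $\inf_E A_{X,\Delta}(E)/S(-K_X-\Delta;E)>1$.

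With that citation made explicit, the argument is complete. The other direction, $\delta>1\Rightarrow$ K-polystable, needs only the earlier results together with the fact that the trivial test configuration is of product type.
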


\subsection{The Abban--Zhuang method}\label{abban-zhuang method}
In this subsection, we present a strong tool for estimating a lower bound of the local stability threshold. The theory is well-built in \cite{AZ1}. For the paper to be self-contained, in Theorem \ref{AZ Admissible Flag} we state and prove a simplified version that is sufficient for the proof of Theorem A. Its formulation allows one to consider a broader range of filtrations that may be of independent interest. 
   
\begin{definition}\label{Multigraded Linear Series}(Multigraded linear series)
    Let $X$ be an $n$-dimensional variety. Let $L_1, \dots, L_r$ be $\bb{Q}$-Cartier $\Q$-divisors on $X$. An \textit{$\bb{N}^r$-graded linear series $W_{\vec{\bullet}}$ on $X$ associated to $L_i$'s} consists of finite-dimensional subspaces $$W_{\vec{a}}\subseteq H^0(X,\mathcal{O}_X(\vec{a}\cdot \vec{L}))$$ for each $\vec{a}\in \bb{N}^r$ (where $\vec{a}\cdot \vec{L}=a_1L_1+\dots +a_rL_r$) such that $W_{\vec{0}}=\bb{C}$ and $W_{\vec{a}_1}\cdot W_{\vec{a}_2}\subseteq W_{\vec{a}_1+\vec{a}_2}$ for all~$\vec{a}_1, \vec{a}_2\in \bb{N}^r$.

    The support $\mathrm{Supp}(W_{\vec{\bullet}})\subseteq \bb{R}^r$ of $W_{\vec{\bullet}}$ is defined as the closed convex cone spanned by $\{\vec{a}\in \bb{N}^r\mid W_{\vec{a}}\neq 0\}.$ We say $W_{\vec{\bullet}}$ \textit{has bounded support} if the intersection $\Delta_{\text{Supp}}=\Delta_{\text{Supp}}(W_{\vec{\bullet}}):=\mathrm{Supp}(W_{\vec{\bullet}})\cap (\{1\}\times \bb{R}^{r-1})$ is a bounded set in $\bb{R}^{r-1}$.

    When $W_{\vec{\bullet}}$ has bounded support, we set $h^0(W_{m,\vec{\bullet}}):=\sum_{\vec{a}\in\bb{N}^{r-1}}\dim W_{m,\vec{a}}$ for all $m\in \bb{N}$, and define the \textit{volume} of $W_{\vec{\bullet}}$ as $$\operatorname{vol}(W_{\vec{\bullet}}):=\limsup_{m\rightarrow\infty}\frac{h^0(W_{m,\vec{\bullet}})}{m^{n+r-1}/(n+r-1)!}.$$

    We say that $W_{\vec{\bullet}}$ \textit{contains an ample series} if:

    1. The interior of $\mathrm{Supp}(W_{\vec{\bullet}})$ is non-empty;

    2. for any $\vec{a}\in \mathrm{int}(\mathrm{Supp}(W_{\vec{\bullet}}))\cap \bb{N}^r, W_{k\vec{a}}\neq 0$ for all $k\gg 0$;

    3. there exists some $\vec{a}_0\in \mathrm{int}(\mathrm{Supp}(W_{\vec{\bullet}}))\cap \bb{N}^r$ and a decomposition $\vec{a}_0\cdot \vec{L}= A + E$ with $A$ an ample $\bb{Q}$-Cartier $\Q$-divisor and $E$ an effective $\Q$-divisor such that $H^0(X,mA)\subseteq W_{m\vec{a}_0}$ for all sufficiently divisible $m$.
\end{definition}
    
\begin{definition}\label{filtraiton on multigraded linear series}\autocite[Definition 2.10]{AZ1}
    Let $W_{\vec{\bullet}}$ be an $\bb{N}^r$-graded linear series. A filtration $\mathcal{F}$ on $W_{\vec{\bullet}}$ (indexed by a totally ordered abelian monoid  $\Lambda$) is given by a filtration on each $W_{\vec{a}}$ such that $$\mathcal{F}^{\lambda_1}W_{\vec{a}_1} \cdot \mathcal{F}^{\lambda_2}W_{\vec{a}_2}\subseteq \mathcal{F}^{\lambda_1+\lambda_2}W_{\vec{a}_1+\vec{a}_2}$$ for all $\lambda_i\in \Lambda$ and all $\vec{a}_i\in \bb{N}^r$.
\end{definition}

\begin{definition}\label{basis type divisor}\autocite[Definition 2.11]{AZ1}
    Let $W_{\vec{\bullet}}$ be an $\bb{N}^r$-graded linear series with bounded support. For any $m\in \bb{N}$, let $N_m=h^0(W_{m,\vec{\bullet}})$. We say that $D$ is an \textit{$m$-basis type divisor (resp. $\Q$-divisor) of $W_{\vec{\bullet}}$} if~$N_m\neq 0$ and there exist basis type divisors $D_{\vec{a}}$ of $W_{m,\vec{a}}$ for each $\vec{a}\in \bb{N}^{r-1}$ with $W_{m,\vec{a}}\neq 0$ such that 
    $$D=\sum\limits_{\vec{a}\in\bb{N}^{r-1}} D_{\vec{a}} \quad\left(\text{resp. }D=\frac{1}{mN_m}\sum\limits_{\vec{a}\in\bb{N}^{r-1}} D_{\vec{a}} \right).$$
    Let $\mathcal{F}$ be a filtration on $W_{\vec{\bullet}}$, we say $D$ is \textit{compatible} with $\mathcal{F}$ if all $D_{\vec{a}}$ are compatible with $\mathcal{F}$.
\end{definition}

\begin{example}\label{refinement}
    Let $X$ be a variety and $L_1, \dots, L_r$ be $\Q$-Cartier $\Q$-divisors on $X$. Let $V_{\vec{\bullet}}$ be an $\bb{N}^{r}$-graded linear series associated to $\vec{L}=(L_1, \dots, L_r)$. Let $Y\subset X$ be a prime Cartier divisor, then $Y|_Y\in \operatorname{CaCl}(Y)$ is well-defined. We fix a representative of it and also denote it as $Y|_Y$. 
         
    Under the canonical embedding $H^0(X,\vec{a}\cdot \vec{L}-jY)\xhookrightarrow{} H^0(X,\vec{a}\cdot \vec{L})$, we set $$V_{\vec{a}}(-jY):=V_{\vec{a}}\cap H^0(X,\vec{a}\cdot \vec{L}-jY)\subset H^0(X,\vec{a}\cdot \vec{L}).$$ 
    Then we can consider the $\bb{N}^{r+1}$-graded linear series $W_{\vec{\bullet}}$ on $Y$ given by $$W_{\vec{a},j}=\operatorname{Im}\left(V_{\vec{a}}\left(-jY\right)\rightarrow  H^0(X,\vec{a}\cdot \vec{L}-jY)\rightarrow H^0(Y, (\vec{a}\cdot \vec{L}-jY)\vert_Y )\right).$$ We call $W_{\vec{\bullet}}$ the \textit{refinement} of $V_{\vec{\bullet}}$ by $Y$. It is a multi-graded linear series associated to $L_1|_Y, \dots, L_r|_Y$ and~$-Y|_Y$.

    On the other hand, the divisor $Y$ induces an $\bb{R}$-filtration $\mathcal{F}_Y$ on $V_{\vec{\bullet}}$, given by $$\mathcal{F}_Y^{\lambda}V_{\vec{a}}:=\{s\in V_{\vec{a}}\mid \operatorname{ord}_Y(s)\geq \lambda\}.$$ We call $\mathcal{F}_Y$ the filtration on $V_{\vec{\bullet}}$ induced by $Y$. Note for any $j\in\bb{N}$, we have $W_{\vec{a},j}=\mathcal{F}_Y^jV_{\vec{a}}/ \mathcal{F}_Y^{j+1}V_{\vec{a}}$.

    Let $$Y_{\bullet}: X=Y_0\supset Y_1\supset \dots \supset Y_{\ell}$$ be an \textit{admissible flag} of length $\ell$ on $X$, i.e.\ each $Y_i$ is an irreducible codimension one subvariety in $Y_{i-1}$ that is smooth at the generic point of $Y_{\ell}$. If moreover each $Y_{i+1}$ is a Cartier divisor on $Y_i$, we can inductively construct a $\bb{N}^{r+\ell}$-graded linear series $W_{\vec{\bullet}}^{(\ell)}$ on $Y_{\ell}$.  We call it the \textit{refinement} of $V_{\bullet}$ by $Y_{\bullet}$. Note that all~$W_{\vec{\bullet}}^{(\ell)}$'s have bounded support and contains an ample series if $V_{\vec{\bullet}}$ does, see \autocite[Example 2.6]{AZ1}.

    Similarly, the admissible flag $Y_{\bullet}$ induces a $\bb{R}^{\ell}$-filtration $\mathcal{F}_{Y_{\bullet}}$ on $V_{\vec{\bullet}}$, characterized by 
    $$W^{(\ell)}_{\vec{a},\vec{b}}=\mathcal{F}_{Y_{\bullet}}^{\vec{b}}V_{\vec{a}}/ \bigcup_{\vec{\lambda}>\vec{b}}\mathcal{F}_{Y_{\bullet}}^{\vec{\lambda}}V_{\vec{a}}.$$
    Here we use the lexicographic order. We call $\mathcal{F}_{Y_{\bullet}}$ the filtration on $V_{\vec{\bullet}}$ induced by $Y_{\vec{\bullet}}$.

    We have the monoid  $$\Gamma(W_{\vec{\bullet}})=\Gamma_{Y_{\bullet}}(W_{\vec{\bullet}}):=\{\vec{a}\mid W_{\vec{a}}\neq 0\}\subset \bb{N}^{r+\ell}.$$
    Recall that for every monoid $\Gamma\subset \bb{N}^{N+1}$, we can associate it to a convex body $\Delta(\Gamma)\subset \bb{R}^N$ by setting 
    $$\begin{array}{c}
        \Sigma(\Gamma)=\text{the\ smallest\ closed\ convex\ cone\ containing } \Gamma\subset \bb{R}^{N+1},\\
        \text{and\ }\Delta(\Gamma)=\Sigma(\Gamma) \cap (\{1\}\times \bb{R}^N).
    \end{array}$$ 
    Let $\Gamma_m=\Gamma(W_{\vec{\bullet}})\cap \left( \{m\}\times \bb{N}^{r+\ell-1}  \right)$. When $W_{\vec{\bullet}}$ contains an ample series and has bounded support, the atomic measure $$\rho_m=\frac{1}{m^{r-1+\ell}}\sum_{\vec{a}\in \Gamma_m}\delta_{m^{-1}\vec{a}}$$ on $\Delta(W_{\vec{\bullet}})$ converges weakly to the Lebesgue measure on it as $m\rightarrow\infty$, by \autocite[Th\'eor\`eme 1.12]{boucksom2014corps}.

    If moreover the length of the flag is $n=\dim X$, the associated $\Delta(W_{\vec{\bullet}}):=\Delta(\Gamma(W_{\vec{\bullet}}))\subset \bb{R}^{r+n-1}$ is called the \textit{Okounkov body} of $W_{\vec{\bullet}}$ (see \cite{lazarsfeld2009convex} for details). We have $\operatorname{vol}(W_{\vec{\bullet}})=(n+r-1)!\cdot \operatorname{vol}(\Delta(W_{\vec{\bullet}}))$. 
\end{example}
    
Now, we generalize the notion of $S$-invariant and $\delta$-invariant (see Definitions \ref{S_m}, \ref{volume and S-invariant}, and \ref{delta invariant}) and Theorem \ref{delta invariant as inf A/E} to the multi-graded linear series setting.
\begin{definition}\autocite[Definition 2.12]{AZ1}
    Let $(X, \Delta)$ be a klt pair, and let $Z$ be a closed subset of $X$. Let $W_{\vec{\bullet}}$ be an $\bb{N}^{r}$-graded linear series on $X$ with bounded support, let $\mathcal{F}$ be a filtration on $W_{\vec{\bullet}}$, and let $v$ be a non-trivial valuation on $X$ with $A_{X, \Delta}(v)<\infty$. We define 
    $$S_m(W_{\vec{\bullet}},\mathcal{F};v):=\sup\limits_{D}v(D),$$ and $$S(W_{\vec{\bullet}},\mathcal{F};v)=\limsup\limits_{m\rightarrow\infty}S_m(W_{\vec{\bullet}},\mathcal{F};v),$$
    where the supremum runs over all $m$-basis type divisor $\Q$-divisor $D$ of $W_{\vec{\bullet}}$ compatible with $\mathcal{F}$. When the valuation $v=\mathrm{ord}_E$ is induced by a divisor, we write $S_m(W_{\vec{\bullet}},\mathcal{F};E)=S_m(W_{\vec{\bullet}},\mathcal{F};\mathrm{ord}_E)$. We omit $\mathcal{F}$ in the notation when it is the trivial filtration (i.e. $\mathcal{F}^{\lambda}(W_{\vec{a}})=W_{\vec{a}}$ when $\lambda\leq 0$ and $\mathcal{F}^{\lambda}(W_{\vec{a}})=0$ when $\lambda>0$). We also set $$\delta_m(W_{\vec{\bullet}},\mathcal{F})=\inf\limits_{D}\mathrm{lct}(X, \Delta;D);\quad\delta_{Z,m}(W_{\vec{\bullet}},\mathcal{F})=\inf\limits_{D}\mathrm{lct}_Z(X, \Delta;D),$$ where the infimum runs over all $m$-basis type $\Q$-divisors of $W_{\vec{\bullet}}$ that are compatible with $\mathcal{F}$. Similarly we define $$\delta(W_{\vec{\bullet}},\mathcal{F})=\limsup\limits_{m\rightarrow\infty}\delta_m(W_{\vec{\bullet}},\mathcal{F});\quad\delta_Z(W_{\vec{\bullet}},\mathcal{F})=\limsup\limits_{m\rightarrow\infty}\delta_{Z,m}(W_{\vec{\bullet}},\mathcal{F}).$$ 
\end{definition}

\begin{remark}
    Let $L$ be a big line bundle on $X$ and $V_{\bullet}$ be the complete linear series associated to $L$, then the notions are compatible with those in subsection \ref{basic notions}. Explicitly, we have $\operatorname{vol}(V_{\bullet})=\operatorname{vol}(L), S(V_{\bullet};v)=S(L;v),  \delta(V_{\bullet})=\delta(L),$ and $\delta_Z(V_{\bullet})=\delta_Z(L)$ for any valuation $v$ on $X$ and any closed subset $Z\subset X$.
\end{remark}

\begin{lemma}\label{valuative criterion for multigraded} \autocite[Lemma 2.9, Proposition 3.1]{AZ1}
    With the same setting as previous definition, if $W_{\vec{\bullet}}$ contains an ample series, then $$S(W_{\vec{\bullet}},\mathcal{F};v)=\lim\limits_{m\rightarrow\infty}S_m(W_{\vec{\bullet}},\mathcal{F};v),$$ for any valuation $v$ with linear growth (i.e. there exists an ample line bundle $L$ and a constant $T>0$ such that $v(D)<mT$ for any $D\in |mL|$).
        
    If moreover $(X, \Delta)$ is a klt pair and the closed subset $Z\subset X$ is irreducible, then for any filtration $\mathcal{F}$ on $W_{\vec{\bullet}}$ we have 
    \begin{align*}
        \delta(W_{\vec{\bullet}})&=\delta(W_{\vec{\bullet}}, \mathcal{F})=\inf\limits_{E}\frac{A_{X, \Delta}(E)}{S(W_{\vec{\bullet}};E)}=\inf\limits_{v}\frac{A_{X, \Delta}(v)}{S(W_{\vec{\bullet}};v)},\\
        \delta_Z(W_{\vec{\bullet}})&=\delta_Z(W_{\vec{\bullet}}, \mathcal{F})=\inf\limits_{E,C_X(E)\supseteq Z}\frac{A_{X, \Delta}(E)}{S(W_{\vec{\bullet}};E)}=\inf\limits_{v, C_X(v)\supseteq Z}\frac{A_{X, \Delta}(v)}{S(W_{\vec{\bullet}};v)},
    \end{align*}
    where the first infimum runs over all divisors $E$ over $X$ (resp. all divisors $E$ over $X$ whose center contains~$Z$) and the second infimum runs over all non-trivial valuations (resp. non-trivial valuations whose center contains $Z$) with $A_{X, \Delta}(v)<\infty$.
\end{lemma}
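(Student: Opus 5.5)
The plan is to follow the Blum--Jonsson strategy in \cite{BJThresholdsValuationandK-stability}, adapted to $\mathbb{N}^r$-graded series. The basic tool is the elementary fact that any two filtrations on a finite-dimensional vector space admit a common adapted basis.

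\textbf{Step 1: the first assertion.} Fix $m$ and a valuation $v$.
\begin{itemize}
\item For each $\vec a$, the valuation $v$ induces a filtration $\mathcal{F}_v$ on $W_{m,\vec a}$. Choose a basis of $W_{m,\vec a}$ adapted simultaneously to $\mathcal{F}_v$ and to $\mathcal{F}$.
\item The resulting basis type divisor is compatible with $\mathcal{F}$, and it maximizes $v(D)$. Hence $S_m(W_{\vec\bullet},\mathcal{F};v)=S_m(W_{\vec\bullet};v)$.
\item This common value equals $\frac{1}{mN_m}$ times the sum of the jumping numbers of $\mathcal{F}_v$ on $W_{m,\vec\bullet}$. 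Equivalently,
\[
S_m=\frac{1}{mN_m}\int_0^{\infty}\sum_{\vec a}\dim \mathcal{F}_v^{mt}W_{m,\vec a}\,dt .
\]
\item For each $t$, the subspaces $\mathcal{F}_v^{mt}W_{m,\vec a}$ form a graded linear series $W^{t}_{\vec\bullet}$. Since $W_{\vec\bullet}$ contains an ample series, the Okounkov-body theory of \cite{boucksom2014corps} gives $\frac{1}{N_m}\sum_{\vec a}\dim \mathcal{F}_v^{mt}W_{m,\vec a}\to \operatorname{vol}(W^t_{\vec\bullet})/\operatorname{vol}(W_{\vec\bullet})$ for every $t$ outside the endpoint where $W^t_{\vec\bullet}$ stops being big. (One checks that $W^t_{\vec\bullet}$ still has bounded support and contains an ample series for $t$ below this threshold.)
\item Linear growth of $v$ says the integrand vanishes for $t>T$ uniformly in $m$. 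Dominated convergence then shows $\lim_m S_m$ exists and equals $\frac{1}{\operatorname{vol}(W_{\vec\bullet})}\int_0^T \operatorname{vol}(W^t_{\vec\bullet})\,dt$.
\end{itemize}

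\textbf{Step 2: the formula at finite level.} I first prove, for each fixed $m$,
\[
\delta_{Z,m}(W_{\vec\bullet},\mathcal{F})=\inf_{E,\ C_X(E)\supseteq Z}\frac{A_{X,\Delta}(E)}{S_m(W_{\vec\bullet};E)}.
\]
\begin{itemize}
\item Since $Z$ is irreducible, $\operatorname{lct}_Z(X,\Delta;D)=\inf_E A_{X,\Delta}(E)/\operatorname{ord}_E(D)$ over prime divisors $E$ with $C_X(E)\supseteq Z$. Indeed, $Z\subseteq \operatorname{Nlc}$ exactly when some such $E$ is non-lc; irreducibility lets us test at the generic point of $Z$.
\item Taking the infimum over $D$ gives the inequality ``$\geq$''.
\item For ``$\leq$'', take any such $E$ and use the common adapted basis from Step 1. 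This produces a $D$ compatible with $\mathcal{F}$ with $\operatorname{ord}_E(D)=S_m(E)$.
\end{itemize}
The right-hand side does not involve $\mathcal{F}$, so the equality $\delta_Z(W_{\vec\bullet})=\delta_Z(W_{\vec\bullet},\mathcal{F})$ follows immediately. The same argument with $Z=X$ handles the global $\delta$.

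\textbf{Step 3: passing to the limit.} This is the step I expect to be the main obstacle. One direction is easy: Step 1 gives $\limsup_m \delta_{Z,m}\leq A(E)/S(E)$ for each $E$. The converse needs a \emph{uniform} bound
\[
S_m(W_{\vec\bullet};v)\leq(1+\epsilon_m)\,S(W_{\vec\bullet};v)\quad\text{for all } v,\qquad \epsilon_m\to 0 .
\]
I would try to establish it as in \cite[Cor.~2.10]{BJThresholdsValuationandK-stability}, in three stages:
\begin{enumerate}
\item Use the ample series $H^0(X,mA)\subseteq W_{m\vec a_0}$ to compare $W_{\vec\bullet}$ with the complete series of $A$.
\item Apply an Izumi-type bound $S\geq c\,T$ to control the tail of the integral.
\item Use concavity of $t\mapsto \operatorname{vol}(W^t_{\vec\bullet})^{1/(n+r-1)}$ (Brunn--Minkowski on the Okounkov body) to get the uniform estimate, in the style of Blum--Jonsson's \cite[Lemma 2.2]{BJThresholdsValuationandK-stability}.
\end{enumerate}

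\textbf{Step 4: from divisors to valuations.} Every divisor gives a valuation, so the infimum over valuations is at most the infimum over divisors. For the reverse inequality I approximate a valuation $v$ by quasi-monomial valuations and then by divisorial ones. Along this approximation $A$ is continuous (by \cite{JonssonMustata}) and $S$ is lower semicontinuous. The condition $C_X(v)\supseteq Z$ is preserved, since approximating divisors can be chosen with the same center.
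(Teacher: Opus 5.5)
Your proposal is correct and follows essentially the route of the source the paper relies on. The paper gives no proof of its own, only the citation to \cite{AZ1}, whose argument adapts Blum--Jonsson: a basis adapted simultaneously to $\mathcal{F}$ and $\mathcal{F}_v$ gives $\delta_{Z,m}(W_{\vec{\bullet}},\mathcal{F})=\inf_{E}A_{X,\Delta}(E)/S_m(W_{\vec{\bullet}};E)$ independently of $\mathcal{F}$, and the uniform bound $S_m\leq(1+\epsilon_m)S$, coming from Okounkov bodies and concavity, lets you pass to the limit.

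Two minor points. First, your integral formula for $S_m$ has a spurious factor $1/m$; it should read $\frac{1}{N_m}\int_0^\infty\sum_{\vec a}\dim\mathcal{F}_v^{mt}W_{m,\vec a}\,dt$. Second, Step 4 is not needed: run Step 2 with valuations, using $\operatorname{lct}_Z(X,\Delta;D)=\inf_{C_X(v)\supseteq Z}A_{X,\Delta}(v)/v(D)$, and then apply the same uniform bound (valid for all $v$ with $A_{X,\Delta}(v)<\infty$). This gives the valuative equality directly, without any approximation or semicontinuity of $S$.
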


\begin{remark}\label{volume function and compute S}
    Given a multigraded linear series $W_{\vec{\bullet}}$ with bounded support $\Delta_{\text{Supp}}:=\Delta_{\text{Supp}}(W_{\vec{\bullet}})$, there is a continuous function $$\operatorname{vol}_{W_{\vec{\bullet}}}\colon \operatorname{Int}\left(\Delta_{\text{Supp}}\right)\rightarrow \bb{R}_{\geq{0}}$$ such that for any $\gamma\in \operatorname{Int}\left(\Delta_{\text{Supp}}\right)\cap\bb{Q}^{r-1}$, the value of $\operatorname{vol}_{W_{\vec{\bullet}}}(\gamma)$ equals the volume of the singly-graded linear series 
    $\bigoplus_m W_{m,m\gamma}$. Then $\operatorname{vol}_{W_{\vec{\bullet}}}$ can be extended continuously to $\Delta_{\text{Supp}}$. 

    If there is a filtration $\mathcal{F}$ on $W_{\vec{\bullet}}$, we can define the multigraded linear series $W_{\vec{\bullet}}^t$ by $W_{m,\vec{a}}^t=\mathcal{F}^{mt}W_{m,\vec{a}}$. Then, according to \autocite[Corollary 2.13, Lemma 2.10]{AZ1}, we have $$S(W_{\vec{\bullet}};\mathcal{F})=\frac{1}{\operatorname{vol}\left(W_{\vec{\bullet}}\right)}\int_0^{\infty} \operatorname{vol}(W_{\vec{\bullet}}^t)dt=\frac{1}{n!\operatorname{vol}\left(\Delta\left(W_{\vec{\bullet}}\right)\right)}\int_0^{\infty}\int_{\Delta_{\text{Supp}}^t}\operatorname{vol}_{W_{\vec{\bullet}}^t}(\gamma)d\gamma dt.$$
\end{remark}

\begin{lemma-definition}\autocite[Lemma-Definition 2.14]{AZ1}\label{c_1 and fixed part}
    With the same notation as Definitions \ref{Multigraded Linear Series} and \ref{basis type divisor}, we see that the class of each $m$-basis type divisor of $W_{\vec{\bullet}}$ is independent of the choice of basis. Denote it by $c_1(W_{m,\vec{\bullet}})\in \operatorname{Pic}(X)_{\bb{R}}$. If $W_{\vec{\bullet}}$ contains an ample series, then the limit $$c_1(W_{\vec{\bullet}}):=\lim\limits_{m\rightarrow\infty}\frac{c_1(W_{m,\vec{\bullet}})}{mh^0(W_{m,\vec{\bullet}})}
    $$ exists in $\mathrm{Pic}(X)_{\bb{R}}$. 

    Let $F_{\vec{a}}$ (resp. $M_{\vec{a}}$) be the fixed part (resp. movable part) of the linear system $W_{\vec{a}}$ for every $\vec{a}\in \bb{N}^{r}$ such that $W_{\vec{a}}\neq 0$. For each $m$, let $$F_m(W_{\vec{\bullet}})=\frac{1}{mN_m}\sum\limits_{\vec{a}\in\bb{N}^{r-1}}\dim (W_{m,\vec{a}})\cdot F_{m,\vec{a}}.$$ Then $\lim\limits_{m\rightarrow\infty}\mathrm{ord}_D F_m(W_{\vec{\bullet}})$ exists for every prime divisor $D\subset X$.
\end{lemma-definition}

Note that although $\dim_{\bb{R}}\operatorname{Pic}(X)_{\bb{R}}$ is in general infinite, the class $c_1(W_{m,\vec{\bullet}})\in \operatorname{Pic}(X)_{\bb{R}}$ always lies in a finite dimensional vector space spanned by $\{[L_1], \dots, [L_{r}]\}$ for all $m\in \bb{N}$. Therefore the first limit in Definition \ref{c_1 and fixed part} is the limit in a finite-dimensional space $\langle[L_1], \dots , [L_{r}]\rangle\subseteq \operatorname{Pic}(X)_{\bb{R}}$.

\begin{definition}\label{almost complete}\autocite[Definition 2.16]{AZ1}
    Let $L$ be a big line bundle on a variety $X$, and $W_{\vec{\bullet}}$ be an~$\bb{N}^{r}$-graded linear series. We say $W_{\vec{\bullet}}$ is \textit{almost complete} (with respect to $L$) if:

    (1) there are at most finitely many prime divisors $D\subset X$ with $\lim\limits_{m\rightarrow\infty}\mathrm{ord}_D F_m(W_{\vec{\bullet}})>0$. Then we denote $$F(W_{\vec{\bullet}})=\sum\limits_{D} \left(\lim_{m\rightarrow\infty}\mathrm{ord}_D F_m\left(W_{\vec{\bullet}}\right)\right)\cdot D.$$

    (2) for every $\gamma\in\operatorname{Int}(\Delta_{\text{Supp}})\cap \Q^{r-1}$ and all sufficiently divisible integers $m$, we have $\rvert M_{m,m\gamma}\lvert \subseteq \rvert L_{m,\gamma}\rvert$ for some $L_{m,\gamma}\equiv \ell_{m,\gamma}L$ and some $\ell_{m,\gamma}\in\bb{N}$ such that $$\frac{\dim(W_{m,m\gamma})}{h^0(X,\ell_{m,\gamma}L)}=\frac{\dim(M_{m,m\gamma})}{h^0(X,\ell_{m,\gamma}L)}\rightarrow 1$$ as $m\rightarrow\infty$.
\end{definition}

\begin{example}\label{ample is almost complete}\autocite[Example 2.12]{AZ1}
    Let $L$ be an ample line bundle on $X$, and $H\in |L|$ be an integral divisor. Let $V_{\bullet}$ be the complete linear system of $rL$ for some $r\in \bb{Z}_{>0}$, and $W_{\bullet, \bullet}$ be the refinement of~$V_{\bullet}$ by $H$, then the $\bb{N}^2$-graded linear series $W_{\bullet, \bullet}$ is almost complete and $F(W_{\bullet, \bullet})=0$. More generally, if~$Y_{\bullet}:Y_0\supset Y_1\supset \dots \supset Y_{\ell}$ is an admissible flag on $X$ such that each $Y_i$ is Cartier on $Y_{i-1}$ and $Y_i\sim m_iL|_{Y_{i-1}}$ for some $m_i\in\bb{Z}_{>0}$, then the $\bb{N}^{\ell+1}$-graded linear series refined from $V_{\bullet}$ is almost complete and $F(W_{\vec{\bullet}})=0$.
\end{example}
    
\begin{lemma}\label{induction formula}\autocite[Lemma 2.13]{AZ1}
    Let $L$ be a big line bundle on $X$ and $W_{\vec{\bullet}}$ be an $\bb{N}^{r+1}$-graded linear series. Assume $W_{\vec{\bullet}}$ has bounded support, contains an ample series and is almost complete with respect to~$L$. Then 

    1. $F(W_{\vec{\bullet}})$ is $\bb{R}$-Cartier;

    2. there exists a constant $\mu=\mu(X,L,W_{\vec{\bullet}})$ such that $c_1(M_{\vec{\bullet}})=\mu L$ in $N^1(X)_{\bb{R}}$ and $$S(W_{\vec{\bullet}};v)=\mu\cdot S(L;v)+v(F(W_{\vec{\bullet}}))$$ for all non-trivial valuations of linear growth.
\end{lemma}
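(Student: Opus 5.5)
We decompose an $m$-basis type divisor into its fixed part and a basis type divisor of the movable part, and then pass to the limit $m\to\infty$.

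\textbf{Step 1: $F$ is $\bb{R}$-Cartier.} By Lemma-Definition \ref{c_1 and fixed part}, $c_1(W_{m,\vec\bullet})/(mN_m)$ converges in the finite-dimensional span of the $[L_i]$, so it is $\bb{R}$-Cartier. Almost completeness (1) gives $F_m(W_{\vec\bullet})\to F(W_{\vec\bullet})$ coefficientwise on a finite set of prime divisors. Write $M_m:=\frac{1}{mN_m}\sum_{\vec a}\dim(W_{m,\vec a})M_{m,\vec a}$, taking the classes $[M_{m,\vec a}]=\ell_{m,\gamma}[L]$ given by almost completeness (2). Then $[M_m]=\mu_m[L]$ with $\mu_m=\frac{1}{mN_m}\sum\dim(W_{m,\vec a})\ell_{m,\vec a}$. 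This $\mu_m$ is bounded, since bounded support and $h^0(\ell L)\sim \ell^n\mathrm{vol}(L)/n!$ bound $\ell_{m,\vec a}/m$. Passing to a subsequence, $\mu_m\to\mu$. Hence $F=\lim(c_1/(mN_m)-\mu_m L)$ as classes. The Weil divisor $F$ is supported on finitely many primes, and its class equals a limit of $\bb{R}$-Cartier classes. To make this rigorous on a possibly non-$\bb{Q}$-factorial $X$, I would argue as follows: each $F_m$ is $\bb{Q}$-Cartier, being the difference between a basis type divisor and a movable one, both Cartier. The $F_m$ lie in the finite-dimensional space spanned by the finitely many primes $D$ together with a vanishing tail. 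So $F$ is a limit inside the closed subspace of $\bb{R}$-Cartier combinations of these primes, hence $\bb{R}$-Cartier. I expect the main obstacle to be handling the tail primes with coefficients tending to $0$: the set of primes could vary with $m$. I would argue that the primes appearing are fixed components of $W_{m,\vec a}$, and that almost completeness forces them into the finite list for $m$ large, up to a contribution whose valuation tends to $0$.

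\textbf{Step 2: the formula for $S$.} For each $\vec a$, an $m$-basis type divisor of $W_{m,\vec a}$ compatible with $v$ equals $\dim(W_{m,\vec a})F_{m,\vec a}$ plus a basis type divisor of the movable system $M_{m,\vec a}\subseteq|L_{m,\gamma}|$. Hence
\[
S_m(W_{\vec\bullet};v)=v(F_m)+\frac{1}{mN_m}\sum_{\vec a}\dim(W_{m,\vec a})\,\ell_{m,\vec a}\,S_{\ell_{m,\vec a}}(M_{m,\vec a};v),
\]
where $S$ of a subsystem of $|\ell L|$ is normalized per unit of $\ell L$. Since $\dim M/h^0(\ell L)\to1$, the filtration by $v$ on the subspace $M$ differs from the one on $H^0(\ell L)$ only on a vanishing proportion of dimensions, each costing at most $\ell T$ by linear growth. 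Therefore $S_\ell(M;v)=S_\ell(L;v)+o(1)$ uniformly; this is the key estimate. Because $S_\ell(L;v)\to S(L;v)$ by the remark after Definition \ref{volume and S-invariant}, the sum converges to $\mu S(L;v)$. Finally, $v(F_m)\to v(F)$ by finiteness of the supports and linearity of $v$, which gives the formula and shows in particular that $\mu$ does not depend on the subsequence. The identification $c_1(M_{\vec\bullet})=\mu L$ is exactly the limit of $[M_m]$ computed in Step 1.
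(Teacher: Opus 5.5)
The gap is in how you globalize your core estimate. The estimate itself is right: after splitting off the fixed part, a movable part of codimension $o(m^n)$ in a complete linear system has $v$-jumping numbers agreeing with those of the complete system except on a negligible set of dimensions, each bounded by linear growth. The problem is that condition~(2) of Definition~\ref{almost complete} concerns one rational direction $\gamma$ at a time, for $m$ sufficiently divisible \emph{depending on $\gamma$}, with no uniformity in $\gamma$. At a fixed level $m$ a lattice point $\vec a$ need not be covered at all. So $\ell_{m,\vec a}$, the class $[M_m]=\mu_m[L]$, your displayed decomposition of $S_m(W_{\vec\bullet};v)$, and the ``uniform'' estimate $S_\ell(M;v)=S_\ell(L;v)+o(1)$ are not available from the hypotheses, and the full sum over $\vec a$ at level $m$ cannot be controlled directly.

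What is missing is the reduction to slices. By Remark~\ref{volume function and compute S}, $S(W_{\vec\bullet};v)$ is an integral over $t$ and $\gamma$ of the slice volumes $\operatorname{vol}_{W^t_{\vec\bullet}}(\gamma)$. The Okounkov-body and weak-convergence argument behind that formula absorbs all the non-uniformity. So it suffices to compute $\operatorname{vol}_{W^t_{\vec\bullet}}(\gamma)$ for each fixed rational interior $\gamma$ along sufficiently divisible $m$. There your estimate applies: $\mathcal F_v^{mt}W_{m,m\gamma}\cong\mathcal F_v^{mt-v(F_{m,m\gamma})}M_{m,m\gamma}$, and comparing with $H^0(L_{m,\gamma})$ gives $\int_0^\infty\operatorname{vol}_{W^t_{\vec\bullet}}(\gamma)\,dt=\operatorname{vol}_{W_{\vec\bullet}}(\gamma)\bigl(v(F_\gamma)+\mu_\gamma S(L;v)\bigr)$, with $\mu_\gamma=\lim_m\ell_{m,\gamma}/m$ and $F_\gamma=\lim_m F_{m,m\gamma}/m$. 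Integrating over $\gamma$ yields the formula, with $\mu$ the $\operatorname{vol}_{W_{\vec\bullet}}$-weighted average of the $\mu_\gamma$. This slice-wise comparison is what the proof in \cite{AZ1} does, and it is exactly the part the paper reuses in the proof of Lemma~\ref{GeneralizedLocalDeltaOnCurve}. Note also that $M_{m,m\gamma}$ lies in $H^0(L_{m,\gamma})$ with $L_{m,\gamma}$ only numerically equivalent to $\ell_{m,\gamma}L$. So the comparison is with these varying bundles, not with $H^0(\ell L)$ and the fixed-$L$ limit $S_\ell(L;v)\to S(L;v)$ you invoke.

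Step~1 has the same defect, and the tail issue you flag is a genuine obstruction rather than a technicality. $F_m(W_{\vec\bullet})$ contains $F_{m,\vec a}$ for uncovered $\vec a$, so it need not be $\bb{Q}$-Cartier. Coefficientwise convergence also does not give convergence of classes: $\frac1m$ times a sum of $m$ distinct members of a fixed linear system has coefficients tending to $0$ but constant class. Condition~(1) of almost completeness does not push the primes into the finite list.

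One way to close this is again slice by slice:
\begin{itemize}
\item Subadditivity $F_{\vec a}+F_{\vec a'}\geq F_{\vec a+\vec a'}$ gives $\operatorname{Supp}F_{km_0,km_0\gamma}\subseteq\operatorname{Supp}F_{m_0,m_0\gamma}$ for all $k$. This is a fixed finite set, on which the $\bb{R}$-Cartier divisors form a linear, hence closed, subspace.
\item Since $F_{m,m\gamma}$ is $\bb{Q}$-Cartier for sufficiently divisible $m$ by condition~(2), $F_\gamma$ is $\bb{R}$-Cartier with $F_\gamma\equiv(1,\gamma)\cdot\vec L-\mu_\gamma L$.
\item As in Lemma~\ref{DegreeOfFixed} with $\operatorname{ord}_D$ in place of $\deg$, $\lim_m\operatorname{ord}_DF_m(W_{\vec\bullet})$ is the $\operatorname{vol}_{W_{\vec\bullet}}$-weighted average of the nonnegative convex functions $\gamma\mapsto\operatorname{ord}_DF_\gamma$.
\end{itemize}
This forces every $F_\gamma$ to be supported on the finitely many primes of $F(W_{\vec\bullet})$. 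It exhibits $F(W_{\vec\bullet})$ as an average of elements of one closed finite-dimensional space of $\bb{R}$-Cartier divisors. That yields part~1 together with $c_1(W_{\vec\bullet})-F(W_{\vec\bullet})\equiv\mu L$.
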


Now we are ready to state the main tool that we will use later to prove Theorem \ref{main theorem}.

Let $X$ be a variety with at worst klt singularities, and $V_{\vec{\bullet}}$ an $\bb{N}^{r}$-graded linear series on $X$ containing an ample series and with bounded support. Let $Y\subset X$ be a klt prime Cartier divisor on $X$, and $W_{\vec{\bullet}}$ be the refinement of $V_{\vec{\bullet}}$ by $Y$.

Let $\mathcal{F}$ be an arbitrary filtration on $W_{\vec{\bullet}}$, indexed by some totally ordered abelian monoid $\Lambda$. We can lift it to a filtration $\mathcal{F}_1$ on $V_{\vec{\bullet}}$ indexed by $\bb{R}\oplus \Lambda$ with lexicographic order, defined by 
\begin{equation}\label{eq: lift of filtration}
    \mathcal{F}_1^{(t,\lambda)} V_{\vec{a}}=\{s\in V_{\vec{a}}\mid \text{either\ }\operatorname{ord}_{Y}(s)>t, \text{\ or\ } \operatorname{ord}_{Y}(s)=t \text{\ and\ } (s-tY)\vert_{Y}\in\mathcal{F}^{\lambda} W_{\vec{a},t} \},
\end{equation}
for any $t\in \bb{R}, \lambda\in\Lambda$, and any $\vec{a}\in\bb{N}^{r}$ such that $V_{\vec{a}}\neq 0$.

It is worth noting that here, and in the following theorem, we consider arbitrary filtrations, this generalizes the setting in the original \cite{AZ1}, which only considers filtrations induced by admissible flags.
\begin{lemma}\label{lemma: induction step of AZ}
    Let $Z\subset X$ be a (possibly reducible) closed subset of $X$ that intersects $Y$, we have 
    \begin{equation}\label{eq: induction step of AZ}
        \delta_Z(X;V_{\vec{\bullet}}, \mathcal{F}_1)\geq \min \left\{\frac{1}{S(V_{\vec{\bullet}};Y)},\ \delta_{Z\cap Y}(Y;W_{\vec{\bullet}},\mathcal{F}) \right\}.
    \end{equation}
\end{lemma}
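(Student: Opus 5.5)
We argue directly at the level of $m$-basis type divisors; this is the basic inversion-of-adjunction step of the Abban--Zhuang method. Fix $\lambda$ strictly smaller than the right-hand side of \eqref{eq: induction step of AZ}. Since the right-hand side is a minimum of two quantities, for all sufficiently large $m$ (along a subsequence realizing the $\limsup$ in the definition of $\delta_{Z\cap Y}(Y;W_{\vec{\bullet}},\mathcal{F})$) we may assume two things. First, $\lambda S_m(V_{\vec{\bullet}};Y)<1$; here we use $S_m\to S$ from Lemma \ref{valuative criterion for multigraded}, since $\operatorname{ord}_Y$ has linear growth. Second, every $m$-basis type $\Q$-divisor $D'$ of $W_{\vec{\bullet}}$ compatible with $\mathcal{F}$ satisfies $\operatorname{lct}_{Z\cap Y}(Y;D')>\lambda$. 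It then suffices to show that $\operatorname{lct}_Z(X;D)\geq\lambda$ for every $m$-basis type $\Q$-divisor $D$ of $V_{\vec{\bullet}}$ compatible with $\mathcal{F}_1$.

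\textbf{Step 1: decompose $D$.} Let $D$ be compatible with $\mathcal{F}_1$. The filtration $\mathcal{F}_1$ refines $\mathcal{F}_Y$, so compatibility implies that the basis $\{s_i\}$ of each $V_{m,\vec{a}}$ is adapted to the $\operatorname{ord}_Y$-filtration. For each $t$, the sections with $\operatorname{ord}_Y=t$ then restrict, after dividing by the $t$-th power of a local equation of $Y$, to a basis of $W_{m,\vec{a},t}=\mathcal{F}_Y^tV_{m,\vec{a}}/\mathcal{F}_Y^{t+1}V_{m,\vec{a}}$. By the definition \eqref{eq: lift of filtration}, this basis is moreover compatible with $\mathcal{F}$. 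Summing over $\vec a$ and $t$ and normalizing, we obtain
\[
D=S_m'\,Y+\Gamma,
\]
where $S_m'=\operatorname{ord}_Y(D)\leq S_m(V_{\vec{\bullet}};Y)$ and $Y\not\subseteq\operatorname{Supp}\Gamma$. The restriction $\Gamma|_Y$ is exactly an $m$-basis type $\Q$-divisor $D'$ of $W_{\vec{\bullet}}$ compatible with $\mathcal{F}$. The normalizations agree because $h^0(W_{m,\vec\bullet})=h^0(V_{m,\vec\bullet})$, as the graded pieces of the $\operatorname{ord}_Y$-filtration add up to the whole space. This bookkeeping is where I expect the main care to be needed. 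One must check that $\mathcal{F}_1$-compatibility really forces a basis that is simultaneously $\mathcal{F}_Y$-adapted and $\mathcal{F}$-adapted on each graded piece. Since the lexicographic filtration is a refinement, one can choose a basis compatible with the induced filtration on each quotient and lift it. It is also worth noting that $W_{\vec\bullet}$ is graded by the extra index $j$, so the definition of basis type divisor for $W_{\vec\bullet}$ sums over that index as well.

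\textbf{Step 2: inversion of adjunction.} Since $\lambda S'_m<1$, we have
\[
\lambda D\leq Y+\lambda\Gamma
\]
in a neighbourhood of the generic points of $Z\cap Y$. By Step 1, the pair $(Y,\lambda\Gamma|_Y)=(Y,\lambda D')$ is lc at some point of each component of $Z\cap Y$; in fact it is klt there, since $\operatorname{lct}_{Z\cap Y}>\lambda$. The variety $X$ is klt and $Y$ is a klt Cartier divisor, so $Y$ is normal. Inversion of adjunction (Kawakita's version for lc, or the klt/plt version) then gives that $(X,Y+\lambda\Gamma)$ is plt, hence lc, near those points. Therefore $(X,\lambda D)$ is lc near a point of $Z\cap Y\subseteq Z$, so $Z\not\subseteq\operatorname{Nlc}(X,\lambda D)$.

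\textbf{Step 3: reducibility of $Z$.} When $Z$ is reducible, $\operatorname{lct}_Z$ only requires $Z\not\subseteq\operatorname{Nlc}$, and Step 2 produces a point of $Z\cap Y$, which is nonempty by hypothesis, outside the non-lc locus. Hence no irreducibility of $Z$ is needed. We conclude that $\operatorname{lct}_Z(X;D)\geq\lambda$ for all compatible $D$ and all large $m$, so $\delta_Z(X;V_{\vec{\bullet}},\mathcal{F}_1)\geq\lambda$. Letting $\lambda$ tend to the right-hand side gives \eqref{eq: induction step of AZ}.
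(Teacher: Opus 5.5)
Your proposal is correct and follows essentially the same argument as the paper: decompose a compatible $D$ as $S_m(V_{\vec\bullet};Y)\,Y+\Gamma$ with $\Gamma|_Y$ a compatible basis type divisor of $W_{\vec\bullet}$, apply inversion of adjunction at a point of $Z\cap Y$, and pass to the limit using $S_m\to S$; fixing $\lambda$ strictly below the bound only lets you use the klt/plt form of inversion of adjunction instead of the lc one. One small misstatement: $\operatorname{lct}_{Z\cap Y}(Y;D')>\lambda$ gives only a single point of $Z\cap Y$ where $(Y,\lambda D')$ is klt, not one on each component, but a single point is all you actually use in Step 3.
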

\begin{proof}
    Let $D$ be an $m$-basis type $\bb{Q}$-divisor of $V_{\vec{\bullet}}$ compatible with $\mathcal{F}_{1}$, we have $D=S_m(V_{\vec{\bullet}};Y)\cdot Y+\Gamma$ such that $\operatorname{Supp}(\Gamma)$ does not contain $Y$ and $\Gamma\vert_{Y}$ is an $m$-basis type $\bb{Q}$-divisor of $W_{\vec{\bullet}}$ compatible with $\mathcal{F}$. Denote~$\lambda_m:=\min \left\{ \frac{1}{S_m(V_{\vec{\bullet}};Y)},\ \delta_{Z\cap Y,m}(Y;W_{\vec{\bullet}},\mathcal{F}) \right\}$,
    then by definition we know that $(Y, \lambda_m \Gamma\vert_{Y})$ is lc at some point $p\in Z\cap Y$. By inversion of adjunction, the pair $(X, Y+\lambda_m \Gamma)$ is lc at $p$. Again by the definition of $\lambda_m$ we have $\lambda_m S_m(V_{\vec{\bullet}};Y)\leq 1$, hence the pair $$\left(X, \lambda_m\left(S_m(V_{\vec{\bullet}};Y)\cdot Y+\Gamma\right)\right)=(X, \lambda_m D)$$ is lc at $p$. Since $D$ is arbitrary, we can conclude that $$\delta_{Z\cap Y,m}(X;V_{\vec{\bullet}},\mathcal{F}_1)\geq \lambda_m=\min \left\{ \frac{1}{S_m(V_{\vec{\bullet}};Y)}, \delta_{Z\cap Y,m}(Y;W_{\vec{\bullet}},\mathcal{F}) \right\}.$$

    When taking $\limsup$ on both sides, we notice $\limsup$ commutes with $\min$ when one of the terms in the right-hand side has limit. Since $$\lim_{m\rightarrow\infty}\frac{1}{S_m(V_{\vec{\bullet}};Y)}=\frac{1}{S(V_{\vec{\bullet}};Y)}$$ exists, we obtain the desired inequality. 
\end{proof}

Let $X$ and $V_{\vec{\bullet}}$ as above, now we consider $$Y_{\bullet}:X=Y_0\supset Y_1\supset \dots \supset Y_{\ell}$$ an admissible flag of length $\ell$ on $X$. Assume that each $Y_i$ in the flag has at worst klt singularity and is a Cartier divisor on $Y_{i-1}$. Let $Y_{\bullet}^{(j)}$ be the flag given by $Y_0\supset Y_1\supset \dots \supset Y_j$, and let $W_{\vec{\bullet}}^{(j)}$ be the refinement of $V_{\vec{\bullet}}$ by $Y_{\bullet}^{(j)}$. Let $\mathcal{F}=\mathcal{F}_0$ be an arbitrary filtration on $W_{\vec{\bullet}}^{(\ell)}$, indexed by some totally ordered abelian monoid, we can inductively lift it to filtrations $\mathcal{F}_{\ell-i}$ on $W_{\vec{\bullet}}^{(i)}$ for all $0\leq i \leq \ell-1$. 
\begin{theorem}\label{AZ Admissible Flag}
    Let $Z\subset X$ be an irreducible closed subset of $X$ that intersects $Y_{\ell}$, we have $$\delta_Z(X;V_{\vec{\bullet}})\geq \min \left\{ \min\limits_{0\leq i\leq \ell-1} \left\{\frac{1}{S(W_{\vec{\bullet}}^{(i)};Y_{i+1})}\right\},\ \ \delta_{Z\cap Y_{\ell}}(Y_{\ell};W_{\vec{\bullet}}^{(\ell)},\mathcal{F}) \right\}.$$
\end{theorem}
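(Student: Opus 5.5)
The plan is to iterate Lemma \ref{lemma: induction step of AZ} along the flag and then remove the filtration on the left using Lemma \ref{valuative criterion for multigraded}. Work with the lifted filtrations $\mathcal{F}_{\ell-i}$ on $W^{(i)}_{\vec{\bullet}}$ for $0\le i\le \ell-1$, and take $\mathcal{F}_0=\mathcal{F}$ on $W^{(\ell)}_{\vec{\bullet}}$. By construction, $\mathcal{F}_{\ell-i}$ is the lift of $\mathcal{F}_{\ell-i-1}$ via formula (\ref{eq: lift of filtration}) applied to $Y_{i+1}\subset Y_i$.

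For each $0\le i\le \ell-1$, apply Lemma \ref{lemma: induction step of AZ} with the following data:
\begin{itemize}
\item the ambient variety is $Y_i$, which is klt;
\item the graded series is $V_{\vec{\bullet}}=W^{(i)}_{\vec{\bullet}}$;
\item the divisor is $Y=Y_{i+1}$, a klt prime Cartier divisor on $Y_i$, with refinement $W^{(i+1)}_{\vec{\bullet}}$;
\item the filtration on the refinement is $\mathcal{F}_{\ell-i-1}$;
\item the closed subset is $Z\cap Y_i$, which meets $Y_{i+1}$ because $Z\cap Y_\ell\neq\emptyset$.
\end{itemize}
The lemma allows reducible closed subsets, which matters here since $Z\cap Y_i$ may be reducible. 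It gives
$$\delta_{Z\cap Y_i}(Y_i;W^{(i)}_{\vec{\bullet}},\mathcal{F}_{\ell-i})\geq \min\Big\{\frac{1}{S(W^{(i)}_{\vec{\bullet}};Y_{i+1})},\ \delta_{Z\cap Y_{i+1}}(Y_{i+1};W^{(i+1)}_{\vec{\bullet}},\mathcal{F}_{\ell-i-1})\Big\}.$$
A descending induction on $i$, chaining these inequalities from $i=\ell-1$ down to $i=0$, yields
$$\delta_Z(X;V_{\vec{\bullet}},\mathcal{F}_\ell)\ge \min\Big\{\min_i \frac{1}{S(W^{(i)}_{\vec{\bullet}};Y_{i+1})},\ \delta_{Z\cap Y_\ell}(Y_\ell;W^{(\ell)}_{\vec{\bullet}},\mathcal{F})\Big\}.$$

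To pass from the filtered threshold to the unfiltered one, note that $Z$ is irreducible and $V_{\vec{\bullet}}$ contains an ample series. Lemma \ref{valuative criterion for multigraded} therefore gives $\delta_Z(X;V_{\vec{\bullet}},\mathcal{F}_\ell)=\delta_Z(X;V_{\vec{\bullet}})$, which is the claimed inequality.

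Hypothesis checks needed along the way:
\begin{itemize}
\item Each $W^{(i)}_{\vec{\bullet}}$ has bounded support and contains an ample series, by the remark in Example \ref{refinement}. This is used in the proof of Lemma \ref{lemma: induction step of AZ}, where $S_m\to S$ is needed to commute $\limsup$ with $\min$.
\item $\operatorname{ord}_{Y_{i+1}}$ has linear growth, which holds since $Y_{i+1}$ is Cartier on $Y_i$.
\item The lifted filtration $\mathcal{F}_{\ell-i}$ is a genuine filtration on the multigraded series, i.e.\ multiplicative with respect to the lexicographic order. This follows from additivity of $\operatorname{ord}_{Y_{i+1}}$ and multiplicativity of $\mathcal{F}_{\ell-i-1}$ on restrictions.
\end{itemize}

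The main obstacle is the bookkeeping at each step: verifying that an $m$-basis type divisor of $W^{(i)}_{\vec{\bullet}}$ compatible with $\mathcal{F}_{\ell-i}$ decomposes as $S_m(W^{(i)}_{\vec{\bullet}};Y_{i+1})\,Y_{i+1}+\Gamma$, with $\Gamma|_{Y_{i+1}}$ an $m$-basis type divisor of $W^{(i+1)}_{\vec{\bullet}}$ compatible with $\mathcal{F}_{\ell-i-1}$. This decomposition is already asserted inside Lemma \ref{lemma: induction step of AZ}, so in this proof I would only need to match the indexing of the refinements and lifts. The one remaining subtlety is the final identification via Lemma \ref{valuative criterion for multigraded}, which requires irreducibility of $Z$ only at the top level, not of the intermediate $Z\cap Y_i$.
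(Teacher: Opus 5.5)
Your proposal is correct and is essentially the paper's proof. You apply Lemma~\ref{lemma: induction step of AZ} to each pair $Y_{i+1}\subset Y_i$ with the lifted filtrations $\mathcal{F}_{\ell-i}$, chain the inequalities by induction on $i$, and identify $\delta_Z(X;V_{\vec{\bullet}},\mathcal{F}_\ell)$ with $\delta_Z(X;V_{\vec{\bullet}})$ via Lemma~\ref{valuative criterion for multigraded}, which is where irreducibility of $Z$ enters; your extra checks (bounded support and ample series for each $W^{(i)}_{\vec{\bullet}}$, possibly reducible $Z\cap Y_i$ meeting $Y_{i+1}$) are what the paper's brief argument leaves implicit.
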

\begin{proof}
    From Example \ref{refinement}, we know that $W_{\vec{\bullet}}^{(i)}$ contains an ample series and has bounded support for all~$0 \leq i \leq \ell$.
    Denote $W_{\vec{\bullet}}^{(0)}=V_{\vec{\bullet}}$. Applying Lemma \ref{lemma: induction step of AZ} to each $0\leq i\leq \ell-1$, we have $$\delta_{Z\cap Y_i}(Y_i;W^{(i)}_{\vec{\bullet}},\mathcal{F}_{\ell-i})\geq \min \left\{ \frac{1}{S(W_{\vec{\bullet}}^{(i)};Y_{i+1})}, \delta_{Z\cap Y_{i+1}}(Y_{i+1};W_{\vec{\bullet}}^{(i+1)},\mathcal{F}_{\ell-i-1}) \right\}.$$
    Note $\delta_{Z\cap Y_0}(Y_0;W^{(0)}_{\vec{\bullet}},\mathcal{F}_{\ell})=\delta_Z(X; V_{\vec{\bullet}})$, we then finish the proof by induction on $i$.
\end{proof}

\section{Estimate of Local Stability Threshold}\label{section: estimate of local stability threshold}
As mentioned in the introduction, the following criterion guides the proof of Theorem A.
\begin{lemma}(\autocite[Lemma 4.11]{AZ1})\label{criterionInAZ}
    Let $X$ be a Fano manifold of dimension $n$. Assume that

    1. $\delta_Z(X)\geq \frac{n+1}{n}$ for any subvariety $Z\subset X$ of dimension $\geq 1$;

    2. $\beta_X(E_x)>0$ for any $x\in X$, where $E_x$ denotes the exceptional divisor of the ordinary blowup of $x$.

    Then X is K-stable.
\end{lemma}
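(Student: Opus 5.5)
The plan is to argue by contradiction, combining the valuative characterization of K-stability with the fact that the stability threshold is computed by a divisor. Suppose $X$ is not K-stable, so $\delta(X)\le 1<\frac{n+1}{n}$. By \autocite[Theorem 1.2]{LXZ} (the optimal destabilization result already quoted above), when $\delta(X)<\frac{n+1}{n}$ the infimum in Theorem \ref{delta invariant as inf A/E} is attained. Concretely, there is a prime divisor $E$ over $X$ with
\[
\delta(X)=\frac{A_X(E)}{S(-K_X;E)}\le 1 .
\]
Let $Z=C_X(E)$. The proof then splits according to $\dim Z$.

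\emph{Case $\dim Z\ge 1$.} Theorem \ref{delta invariant as inf A/E} gives $\delta_Z(X)\le A_X(E)/S(-K_X;E)\le 1<\frac{n+1}{n}$. This contradicts hypothesis 1 directly.

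\emph{Case $Z=\{x\}$ a closed point.} Since $X$ is smooth, $x$ is a smooth point. Let $\pi\colon \widetilde X\to X$ be the ordinary blowup of $x$, with exceptional divisor $E_x\cong\mathbb P^{n-1}$ and $A_X(E_x)=n$. The divisor $E_x$ is of plt type, so the Abban--Zhuang estimate (Theorem \ref{AZ Admissible Flag}, in its plt-blowup form from \cite{AZ1}) applies to the flag $\widetilde X\supset E_x$. It gives
\[
\delta_x(X)\ \ge\ \min\Bigl\{\frac{n}{S(-K_X;E_x)},\ \inf_{z\in E_x}\delta_z(E_x,\Delta_{E_x};W_{\vec\bullet})\Bigr\},
\]
where $W_{\vec\bullet}$ is the refinement of $-K_X$ by $E_x$. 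Hypothesis 2 says $\beta_X(E_x)>0$, i.e.\ $n/S(-K_X;E_x)>1$. So the first term is $>1$.

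It remains to show the second term is also $>1$. Then $\delta_x(X)>1$, contradicting $\delta_x(X)\le A_X(E)/S(E)\le 1$.

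\emph{Main obstacle: bounding the second term.} I would control the term on $E_x$ through hypothesis 1. A point $z\in E_x$ is a tangent direction at $x$. A divisor $G$ over $E_x$ centred at $z$, or the whole of $E_x$ in the worst case, can be compared with a divisor over $X$ via the quasi-monomial interpolation between $\operatorname{ord}_{E_x}$ and a valuation along a curve $C\ni x$ with tangent direction $z$. One then uses $\delta_C(X)\ge \frac{n+1}{n}$ and convexity of $S$ along such families, together with Lemma \ref{lemma on movable threshold}(1) to bound $T(-K_X;E_x)$. The factor $\frac{n+1}{n}$ should be exactly what compensates for the ratio $\frac{n}{n+1}$ between $S$ and $T$ (the inequality $S\le\frac{n}{n+1}T$).

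If this comparison is too lossy, I would fall back on a more structural statement. The aim would be to show that, under hypothesis 1, any divisor computing $\delta(X)<\frac{n+1}{n}$ with zero-dimensional centre at a smooth point must be $E_x$ itself. The idea is that a non-trivial degeneration of the filtration induced by $E$, restricted to $E_x$, would produce a destabilizing centre of positive dimension on $X$. Granting that, $E=E_x$ would give $\beta_X(E_x)\le 0$, contradicting hypothesis 2. Either route closes the point case, and hence $X$ is K-stable.
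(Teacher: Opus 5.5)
The paper gives no proof of this lemma; it quotes it from \cite[Lemma 4.11]{AZ1}. On its own terms, your proposal has a genuine gap in the zero-dimensional case. The first half is fine. If $X$ is not K-stable, then $\delta(X)\le 1<\frac{n+1}{n}$, so $\delta(X)$ is computed by a divisor $E$, and hypothesis 1 forces $C_X(E)$ to be a closed point $x$. Using the Abban--Zhuang estimate for the plt blowup $E_x\cong\mathbb{P}^{n-1}$ together with hypothesis 2 also correctly reduces the point case to a sufficient condition: $\delta(E_x;W_{\vec{\bullet}})>1$, where $W_{\vec{\bullet}}$ is the refinement of $-K_X$ by $E_x$. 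But that inequality is the entire content of the point case, and you do not prove it. A warning sign is that nothing you actually prove uses the constant $\frac{n+1}{n}$ in hypothesis 1; the weaker assumption $\delta_Z(X)>1$ would do. Whatever closes the point case has to use that extra strength, or some other genuine input, and the proposal does not say what it is.

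Route (a) cannot work as described, because the comparison runs the wrong way.
\begin{itemize}
\item Let $D\ni x$ be a prime divisor smooth at $x$, and let $G=\mathbb{P}(T_xD)\subset E_x$. The leading form at $x$ of any section $s$ vanishes along $G$ to order at least $\operatorname{ord}_D(s)$.
\item Take a basis of $H^0(X,-mK_X)$ compatible with both the $\operatorname{ord}_x$-filtration and the $\operatorname{ord}_D$-filtration. Its leading forms give bases of the pieces $W_{m,j}$, so $S(W_{\vec{\bullet}};G)\ge S(-K_X;D)$.
\item Hence $\delta(E_x;W_{\vec{\bullet}})\le 1/S(W_{\vec{\bullet}};G)\le A_X(D)/S(-K_X;D)$.
\end{itemize}
So information along positive-dimensional centres bounds the refined threshold from above, never from below.

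The quasi-monomial interpolation has the same defect. Along such a segment $A$ is linear and $S$ is concave, so you only get upper bounds for $A/S$ at interior points in terms of the endpoints. Moreover, the valuations near $\operatorname{ord}_{E_x}$ that control $\delta(E_x;W_{\vec{\bullet}})$ are all centred at $x$, where hypothesis 1 says nothing. The heuristic that $\frac{n+1}{n}$ ``compensates'' for $S\le\frac{n}{n+1}T$ is not backed by any inequality.

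Route (b), that a minimizer centred at a smooth point must be $E_x$, is a stronger form of exactly what has to be shown. The degeneration remark offered for it is not an argument.

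As written, you have shown only that a destabilizing minimizer must be centred at a point. The zero-dimensional case still needs an actual proof or a precise reference.
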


To justify the first condition in Lemma \ref{criterionInAZ}, we firstly notice that it is sufficient to assume $Z$ is an irreducible curve. To determine $\delta_Z(X)$ in such case, we generalize a lemma in \cite{AZ1}, which estimates the local stability threshold at multiple points along a curve, by releasing some assumptions on the multigraded linear series. Then we give a convex-geometric interpretation of the term appearing in the estimate. 
    
First we need the following preparations.

\subsection{Stability Threshold Along Multiple Points}\label{Stability Threshold Along Several Points}

\begin{definition}\label{definition: f(gamma)}
    Let $C$ be a smooth curve, and $W_{\vec{\bullet}}$ an $\bb{N}^{r+1}$-graded linear series on $C$. For any $\vec{a}\in \bb{N}^{r+1}$ such that $W_{\vec{a}}\neq 0$, the fixed part $F_{\vec{a}}$ is a divisor on $C$, so $\deg F_{\vec{a}}$ is a well-defined number. 
    
    Note $F_{\vec{a}}+F_{\vec{a}^\prime}\geq F_{\vec{a}+\vec{a}^\prime}$ for any $\vec{a},\vec{a}^\prime\in\bb{N}^{r+1}$. Therefore, by Fekete's lemma, for any $\gamma\in \operatorname{Int}(\Delta_{\text{Supp}})\cap \bb{Q}^r$, we can define $$f(\gamma):=\lim\limits_{m\rightarrow\infty}\frac{\deg F_{m,m\gamma}}{m}=\inf\limits_{m}\frac{\deg F_{m,m\gamma}}{m},$$ where the limit and infimum are taken over sufficiently divisible integers $m$. 

    Furthermore, it is easy to show that $f(t\gamma_1+(1-t)\gamma_2)\leq tf(\gamma_1)+(1-t)f(\gamma_2)$ for all $t\in \bb{Q}\cap[0,1]$ and all $\gamma_1, \gamma_2\in \operatorname{Int}(\Delta_{\text{Supp}})\cap \bb{Q}^r$. Therefore, we can extend $f(\gamma)$ to a convex (hence continuous) function on~$\operatorname{Int}(\Delta_{\text{Supp}})$.
\end{definition}

\begin{lemma}\label{DegreeOfFixed}
    Let $C$ be a smooth curve, let $W_{\vec{\bullet}}$ be an $\bb{N}^{r+1}$-graded linear series on $C$ that has bounded support, and contains an ample series. Then $$\lim_{m\rightarrow\infty}\deg F_m(W_{\vec{\bullet}})=\frac{(r+1)!}{\operatorname{vol}(W_{\vec{\bullet}})}\int_{\Delta_{\operatorname{Supp}}}\operatorname{vol}_{W_{\vec{\bullet}}}(\gamma)f(\gamma)d\gamma.$$ In particular, it is a finite real number.
\end{lemma}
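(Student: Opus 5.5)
We will unwind the definition of $F_m(W_{\vec{\bullet}})$ from Lemma-Definition \ref{c_1 and fixed part} and recognize $\deg F_m(W_{\vec{\bullet}})$ as a Riemann sum. By definition,
$$\deg F_m(W_{\vec{\bullet}})=\frac{1}{mN_m}\sum_{\vec{a}\in\bb{N}^{r}}\dim(W_{m,\vec{a}})\,\deg F_{m,\vec{a}} =\frac{m^{r}}{N_m}\cdot\frac{1}{m^{r}}\sum_{\gamma\in \frac{1}{m}\bb{N}^{r}}\dim(W_{m,m\gamma})\,\frac{\deg F_{m,m\gamma}}{m}.$$
Since $\dim C=1$, the normalisation in Definition \ref{Multigraded Linear Series} gives $N_m/m^{r+1}\to \operatorname{vol}(W_{\vec{\bullet}})/(r+1)!$. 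So it suffices to show that the measures
$$\mu_m:=\frac{1}{m^{r}}\sum_{\gamma\in\frac1m\bb{N}^r}\frac{\dim W_{m,m\gamma}}{m}\,\delta_\gamma$$
converge weakly on $\Delta_{\text{Supp}}$ to $\operatorname{vol}_{W_{\vec{\bullet}}}(\gamma)\,d\gamma$. We must also control the fact that the weight $\deg F_{m,m\gamma}/m$ varies with $m$ and only converges to $f(\gamma)$ pointwise.

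The weak convergence $\mu_m\to\operatorname{vol}_{W_{\vec{\bullet}}}d\gamma$ is obtained as follows. On the curve $C$, for a section space of a given degree, $\dim W_{m,\vec a}$ is at most $\deg+1$, and $\operatorname{vol}_{W_{\vec{\bullet}}}(\gamma)$ is the limit of $\dim W_{m,m\gamma}/m$. Concretely, $\dim W_{m,m\gamma}/m$ is the length of the fibre over $\gamma$ of the Okounkov-type body $\Delta(\Gamma(W_{\vec{\bullet}}))\subset\bb{R}^{r+1}$, for the flag consisting of a point of $C$. Then the convergence of $\rho_m$ to Lebesgue measure (Boucksom's theorem, as quoted in Example \ref{refinement}), pushed forward under the projection to $\bb{R}^r$, gives $\mu_m\to \operatorname{vol}_{W_{\vec{\bullet}}}d\gamma$.

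Next, the weights. We have $0\le \deg F_{m,m\gamma}/m\le \deg(\gamma\cdot \vec L)$, which is bounded on the bounded set $\Delta_{\text{Supp}}$. Moreover $f$ is convex, hence continuous on the interior. The hard step is passing from pointwise convergence $\deg F_{m,m\gamma}/m\to f(\gamma)$ to convergence of the integral against the varying measures $\mu_m$. The approach I would try first is as follows.

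\textbf{Upper bound.} Fix $k$ and a compact $K\subset\operatorname{Int}(\Delta_{\text{Supp}})$, and use subadditivity $F_{\vec a}+F_{\vec a'}\ge F_{\vec a+\vec a'}$. For $\gamma$ near a lattice point $\gamma_0\in\frac1k\bb{Z}^r$, write $(m,m\gamma)=q(k,k\gamma_0)+(\text{small remainder})$, where the remainder lies in the interior of the support cone, so it is nonzero with fixed part of degree $O(\text{size})$ thanks to the ample series. This gives $\limsup \deg F_{m,m\gamma}/m\le \deg F_{k,k\gamma_0}/k+\epsilon$ uniformly on small neighbourhoods. Letting $k\to\infty$ and using continuity of $f$ yields $\limsup\int \le\int f\operatorname{vol}$.

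\textbf{Lower bound.} Convexity of the limit, equivalently concavity of $\gamma\mapsto \deg(\gamma\cdot\vec L)-f(\gamma)$ as a movable degree, gives the matching bound.

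The boundary contributes negligibly since the mass of $\mu_m$ near $\partial\Delta_{\text{Supp}}$ tends to zero and the weights are uniformly bounded. Finiteness follows from the boundedness of $f$ and of $\operatorname{vol}_{W_{\vec{\bullet}}}$.
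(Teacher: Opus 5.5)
Your overall architecture matches the paper's. You write $\deg F_m$ as the integral of $f_m$ against the push-forward to $\Delta_{\text{Supp}}$ of the atomic measures $\rho_m$ on the Okounkov body for a flag $C\ni p$, and invoke Boucksom's weak convergence. You get the upper bound from subadditivity plus the ample series, and truncate to a compact $K\Subset\operatorname{Int}(\Delta_{\text{Supp}})$ using the uniform bound on the weights.

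The genuine gap is the lower bound. Convexity of the limit function $f$ (equivalently, concavity of the movable degree) says nothing about the finite-level values $f_m(\gamma)=\deg F_{m,m\gamma}/m$ at the atoms $\gamma\in\frac1m\bb{N}^r$. Those atoms move with $m$, so neither the pointwise convergence $f_m(\gamma)\to f(\gamma)$ nor convexity of $f$ controls $\liminf_m\int f_m\,d\mu_m$. What you need is an inequality holding at every atom, and it comes from subadditivity via Fekete: $f(\gamma)=\inf_m \deg F_{m,m\gamma}/m$. Indeed $W_{jm,jm\gamma}\neq 0$ and $\deg F_{jm,jm\gamma}\le j\deg F_{m,m\gamma}$, so $f_m(\gamma)\ge f(\gamma)$ whenever $W_{m,m\gamma}\neq 0$. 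Hence $\int_K f_m\,d\mu_m\ge\int_K f\,d\mu_m\to\int_K f\operatorname{vol}_{W_{\vec\bullet}}\,d\gamma$. Here convexity enters only to make $f$ continuous on $K$, and you choose $K$ with Lebesgue-null boundary. This is exactly how the paper closes the argument: it sandwiches $f\le f_m\le f+\varepsilon$ on the atoms over $K$.

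A smaller point concerns your upper bound. When $\gamma\neq\gamma_0$, the remainder $(m,m\gamma)-q(k,k\gamma_0)$ is not bounded; it has size of order $m|\gamma-\gamma_0|$. To keep it in a translate $\vec a_0+\Pi$ of a closed subcone $\Pi$ of the interior (where every lattice point has $W\neq0$), you must reserve a fraction $\varepsilon m$ of the first coordinate, which costs $\varepsilon T$. You must also fix a finite net of rational points $\gamma_0$ before choosing $k$, because $f_k(\gamma_0)\to f(\gamma_0)$ is not uniform over all of $\frac1k\bb{Z}^r$. As written, "fix $k$, then let $k\to\infty$" would require exactly the averaged convergence you are trying to prove. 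The paper avoids both issues differently. It surrounds $\gamma$ with a simplex of rational points $\gamma_0,\dots,\gamma_r$ satisfying $f(\gamma_i)<f(\gamma)+\varepsilon$, and writes $(m,m\zeta)=\vec a+\sum k_i(m_0,m_0\gamma_i)$ with $|\vec a|$ bounded and $W_{\vec a}\neq0$. It then takes a finite subcover of $K$.
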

\begin{proof}
    Recall 
    $$F_m=F_m(W_{\vec{\bullet}}):=\frac{1}{mh^0(W_{m,\vec{\bullet}})}\sum_{\vec{a}\in \mathbb{N}^r}\dim W_{m,\vec{a}}F_{m,\vec{a}},$$
    then
    $$\deg F_m=\frac{m^{r+1}}{h^0(W_{m,\vec{\bullet}})}\sum_{\gamma} \frac{\dim W_{m,m\gamma}}{m^{r+1}}\frac{\deg F_{m,m\gamma}}{m},$$
    where the sum is taken over all $\gamma\in(\frac{1}{m}\bb{N}^{r})\cap \Delta_{\operatorname{Supp}(W_{\vec{\bullet}})}$.

    We define the function $f_m : \Delta_{\text{Supp}} \rightarrow \bb{R}_{\geq 0}$ by
    $$f_m(\gamma)=\left\{
    \begin{array}{lr}
        \frac{\deg F_{m,m\gamma}}{m}\quad &\text{if\ } W_{m,m\gamma}\neq 0,  \\
        0 \quad & \text{otherwise. } 
        \end{array}
    \right.$$

    Consider the admissible flag $C\ni p$ given by an arbitrary closed point $p$ on $C$, let $\Delta=\Delta(W_{\vec{\bullet}})$ be the Okounkov body of $W_{\vec{\bullet}}$ with respect to this flag.
    It is clear that $$\deg F_m=\frac{\int (f_m\circ p) d\rho_m}{\int d\rho_m},$$ where $p: \Delta\rightarrow \Delta_{\text{Supp}}$ is the natural projection. 

    We know that the measure $\rho_m$ weakly converges to the Lebesgue measure $\rho$ on $\Delta$, and~$\limsup_{m\rightarrow\infty} f_m(\gamma)=f(\gamma)$ for all $\gamma\in \operatorname{Int}(\Delta_{\text{Supp}})\cap \bb{Q}^r$. Here $f(\gamma)$ is defined in Definition \ref{definition: f(gamma)}. Next, we will show that $$\lim_{m\rightarrow\infty} \deg F_m=\frac{\int (f\circ p) d\rho}{\operatorname{vol}(\Delta)}=\frac{(r+1)!}{\operatorname{vol}(W_{\vec{\bullet}})}\int_{\Delta_{\operatorname{Supp}}}\operatorname{vol}_{W_{\vec{\bullet}}}(\gamma)f(\gamma)d\gamma,$$ and this completes our proof.

    Assume $W_{\vec{\bullet}}$ is a multigraded linear series associated to $L_0,\dots, L_r$, and denote $\vec{L}=(L_1, \dots, L_r)$. Since~$W_{\vec{\bullet}}$ has bounded support, there exists some constant $C_1>0$ such that $W_{m,\vec{a}}\neq 0$ only if $\lvert \vec{a} \rvert\leq C_1m$. Thus as $mL_0+\vec{a}\cdot \vec{L}-F_{m,\vec{a}}$ is effective, we know $\deg F_{m,\vec{a}}\leq \deg (mL_0+\vec{a}\cdot \vec{L})\leq Tm$ for some constant~$T>0$. Therefore $0\leq f(\gamma), f_m(\gamma)\leq T$ for all $m, \gamma$.

    Fix $\varepsilon>0$ and $\gamma\in \operatorname{Int}(\Delta_{\text{Supp}})$, then we can choose $\gamma_i\in \operatorname{Int}(\Delta_{\text{Supp}})\cap \bb{Q}^r, i=0,\dots ,r$, such that the convex hull spanned by them contains $\gamma$ in the interior, and $f(\gamma_i)< f(\gamma)+\varepsilon$. Fix a sufficiently divisible~$m_0$ such that $W_{m_0,m_0\gamma_i}\neq 0$ and $f_{m_0}(\gamma_i)<f(\gamma)+\varepsilon$. Then we can find a small open neighborhood $U=U(\gamma)$ in the interior of the convex hull of $\gamma_i$s, such that $f_{m_0}(\gamma_i)<f(\zeta)+\varepsilon$, for all $\zeta \in U$. Let $\Pi\subset \bb{R}^{r+1}$ be the cone spanned by the vectors $(1,\gamma_i)$. According to \autocite[Lemma 1.1]{Xubook}, there exists an $\vec{a}_0\in\bb{N}^{r+1}$ such that $W_{\vec{a}}\neq 0$ for all $\vec{a}\in (\vec{a}_0+\Pi) \cap \bb{N}^{r+1}$. Then there exists constants $C_0>0, C_2>0$ such that for all~$\zeta\in U\cap\bb{Q}^r$, and for any $m\in \bb{N}$ with $m\geq C_0$ and $m\zeta\in \bb{N}^r$, we have the following: $$(m,m\zeta)=\vec{a}+\sum_{i=0}^{r}k_i(m_0,m_0\gamma_i),$$ for some $\vec{a}\in \bb{N}^{r+1}$ and $k_i\in \bb{N}$ such that $W_{\vec{a}}\neq 0$ and $\lvert\vec{a}\rvert< C_2$. Therefore, we have $W_{m,m\zeta}\neq 0$ and
    $$\deg F_{m,m\zeta}\leq \deg F_{\vec{a}}+\sum_{i=0}^{r}k_i\deg F_{m_0,m_0\gamma_i}=\deg F_{\vec{a}}+\sum_{i=0}^{r}k_im_0f_{m_0}(\gamma_i).$$
    Since $\lvert\vec{a}\rvert< C_2$, we have $\deg F_{\vec{a}}< C_2T$. Also we know $\sum_i k_im_0\leq m$, and $f_{m_0}(\gamma_i)<f(\zeta)+\varepsilon$. Then 
    $$\deg F_{m,m\zeta}\leq C_2T+m\left(f\left(\zeta\right)+\varepsilon\right).$$

    Thus, there exists $M\geq C_0$, depending on $\varepsilon$ and the open subset $U(\gamma)$, such that $f_m(\zeta)< f(\zeta)+2\varepsilon$ for all $\zeta\in U(\gamma)$ and $m>M$. Also note that both $f_m(\gamma)\leq T$ and $f(\gamma)\leq T$ for all $m,\gamma$.
 
    Now, choose a compact subset $K\Subset\operatorname{Int}(\Delta_{\text{Supp}})$ such that $\operatorname{vol}(\Delta\backslash p^{-1}(K))<\varepsilon$ and $\rho(\partial p^{-1}(K))=0$, then the weak convergence gives $\rho_m(\Delta\backslash p^{-1}(K))<2\varepsilon$ when $m\gg 1$. Since $K$ is compact, we can choose a finite subcover from the open cover $\{U(\gamma) \mid \gamma\in K\}$. Therefore there exists a uniform $M>0$ such that~$f_m(\gamma)<f(\gamma)+\varepsilon$ for all $\gamma\in K$ and $m>M$. Then
    \begin{align*}
        &\left\lvert \int (f_m\circ p)d\rho_m-\int (f\circ p)d\rho\right\rvert \leq \left\lvert \int (f_m\circ p)d\rho_m-\int (f\circ p)d\rho_m\right\rvert + \left\lvert \int (f\circ p)d\rho_m-\int (f\circ p)d\rho\right\rvert\\
        \leq &\left\lvert \int_{\Delta\backslash p^{-1}(K)} (f_m\circ p-f\circ p)d\rho_m\right\rvert +\left\lvert \int_{p^{-1}(K)} (f_m\circ p-f\circ p)d\rho_m\right\rvert + \left\lvert \int (f\circ p)d\rho_m-\int (f\circ p)d\rho\right\rvert.
    \end{align*}

    The first term is bounded by $2T\times 2\varepsilon$. For the last term, note $f\circ p$ is continuous on $p^{-1}(K)$ and the measures $\rho_m$ weakly converge to $\rho$, so the difference on $p^{-1}(K)$ tends to 0 as $m\rightarrow\infty$. Outside~$p^{-1}(K)$ the difference is bounded by $\varepsilon T$. For the middle term, note that $f(\gamma)\leq f_m(\gamma)\leq f(\gamma)+\varepsilon$ whenever~$W_{m,m\gamma}\neq~0$, and these are exactly all the points with positive measure in the atomic measure $\rho_m$. Therefore $\lvert \int_{p^{-1}(K)} (f_m\circ p-f\circ p)d\rho_m\rvert\leq \varepsilon \operatorname{vol}(\Delta).$

    We can conclude now $\lim_{m\rightarrow\infty} \int (f_m\circ p)d\rho_m=\int (f\circ p)d\rho$. 
    
    It is clear that $\lim_{m\rightarrow\infty}\int d\rho_m=\int d\rho=\operatorname{vol}(\Delta)$. This proves the first equality.

    For the second equality, note that $$\int (f\circ p)d\rho=\int_{\Delta_{\text{Supp}}} \operatorname{vol}(p^{-1}(\gamma))f(\gamma)d\gamma,$$ also we have $\operatorname{vol}(W_{\vec{\bullet}})=(r+1)!\operatorname{vol}(\Delta)$, and $\operatorname{vol}_{W_{\vec{\bullet}}}(\gamma)=\operatorname{vol}(p^{-1}(\gamma))$ for all $\gamma\in \operatorname{Int}(\Delta_{\text{Supp}})$ when $\dim C=~1$.
\end{proof}

\begin{lemma}\label{lemma: condition 2 of almost complete}
    Let $C$ be a smooth curve, and $W_{\vec{\bullet}}$ be a $\bb{N}^{r+1}$-graded linear series on $C$ that has bounded support and contains an ample series. Then $W_{\vec{\bullet}}$ satisfies the condition (2) of Definition \ref{almost complete}. 
\end{lemma}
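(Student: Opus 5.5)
On a curve numerical equivalence is determined by degree, so I will take $L=\mathcal{O}_C(p)$ for a closed point $p$. For each $\gamma$ I will take $L_{m,\gamma}$ to be the line bundle $\mathcal{O}_C(M_{m,m\gamma})$ itself, with $\ell_{m,\gamma}=\deg M_{m,m\gamma}$. Then the inclusion $|M_{m,m\gamma}|\subseteq|L_{m,\gamma}|$ is tautological. Since $\dim W_{m,m\gamma}=\dim M_{m,m\gamma}$ always holds, the whole content of condition (2) is
$$\frac{\dim W_{m,m\gamma}}{h^0(C,M_{m,m\gamma})}\rightarrow 1 .$$

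Fix $\gamma\in\operatorname{Int}(\Delta_{\text{Supp}})\cap\bb{Q}^r$ and write $W_m:=W_{m,m\gamma}$, $F_m$, $M_m$, $d_m=\deg M_m$. The upper bound $\dim W_m\le h^0(M_m)\le d_m+1$ is trivial. Since $\deg(mL_0+m\gamma\cdot\vec{L})=mc_\gamma$ is linear in $m$ and $\deg F_m/m\to f(\gamma)$ (Definition \ref{definition: f(gamma)}), we get $d_m/m\to c_\gamma-f(\gamma)=:d(\gamma)$. We also have $h^0(M_m)\le d_m+1$. So it suffices to show $\liminf_m \dim W_m/m\ge d(\gamma)$.

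For this lower bound, fix $\varepsilon>0$ and choose $m_0$ with three properties. First, $\deg F_{m_0}/m_0<f(\gamma)+\varepsilon$, which is possible because $f$ is an infimum. Second, the movable part $M_{m_0}$ is base point free, which holds by definition of the fixed part. Third, the morphism $\phi\colon C\to\bb{P}(W_{m_0}^\vee)$ is birational onto its image. For the third property I use condition 3 of ``contains an ample series'': for $\gamma$ interior, a suitable multiple of $W_{m_0}$ contains, up to a fixed effective divisor, $H^0(C,kA)$ with $A$ ample. Enlarging $m_0$ keeps the first two properties up to $\varepsilon$, by subadditivity, and makes $W_{m_0}$ separate points generically. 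Then the image of $\operatorname{Sym}^kW_{m_0}\to W_{km_0}$ surjects onto the degree-$k$ part of the homogeneous coordinate ring of $\phi(C)$. That ring has Hilbert polynomial $k\deg\phi(C)+\mathrm{const}=kd_{m_0}+\mathrm{const}$. Hence
$$\dim W_{km_0}\ge kd_{m_0}-c \ge km_0\bigl(d(\gamma)-\varepsilon\bigr)-c.$$

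To pass from multiples of $m_0$ to all sufficiently divisible $m$, I will reuse the argument in the proof of Lemma \ref{DegreeOfFixed}. Write $(m,m\gamma)=\vec{a}+k(m_0,m_0\gamma)$ with $W_{\vec{a}}\ne0$ and $|\vec{a}|$ bounded, as provided by \autocite[Lemma 1.1]{Xubook}. Multiplying by a fixed nonzero section of $W_{\vec{a}}$ injects $W_{km_0}$ into $W_m$, so $\dim W_m\ge \dim W_{km_0}$ with $k\ge (m-C_2)/m_0$. Dividing by $m$ and letting $\varepsilon\to0$ gives $\liminf\dim W_m/m\ge d(\gamma)$, and therefore the ratio tends to $1$.

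I expect the main obstacle to be the birationality step: making sure that a single $m_0$ can be chosen which is simultaneously nearly optimal for the fixed part and gives a birational map. If $\phi$ is only a degree-$e$ cover, the count drops to $kd_{m_0}/e$ and the argument fails. I would handle this by multiplying a near-optimal $W_{m_0}$ by sections coming from the ample series, which preserves the fixed-part estimate up to $\varepsilon$ while forcing generic point separation.
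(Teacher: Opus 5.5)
Your argument is correct, but it takes a different route from the paper.

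\textbf{The paper's route.} The paper also uses the ample series at the interior point $(1,\gamma)$, via \autocite[Lemma 4.16]{lazarsfeld2009convex}, to see that $\varphi_m$ is birational with $\deg\varphi_m(C)=\deg M_{m,m\gamma}$ for all sufficiently divisible $m$. It then cites Boucksom's result \autocite[Theorem 16]{szekelyhidi2015filtrations}, which gives both the existence of $e=\lim \dim W_{m,m\gamma}/m$ and the identity $e=\lim\deg\varphi_m(C)/m$. Riemann--Roch finishes the proof.

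\textbf{Your route.} You prove the needed curve case of that theorem by hand. The upper bound is the squeeze $\dim W_m\le d_m+1$. The lower bound comes from the Hilbert polynomial of the image curve, whose leading coefficient is $d_{m_0}$ because $\phi$ is birational and $M_{m_0}$ is base point free. You transport it from multiples of $m_0$ to all sufficiently divisible $m$ by multiplying with a fixed section.

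\textbf{What each buys.} Your version is self-contained and elementary. It also makes explicit the formula $\lim_m\dim W_{m,m\gamma}/m=\deg\bigl((1,\gamma)\cdot\vec{L}\bigr)-f(\gamma)$, which the paper later extracts from this lemma in the proof of Theorem \ref{read info from barycenter of convex body}. The paper's version is shorter, at the cost of relying on a general Fujita-approximation-type theorem.

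\textbf{Three small remarks.}
\begin{enumerate}
\item Condition 3 of Definition \ref{Multigraded Linear Series} is only assumed at one point $\vec{a}_0$. To use it at your $\gamma$ you need its transfer to every interior point. That is exactly the cited \autocite[Lemma 4.16]{lazarsfeld2009convex}; alternatively, multiply by a nonzero section as in your last paragraph, which does not change the rational map.
\item The obstacle you flag does not really arise. Once $W_{m,m\gamma}$ contains a copy of $H^0(C,mA)$ (embedded via the section of $mE$) with $mA$ very ample for all sufficiently divisible $m$, every such $m$ gives a birational map. Also, $\deg F_{m,m\gamma}/m\to f(\gamma)$ is an honest limit by Fekete's lemma, not just an infimum. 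So any large, sufficiently divisible $m_0$ meets all your requirements at once.
\item For the decomposition $(m,m\gamma)=\vec{a}+k(m_0,m_0\gamma)$ you do not need the cone lemma of \autocite{Xubook}. The vector $\vec{a}$ necessarily lies on the ray through $(1,\gamma)$, and condition 2 of containing an ample series already gives $W_{j(N,N\gamma)}\neq 0$ for all $j\gg 0$.
\end{enumerate}
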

\begin{proof}
    Assume $W_{\vec{\bullet}}$ is a multigraded linear series associated to $\vec{L}=(L_0, \dots, L_r)$. Since $W_{\vec{\bullet}}$ contains an ample series, from \autocite[Lemma 4.16]{lazarsfeld2009convex}, we see that for any $\vec{a}\in \operatorname{Int}(\operatorname{Supp}(W_{\vec{\bullet}}))\cap \bb{N}^{r+1}$, the condition 3 of containing an ample series is satisfied. That is, there exists a decomposition $\vec{a}\cdot \vec{L}=A+E$ with $A$ an ample $\Q$-line bundle and $E$ an effective $\Q$-divisor such that $H^0(C, mA)\subseteq W_{m\vec{a}}$ for all sufficiently divisible $m$.

    For all $\gamma\in\operatorname{Int}(\Delta_{\text{Supp}})\cap\Q^{r}$, we consider the singly-graded linear series $V_{\bullet}=\bigoplus_m W_{m,m\gamma}$. Let $\bb{N}(V_{\bullet})=~\{m\mid V_m\neq 0\}$. For every $m\in \bb{N}(V_{\bullet})$, decompose $V_m$ into movable and fixed part, we have $V_m=M_{m,m\gamma}+F_{m,m\gamma}$, with $M_{m, m\gamma}\subseteq \lvert (m,m\gamma)\cdot \vec{L}-F_{m,m\gamma}\rvert$. We denote~$\deg(M_{m, m\gamma}):=\deg\left(\left(m,m\gamma\right)\cdot \vec{L}-F_{m, m\gamma}\right)$. Since $\operatorname{Supp}(F_{m, m\gamma})$ is always finite, the linear system $V_m$ defines a rational map $\varphi_m\colon C \dashrightarrow \bb{P}(V_m^*)$. For $m$ sufficiently divisible, we have decomposition~$(1,\gamma)\cdot \vec{L}=A+E$ such that $mA$ is very ample and $H^0(C,mA)\subseteq V_m$. Therefore $\varphi_m$ defines a birational map, gives $\dim\operatorname{Im}(\varphi_m)=1$, and $\deg (\varphi_m(C))=\deg M_{m, m\gamma}$. Following the notation of \autocite[\S 8.1]{szekelyhidi2015filtrations}, we denote $$\kappa=\kappa(V_{\bullet}):=\operatorname{tr.deg}(\oplus_mV_m/\bb{C})-1.$$Then by \autocite[Theorem 16 (ii)]{szekelyhidi2015filtrations}, we have $\kappa=\dim\varphi_m$ for all $m\in\bb{N}$ sufficiently large, which gives $\kappa=1$. 
    Apply \autocite[Theorem 16 (i)]{szekelyhidi2015filtrations}, we know that $$\quad e=e(V_{\bullet}):=\lim_{m\in\bb{N}(V), m\rightarrow\infty}\frac{\kappa!}{m^{\kappa}}\dim V_m=\lim_{m\in\bb{N}(V), m\rightarrow\infty}\frac{\dim V_m}{m}$$ exists in $(0,\infty)$, and from the other claim in \autocite[Theorem 16 (ii)]{szekelyhidi2015filtrations} we have $$e=\lim_{m\in\bb{N}(V), m\rightarrow\infty}\frac{\deg \varphi_m(C)}{m}=\lim_{m\in\bb{N}(V), m\rightarrow\infty}\frac{\deg M_{m, m\gamma}}{m}.$$ Therefore for any closed point $p\in C$, we have 
    \begin{equation}\label{eq1}
        1=\lim_{m\rightarrow\infty}\frac{\dim V_m}{\deg M_{m, m\gamma}}=\lim_{m\rightarrow\infty}\frac{\dim M_{m, m\gamma}}{\deg M_{m, m\gamma}}=\lim_{m\rightarrow\infty}\frac{\dim M_{m, m\gamma}}{h^0(C,\deg M_{m,m\gamma}\cdot p)},
    \end{equation}where the limit is taken over sufficiently divisible $m$. The last equality is due to the Riemann-Roch theorem for smooth curves.
\end{proof}

With Lemmas \ref{DegreeOfFixed} and \ref{lemma: condition 2 of almost complete}, we can generalize \autocite[Lemma 4.16]{AZ1} to the following:

\begin{lemma}\label{GeneralizedLocalDeltaOnCurve}
    Let $C$ be a smooth curve, and $W_{\vec{\bullet}}$ be a $\bb{N}^{r+1}$-graded linear series on $C$ that has bounded support and contains an ample series. Let $Z=\{p_1,\dots, p_k\}$ be a collection of $k$ distinct points on $C$, let~$Q\in C$ be a very general point on $C$, and $\mathcal{F}$ be the filtration on $W_{\vec{\bullet}}$ induced by $Q$. Then
    $$\delta_{Z}(C;W_{\vec{\bullet}},\mathcal{F})\geq \frac{2k}{\deg c_1(W_{\vec{\bullet}})+\lim_{m\rightarrow \infty}\deg F_{m}}.$$
    The exact meaning of ``very general" will be made explicit in the proof.
\end{lemma}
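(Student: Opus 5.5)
We work directly with $m$-basis type $\Q$-divisors compatible with $\mathcal{F}$. On a curve, a basis of $W_{m,\vec a}$ compatible with the filtration induced by $Q$ is obtained as follows. First write each section as an element of the movable part plus the fixed part $F_{m,\vec a}$. Then choose the basis so that the orders of vanishing at $Q$ of the movable parts run through distinct values $0,1,\dots,\dim W_{m,\vec a}-1$; this is automatic for a basis compatible with the flag $C\ni Q$. Hence any such $m$-basis type divisor has the form
\[
D=F_m+\frac{1}{mN_m}\sum_{\vec a}\sum_{j=0}^{\dim W_{m,\vec a}-1}D_{\vec a,j},\qquad \operatorname{ord}_Q(D_{\vec a,j})=j+\operatorname{ord}_Q F_{m,\vec a}.
\]
Here $D_{\vec a,j}$ is the divisor of the movable part of the $j$-th basis element.

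The key step is to bound, for each point $p_i\in Z$, the multiplicity $\operatorname{mult}_{p_i}D$. Since $C$ is a smooth curve, $\operatorname{lct}_Z(C;D)=\max_i 1/\operatorname{mult}_{p_i}D$, so it suffices to show
\[
\min_i\operatorname{mult}_{p_i}D\le\frac{1}{k}\sum_i\operatorname{mult}_{p_i}D\le\frac{\deg c_1(W_{\vec\bullet})+\lim\deg F_m}{2k}+o(1).
\]
The contribution of the fixed part is at most $\sum_i\operatorname{mult}_{p_i}F_m\le\deg F_m$.

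For the movable part, the idea of \autocite[Lemma 4.16]{AZ1} applies. Here ``very general'' means that $Q$ avoids the countably many base points of all $W_{\vec a}$, avoids $Z$, and avoids the countably many points where some sub-linear system $M_{\vec a}(-jQ)$ acquires extra base points in $Z$. Under this choice, the sections in the $j$-th filtration step vanish to order $j$ at $Q$, and their total degree is $\deg M_{m,\vec a}$. So the residual degree available at $Z$ is at most $\deg M_{m,\vec a}-j$. Summing over $j$ gives
\[
\sum_j\sum_i\operatorname{mult}_{p_i}D_{\vec a,j}\le\sum_j(\deg M_{m,\vec a}-j)\approx\tfrac12\dim W_{m,\vec a}\deg M_{m,\vec a}.
\]
This approximation uses Lemma \ref{lemma: condition 2 of almost complete}, namely $\dim M_{m,m\gamma}/\deg M_{m,m\gamma}\to1$.

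After dividing by $mN_m$ and passing to the limit, the movable contribution is $\tfrac12\deg(c_1(W_{\vec\bullet})-\lim F_m)$. Lemma \ref{DegreeOfFixed} guarantees that $\lim\deg F_m$ is finite, so $\deg c_1$ of the movable part equals $\deg c_1(W_{\vec\bullet})-\lim\deg F_m$. Adding the fixed contribution $\lim\deg F_m$ gives
\[
\sum_i\operatorname{mult}_{p_i}D\le\tfrac12\bigl(\deg c_1+\lim\deg F_m\bigr)+o(1).
\]
This is the required bound, and taking $\limsup$ of $\delta_{Z,m}$ finishes the argument.

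The main obstacle is the error terms: the approximation $\dim W\approx\deg M$ is only asymptotic, and it is not uniform in $\gamma$ near the boundary of $\Delta_{\text{Supp}}$. I would handle this as in the proof of Lemma \ref{DegreeOfFixed}. Restrict to a compact $K\Subset\operatorname{Int}(\Delta_{\text{Supp}})$ where the convergence is uniform, and bound the complement crudely by $T$ times a small measure.
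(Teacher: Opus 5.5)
Your proposal is essentially the paper's argument: the bound $\sum_i\operatorname{mult}_{p_i}D\le\deg D-\operatorname{ord}_QD$ from \autocite[Lemma 4.16]{AZ1}, together with $\lim\operatorname{ord}_QD=S(W_{\vec\bullet};Q)=\frac12\bigl(\deg c_1(W_{\vec\bullet})-\lim\deg F_m\bigr)$, obtained from $\dim W_{m,m\gamma}\sim\deg M_{m,m\gamma}$ and Lemma \ref{DegreeOfFixed}. (Only $\operatorname{ord}_QD_{\vec a,j}\ge j$ is automatic, not equality, since equality needs $Q$ to be a non-inflection point; but the inequality is all you use, and your extra genericity condition on $M_{\vec a}(-jQ)$ is superfluous.) The one real difference is in the limit: the paper evaluates it via $\operatorname{vol}_{W^t_{\vec\bullet}}(\gamma)=g(\gamma)-t$, with $g(\gamma)=\lim_m\deg M_{m,m\gamma}/m$, and the integral formula for $S$, which needs only pointwise asymptotics, whereas your finite-$m$ count needs $\dim W_{m,m\zeta}\ge m\bigl(g(\zeta)-\varepsilon\bigr)$ uniformly on compacts; this is not contained in Lemma \ref{DegreeOfFixed}, but it follows by that lemma's covering argument from the superadditivity $\dim W_{\vec a+\vec a'}-1\ge(\dim W_{\vec a}-1)+(\dim W_{\vec a'}-1)$, which holds on a curve because vanishing orders at $Q$ add under multiplication of sections.
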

\begin{proof}
    Following the proof of \autocite[Lemma 4.16]{AZ1}, we have 
    $$\delta_Z(C; W_{\vec{\bullet}},\mathcal{F})\geq \frac{k}{\deg c_1(W_{\vec{\bullet}})-S(W_{\vec{\bullet}};Q)}.$$
    Therefore the only thing left is to compute $S(W_{\vec{\bullet}};Q)$.
    
    Since for every $m\geq 0$, $F_m(W_{\vec{\bullet}})$ has finite support, the union $\bigcup_{m\geq 0}\operatorname{Supp}\left(F_m\left(W_{\vec{\bullet}}\right)\right)$ is a countable set. We take $Q\in C$ an arbitrary point avoiding this set, and let $\mathcal{F}$ be the filtration induced by $Q$. We can define the multigraded linear series $W_{\vec{\bullet}}^t$ associated to $\mathcal{F}$ (see Remark \ref{volume function and compute S}). Following the notation in the proof of Lemma \ref{lemma: condition 2 of almost complete}, set $$g(\gamma):=\lim_{m\rightarrow\infty}\frac{\deg M_{m,m\gamma}}{m}=\deg\left(\left(1,\gamma\right)\cdot\vec{L}\right)-f(\gamma),$$ here $f(\gamma)$ is defined in Definition \ref{definition: f(gamma)}.
    Using the same argument as in the proof of \autocite[Lemma 2.13]{AZ1}, we have $$\lim_{m\rightarrow\infty}\frac{\dim \mathcal{F}^{mt}W_{m,m\gamma}}{m}=\lim_{m\rightarrow\infty}\frac{\dim \mathcal{F}^{mt}H^0(C,\deg M_{m,m\gamma}\cdot P)}{m}$$ for any point $P\in C$. Although almost completeness is assumed in \autocite[Lemma 2.13]{AZ1}, we emphasize that, the only property of $W_{\vec{\bullet}}$ used in the relevant part of that proof is equation (\ref{eq1}), which we have already established.
    
    Recall Remark \ref{volume function and compute S}, the left-hand side is exactly $\operatorname{vol}_{W_{\vec{\bullet}}^t}(\gamma)$. We compute the right-hand side by Riemann-Roch theorem: 
    \begin{align*}
        \lim_{m\rightarrow\infty}&\frac{\dim \mathcal{F}^{mt}H^0(C,\deg M_{m,m\gamma}\cdot P)}{m}=\lim_{m\rightarrow\infty}\frac{h^0(C,\deg M_{m,m\gamma}\cdot P-mt\cdot Q)}{m}\\
        &=\lim_{m\rightarrow\infty}\frac{\deg M_{m,m\gamma}-mt}{m}=g(\gamma)-t.
    \end{align*}
    Therefore, $\operatorname{vol}_{W_{\vec{\bullet}}^t}(\gamma)=g(\gamma)-t$ for all $t\in [0,g(\gamma))$. Then 
    \begin{align*}
        S(W_{\vec{\bullet}};\mathcal{F})&=\frac{1}{\operatorname{vol}\left(\Delta\left(W_{\vec{\bullet}}\right)\right)}\int_0^{\infty}\int_{\Delta_{\text{Supp}}^t}\operatorname{vol}_{W_{\vec{\bullet}}^t}(\gamma)d\gamma dt =\frac{1}{\operatorname{vol}\left(\Delta\left(W_{\vec{\bullet}}\right)\right)}\int_{\Delta_{\text{Supp}}}\frac{\left(g(\gamma)\right)^2}{2}d\gamma\\
        &=\frac{1}{2\operatorname{vol}\left(\Delta\left(W_{\vec{\bullet}}\right)\right)}\int_{\Delta_{\text{Supp}}}g(\gamma)\cdot \deg\left((1,\gamma)\cdot \vec{L}\right)d\gamma-\frac{1}{2\operatorname{vol}\left(\Delta\left(W_{\vec{\bullet}}\right)\right)}\int_{\Delta_{\text{Supp}}}g(\gamma)f(\gamma)d\gamma.
    \end{align*}
    Recall that $$\lim_{m\rightarrow\infty}\deg F_m(W_{\vec{\bullet}})=\frac{(r+1)!}{\operatorname{vol}(W_{\vec{\bullet}})}\int_{\Delta_{\operatorname{Supp}}}\operatorname{vol}_{W_{\vec{\bullet}}}(\gamma)f(\gamma)d\gamma,$$ and note $\operatorname{vol}_{W_{\vec{\bullet}}}(\gamma)=g(\gamma), \operatorname{vol}(W_{\vec{\bullet}})=(r+1)!\operatorname{vol}\left(\Delta\left(W_{\vec{\bullet}}\right)\right)$. Also notice that (or refer to proof of \autocite[Lemma-Definition 2.14]{AZ1}) $$\deg c_1(W_{\vec{\bullet}})=\deg L_0+\sum_{i=1}^r\frac{1}{\operatorname{vol}\left(\Delta\left(W_{\vec{\bullet}}\right)\right)}\int_{\Delta\left(W_{\vec{\bullet}}\right)}x_id\rho\cdot \deg L_i=\frac{1}{\operatorname{vol}\left(\Delta\left(W_{\vec{\bullet}}\right)\right)}\int_{\Delta_{\operatorname{Supp}}}g(\gamma)\cdot\deg\left(\left(1,\gamma\right)\cdot \vec{L} \right)d\gamma.$$
    Therefore $S(W_{\vec{\bullet}};\mathcal{F})=\frac{1}{2}\left(\deg c_1\left(W_{\vec{\bullet}}\right)-\lim_{m\rightarrow\infty}\deg F_m\left(W_{\vec{\bullet}}\right)\right)$. This completes the proof.
\end{proof}

\begin{definition}\label{definition: construction of the convex bodt}
    Let $C$ be a smooth curve, and $W_{\vec{\bullet}}$ be an $\bb{N}^{r+1}$-graded linear series on $C$ associated to~$\bb{Q}$-Cartier $\Q$-divisors $L_0, L_1, \dots, L_r$. Denote $\alpha=\deg L_0, \beta_i=\deg L_i (1\leq i\leq r)$ and $\vec{\beta}=(\beta_1, \dots, \beta_r)$. Assume $W_{\vec{\bullet}}$ has bounded support and contains an ample series, we associate to $W_{\vec{\bullet}}$ a bounded convex body $\bb{O}(W_{\vec{\bullet}})\subset \bb{R}^{r+1}$ as follows.

    Consider the discrete semigroup $$\Gamma=\left\{(m, \vec{a}, x)\in \bb{N}^{r+2}\mid W_{m,\vec{a}}\neq 0, x\in\left[\deg F(W_{m,\vec{a}}), \alpha m+\vec{a}\cdot \vec{\beta}\right]\cap \bb{N} \right\},$$
    then let $\Sigma$ be the closed convex cone spanned by $\Gamma$, and $\bb{O}(W_{\vec{\bullet}})=\Sigma\cap \left(\{1\}\times \bb{R}^{r+1}\right)\subset \bb{R}^{r+1}$. 

    Explicitly, we can write 
    \begin{equation}\label{construction of O(W)}
        \bb{O}(W_{\vec{\bullet}})=\overline{\left\{(\gamma,z)\in\bb{R}^r\times \bb{R}\mid \gamma\in\operatorname{Int}\Delta_{\operatorname{Supp}(W_{\vec{\bullet}})}, f(\gamma)\leq z\leq \alpha+\vec{\beta}\cdot\gamma \right\}}\subset \bb{R}^{r+1}.    
    \end{equation}
    It is bounded as $W_{\vec{\bullet}}$ has bounded support, and is convex as $f(\gamma)$ is convex.
\end{definition}

\begin{proposition}\label{proposition: Okounkov body v.s O(W)}
    Pick $Q\in C$ a very general point, in the sense of Lemma \ref{GeneralizedLocalDeltaOnCurve}, then the Okounkov body of $W_{\vec{\bullet}}$ with respect to the admissible flag $C\ni Q$ is: 
    $$\Delta(W_{\vec{\bullet}})=\left\{(\gamma,z)\in \bb{R}^r\times \bb{R}\mid \gamma\in\Delta_{\operatorname{Supp}(W_{\vec{\bullet}})}, 0\leq z\leq \alpha+\vec{\beta}\cdot\gamma-f(\gamma) \right\}.$$ 
    In particular, we have $\operatorname{vol}(\Delta(W_{\vec{\bullet}}))=\operatorname{vol}(\bb{O}(W_{\vec{\bullet}}))$, and the first $r$ coordinates of the barycenters of these two convex bodies are identical.
\end{proposition}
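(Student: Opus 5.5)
The plan is to compute the Okounkov body directly from the valuation vector $(\vec{a}, \operatorname{ord}_Q(s))$ of sections. On a curve, the admissible flag $C\ni Q$ gives the valuation $\nu(s)=\operatorname{ord}_Q(s)$ on $W_{m,\vec{a}}$. So the graded semigroup is
$$\Gamma_Q=\{(m,\vec{a},j)\mid W_{m,\vec{a}}\neq 0,\ j\in \operatorname{ord}_Q(W_{m,\vec{a}}\setminus 0)\}.$$
The key step is to identify the set of vanishing orders at $Q$ attained in each $W_{m,\vec{a}}$. Since $Q$ is very general in the sense of the proof of Lemma \ref{GeneralizedLocalDeltaOnCurve}, $Q\notin\operatorname{Supp}F_{m,\vec{a}}$ for all $(m,\vec{a})$. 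Hence the orders attained in $W_{m,\vec{a}}$ are exactly those attained by the movable part $M_{m,\vec{a}}$, which is base-point-free at $Q$.

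The number of distinct orders equals $\dim W_{m,\vec{a}}$. All of them lie in $[0,\deg M_{m,\vec{a}}]$, where $\deg M_{m,\vec{a}}=\alpha m+\vec{a}\cdot\vec{\beta}-\deg F_{m,\vec{a}}$. For $(m,\vec{a})=(m,m\gamma)$ with $\gamma\in\operatorname{Int}\Delta_{\operatorname{Supp}}\cap\Q^r$, equation (\ref{eq1}) in the proof of Lemma \ref{lemma: condition 2 of almost complete} gives $\dim W_{m,m\gamma}/\deg M_{m,m\gamma}\to 1$. Thus the attained orders fill $[0,\deg M_{m,m\gamma}]$ up to a proportion $o(1)$. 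After dividing by $m$, the fibre of $\Delta(W_{\vec{\bullet}})$ over $\gamma$ is therefore contained in $[0,g(\gamma)]$ and has length (one-dimensional volume) $g(\gamma)$. Here $g(\gamma)=\alpha+\vec{\beta}\cdot\gamma-f(\gamma)$ by Definition \ref{definition: f(gamma)}.

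Since the fibre of a convex body is a closed interval, a subinterval of $[0,g(\gamma)]$ of length $g(\gamma)$ must be all of $[0,g(\gamma)]$. To make this rigorous I would use two facts:
\begin{enumerate}
\item[(i)] The upper bound: every point of $\Gamma_Q$ satisfies $0\le j\le \alpha m+\vec{a}\cdot\vec\beta-\deg F_{m,\vec{a}}$. Passing to the cone, this defines a closed convex set because $f$ is convex, and the bound becomes $z\le \alpha+\vec\beta\cdot\gamma-f(\gamma)$. Here I must check $\deg F_{m,m\gamma}/m\ge f(\gamma)$, which holds because $f$ is the infimum (Fekete).
\item[(ii)] Fibres of the Okounkov body over interior points have length $\operatorname{vol}_{W_{\vec{\bullet}}}(\gamma)=g(\gamma)$, as used at the end of the proof of Lemma \ref{DegreeOfFixed}.
\end{enumerate}
Combining (i) and (ii) gives equality of fibres over $\operatorname{Int}\Delta_{\operatorname{Supp}}$. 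Taking closures, using continuity of $f$ in the interior, handles the boundary.

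The main obstacle is step (i), namely that the cone closure does not exceed the bound. One has to rule out limits of points $(\gamma_m,j/m)$ with $\gamma_m\to\gamma$ exceeding $g(\gamma)$. I would handle this using lower semicontinuity of $-f$, i.e.\ upper semicontinuity of $g$, which follows from convexity of $f$ on the interior. Alternatively, I would compare volumes: the inclusion of the explicit region in $\Delta$ plus the equality of volumes $\int g\,d\gamma=\operatorname{vol}(W_{\vec{\bullet}})/(r+1)!$ forces equality up to measure zero, and both sets are closed convex, so they are equal.

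The final claims are then immediate. The map $(\gamma,z)\mapsto(\gamma, z+f(\gamma))$ sends each fibre $[0,g(\gamma)]$ of $\Delta$ to the fibre $[f(\gamma),\alpha+\vec\beta\cdot\gamma]$ of $\bb{O}(W_{\vec{\bullet}})$ by a translation. It therefore preserves fibre lengths, so by Fubini the two volumes agree. The first $r$ barycenter coordinates are $\int\gamma_i\,g(\gamma)\,d\gamma/\int g$ for both bodies, so they coincide.
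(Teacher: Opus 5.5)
Your argument is correct, and it is essentially the justification the paper leaves implicit: the proposition is stated without proof, followed only by the informal remark. Since $Q$ avoids every $\operatorname{Supp}F_{m,\vec a}$, vanishing orders at $Q$ come from the base-point-free movable parts and are bounded by $\deg M_{m,\vec a}$, which puts $\Delta(W_{\vec{\bullet}})$ inside the region; then (\ref{eq1}) and the fibre identity $\operatorname{vol}_{W_{\vec{\bullet}}}(\gamma)=\operatorname{vol}(p^{-1}(\gamma))$ from the end of the proof of Lemma \ref{DegreeOfFixed} force each interior fibre to be all of $[0,g(\gamma)]$.

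One small slip: closedness of the region needs lower semicontinuity of $f$ (equivalently upper semicontinuity of $g$), not of $-f$. Over $\partial\Delta_{\operatorname{Supp}}$ you should use the lower semicontinuous hull $\bar f$ of $f$. It still satisfies $\bar f(\gamma)\le \deg F_{m,m\gamma}/m$ by subadditivity along segments from $\gamma$ into the interior, so semigroup points over the boundary also lie in the region.
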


Informally speaking, the convex body $\bb{O}(W_{\vec{\bullet}})$ simply ``moves the last coordinate of the Okounkov body upward by $f(\gamma)$".

\begin{theorem}\label{read info from barycenter of convex body}
    Notation as above. Let $(b_1,\dots, b_{r+1})$ be the barycenter of $\bb{O}(W_{\vec{\bullet}})$, then $$\deg c_1(W_{\vec{\bullet}})=\alpha+\sum_{i=1}^r \beta_i b_i,\quad \deg c_1(W_{\vec{\bullet}})+\lim_{m\rightarrow \infty}\deg F_{m}=2b_{r+1}.$$
\end{theorem}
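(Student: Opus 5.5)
We compute both barycenter coordinates by integrating over fibres of the projection $p\colon \bb{O}(W_{\vec{\bullet}})\to\Delta_{\operatorname{Supp}}$ and compare with the integral formulas already established in the proof of Lemma \ref{GeneralizedLocalDeltaOnCurve} and in Lemma \ref{DegreeOfFixed}. Write $\ell(\gamma):=\alpha+\vec{\beta}\cdot\gamma=\deg((1,\gamma)\cdot\vec{L})$ and $g(\gamma)=\ell(\gamma)-f(\gamma)$, as in the proof of Lemma \ref{GeneralizedLocalDeltaOnCurve}. By the explicit description (\ref{construction of O(W)}), the fibre of $\bb{O}(W_{\vec{\bullet}})$ over $\gamma\in\operatorname{Int}\Delta_{\operatorname{Supp}}$ is the interval $[f(\gamma),\ell(\gamma)]$, of length $g(\gamma)$. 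By Fubini, $\operatorname{vol}(\bb{O})=\int_{\Delta_{\operatorname{Supp}}}g(\gamma)\,d\gamma$. This equals $\operatorname{vol}(\Delta(W_{\vec{\bullet}}))$ by Proposition \ref{proposition: Okounkov body v.s O(W)}, and equals $\operatorname{vol}(W_{\vec{\bullet}})/(r+1)!$; it also uses $\operatorname{vol}_{W_{\vec{\bullet}}}(\gamma)=g(\gamma)$. The boundary of $\Delta_{\operatorname{Supp}}$ has measure zero, so passing to the closure is harmless.

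\emph{First identity.} For $1\le i\le r$ we have $b_i=\frac{1}{\operatorname{vol}(\bb{O})}\int_{\Delta_{\operatorname{Supp}}}\gamma_i\, g(\gamma)\,d\gamma$. Hence
\[
\alpha+\sum_i\beta_i b_i=\frac{1}{\operatorname{vol}(\bb{O})}\int_{\Delta_{\operatorname{Supp}}}\ell(\gamma)g(\gamma)\,d\gamma,
\]
using $\alpha=\frac{1}{\operatorname{vol}(\bb{O})}\int \alpha\, g\,d\gamma$. This is exactly the formula for $\deg c_1(W_{\vec{\bullet}})$ recorded in the proof of Lemma \ref{GeneralizedLocalDeltaOnCurve}. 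If one wants to be careful, that formula is justified as in \cite[Lemma-Definition 2.14]{AZ1}: the class $c_1(W_{m,\vec{\bullet}})/(mN_m)$ is the average of $(1,\vec a/m)\cdot\vec L$ weighted by $\dim W_{m,\vec a}$. By the weak convergence of $\rho_m$ to Lebesgue measure on the Okounkov body, this average tends to the barycenter average of $\ell$. Alternatively, one can invoke Proposition \ref{proposition: Okounkov body v.s O(W)} directly, since the first $r$ barycenter coordinates of $\Delta$ and $\bb{O}$ agree.

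\emph{Second identity.} Integrating the last coordinate over each fibre gives
\[
b_{r+1}=\frac{1}{\operatorname{vol}(\bb{O})}\int_{\Delta_{\operatorname{Supp}}}\int_{f(\gamma)}^{\ell(\gamma)}z\,dz\,d\gamma=\frac{1}{2\operatorname{vol}(\bb{O})}\int_{\Delta_{\operatorname{Supp}}}g(\gamma)\bigl(\ell(\gamma)+f(\gamma)\bigr)\,d\gamma,
\]
since $\ell^2-f^2=(\ell-f)(\ell+f)$. Therefore
\[
2b_{r+1}=\frac{1}{\operatorname{vol}(\bb{O})}\int g\ell\,d\gamma+\frac{1}{\operatorname{vol}(\bb{O})}\int g f\,d\gamma.
\]
The first term is $\deg c_1(W_{\vec{\bullet}})$ by the previous step. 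For the second term, Lemma \ref{DegreeOfFixed} with $\operatorname{vol}_{W_{\vec{\bullet}}}=g$ and $\operatorname{vol}(W_{\vec{\bullet}})=(r+1)!\operatorname{vol}(\bb{O})$ identifies it with $\lim_m\deg F_m$.

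\emph{Main obstacle.} The only delicate point is justifying the fibre description and the identities $\operatorname{vol}_{W_{\vec{\bullet}}}(\gamma)=g(\gamma)$ and $\operatorname{vol}(\bb{O})=\operatorname{vol}(\Delta(W_{\vec{\bullet}}))$. Two facts are needed here: the closure in (\ref{construction of O(W)}) adds only a null set, and $f$ is integrable. The latter holds because $f$ is convex and bounded by $T$, as shown in the proof of Lemma \ref{DegreeOfFixed}. All of these inputs are supplied by Proposition \ref{proposition: Okounkov body v.s O(W)} and the proofs of Lemmas \ref{DegreeOfFixed} and \ref{lemma: condition 2 of almost complete}, so the argument reduces to the Fubini computations above.
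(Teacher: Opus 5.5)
Your proposal is correct and follows essentially the same route as the paper. Both arguments rest on Fubini over the fibres $[f(\gamma),\ell(\gamma)]$, the identity $\operatorname{vol}_{W_{\vec{\bullet}}}=\ell-f$ coming from equation (\ref{eq1}), the weak-convergence formula for $\deg c_1(W_{\vec{\bullet}})$, and Lemma \ref{DegreeOfFixed} for the fixed part. The only difference is cosmetic: you factor $\ell^2-f^2=g(\ell+f)$ directly, whereas the paper expands $uf=\ell(\ell-u)-f^2$ to get $\lim_m\deg F_m=2b_{r+1}-\deg c_1(W_{\vec{\bullet}})$ and then adds the two identities.
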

\begin{proof}
   Let $Q$ be a very general point on $C$, let $W_{\vec{\bullet}}^Q$ be the refinement of $W_{\vec{\bullet}}$ by $Q$, and $\Delta(W_{\vec{\bullet}})$ be the Okounkov body of $W_{\vec{\bullet}}$ with respect to the admissible flag $C\ni Q$ as in Proposition \ref{proposition: Okounkov body v.s O(W)}. We write~$\Delta=~\Delta(W_{\vec{\bullet}}), \Delta_{\operatorname{Supp}}=\Delta_{\operatorname{Supp}(W_{\vec{\bullet}})}$ and $\bb{O}=\bb{O}(W_{\vec{\bullet}})$ for simplicity. Then we know that $\operatorname{vol}(\Delta)=\operatorname{vol}(\bb{O})$.
   
    By definition we know that $$c_1(W_{\vec{\bullet}})=\lim_{m\rightarrow \infty}\frac{\sum_{\vec{a}\in\bb{N}^{r}}[mL_0+\vec{a}\cdot \vec{L}]\cdot h^0(W_{m,\vec{a}})}{mh^0(W_{m,\vec{\bullet}})}\in \operatorname{Pic}(C)_{\bb{R}},$$ where $\vec{L}=(L_1, \dots, L_r)$. 

    Denote $\Gamma=\{\vec{a}\in\bb{N}^{r+1}\mid W_{\vec{a}}\neq 0\}$ and $\Gamma_m=\Gamma\cap (\{m\}\times \bb{N}^{r})$. We can rewrite the equation as follows.
    \begin{equation}\label{degree of c_1}
    \begin{aligned}
        \deg c_1(W_{\vec{\bullet}})&=\lim_{m\rightarrow \infty}\frac{\sum_{\vec{a}\in\bb{N}^{r}}(m\alpha+\vec{a}\cdot \vec{\beta})\cdot h^0(W_{m,\vec{a}})}{mh^0(W_{m,\vec{\bullet}})}\\
        &=\lim_{m\rightarrow\infty}\sum_{\gamma\in \frac{1}{m}\Gamma_m}(\alpha+\gamma\cdot \vec{\beta})\cdot h^0(W_{m,m\gamma})\cdot \frac{m^{r+1}}{h^0(W_{m,\vec{\bullet}})}\cdot \frac{1}{m^{r+1}}\\
        &=\frac{(r+1)!}{\operatorname{vol}(W_{\vec{\bullet}})}\lim_{m\rightarrow\infty}\frac{1}{m^{r+1}}\sum_{\gamma\in \frac{1}{m}\Gamma_m}(\alpha+\gamma\cdot \vec{\beta})\cdot h^0(W_{m,m\gamma})=\frac{1}{\operatorname{vol}(\Delta)}\lim_{m\rightarrow\infty}\int_{\Delta}(\alpha+\gamma\cdot \vec{\beta}) d\rho_m\\
        &=\frac{1}{\operatorname{vol}(\Delta)}\left(\int_{\Delta}\alpha d\rho + \sum_{i=1}^r\int_{\Delta}\beta_ix_i d\rho\right)=\alpha+\sum_{i=1}^r\beta_i b_i.
    \end{aligned}
    \end{equation}
    The last equality holds because of the observation that the first $r$ coordinates of the barycenters of $\Delta$ and~$\bb{O}$ are the same.
    
    For any $\gamma\in\operatorname{Int}(\Delta_{\operatorname{Supp}})\cap \bb{Q^r}$, we have the decomposition $W_{m,m\gamma}=M_{m,m\gamma}+F_{m,m\gamma}$ of $W_{m,m\gamma}$ into movable and fixed parts, for all sufficiently divisible $m$. We know $M_{m,m\gamma}\subset \lvert mL_0+\sum_{i=1}^r m\gamma_iL_i-F_{m,m\gamma}\rvert$. Denote the degree of this divisor by $\deg M_{m,m\gamma}$. Then, from the proof of Lemma \ref{lemma: condition 2 of almost complete}, we see 
    \begin{align*}
        \deg F_{m,m\gamma}&=m\alpha+m\gamma\cdot\vec{\beta}-\deg M_{m,m\gamma}=m\alpha+m\gamma\cdot\vec{\beta}-\dim M_{m,m\gamma}+o(m)\\
        &=m\alpha+m\gamma\cdot\vec{\beta}-\dim W_{m,m\gamma}+o(m). 
    \end{align*}
    Therefore $$f(\gamma)=\lim_{m\rightarrow\infty}\frac{\deg F_{m,m\gamma}}{m}=\alpha+\gamma\cdot \vec{\beta}-\operatorname{vol}_{W_{\vec{\bullet}}}(\gamma).$$ From Definition \ref{definition: f(gamma)}, we see $f(\gamma)$ can be extended to a convex function on $\operatorname{Int}(\Delta_{\operatorname{Supp}})$. For simplicity we denote $u(\gamma)=\operatorname{vol}_{W_{\vec{\bullet}}}(\gamma)$. Then from Lemma \ref{DegreeOfFixed} we have
    \begin{equation}\label{degree of F_m}
    \begin{aligned}
        \lim_{m\rightarrow\infty}&\deg F_m\left(W_{\vec{\bullet}}\right)=\frac{(r+1)!}{\operatorname{vol}(W_{\vec{\bullet}})}\int_{\Delta_{\operatorname{Supp}}} u(\gamma)f(\gamma)d\gamma=\frac{(r+1)!}{\operatorname{vol}(W_{\vec{\bullet}})}\int_{\Delta_{\operatorname{Supp}}}\left(\alpha+\gamma\cdot \vec{\beta}-f\left(\gamma\right)\right)\cdot f(\gamma)d\gamma\\
        &=\frac{(r+1)!}{\operatorname{vol}(W_{\vec{\bullet}})}\int_{\Delta_{\operatorname{Supp}}} \left(\alpha+\gamma\cdot \vec{\beta}\right)f(\gamma)-\left(f(\gamma)\right)^2 d\gamma\\
        &=\frac{(r+1)!}{\operatorname{vol}(W_{\vec{\bullet}})}\int_{\Delta_{\operatorname{Supp}}} \left(\alpha+\gamma\cdot\vec{\beta}\right)\cdot \left(\alpha+\gamma\cdot\vec{\beta}-u\left(\gamma\right)\right) -\left(f(\gamma)\right)^2 d\gamma\\
        &=\frac{1}{\operatorname{vol}(\bb{O})}\int_{\Delta_{\operatorname{Supp}}} \left(\alpha+\gamma\cdot\vec{\beta}\right)^2-\left(\alpha+\gamma\cdot\vec{\beta}\right)\cdot u(\gamma)-\left(f(\gamma)\right)^2d\gamma\\
        &=\frac{2}{\operatorname{vol}(\bb{O})}\int_{\Delta_{\operatorname{Supp}}} \frac{1}{2} \left(\left(\alpha+\gamma\cdot\vec{\beta}\right)^2-\left(f\left(\gamma\right)\right)^2\right)d\gamma-\frac{\alpha}{\operatorname{vol}(\bb{O})}\int_{\Delta_{\operatorname{Supp}}}u(\gamma)d\gamma-\sum_{i=1}^r \frac{\beta_i}{\operatorname{vol}(\bb{O})}\int_{\Delta_{\operatorname{Supp}}}\gamma_i u(\gamma)d\gamma\\
        &=2b_{r+1}-\alpha-\sum_{i=1}^r\beta_i b_i.
    \end{aligned}
    \end{equation}

    The last equality holds since by definition 
    $$b_{r+1}=\frac{1}{\operatorname{vol}(\bb{O})}\int_{\bb{O}}z dx=\frac{1}{\operatorname{vol}(\bb{O})}\int_{\Delta_{\operatorname{Supp}}}\left(\int_{f(\gamma)}^{\alpha+\vec{\beta}\cdot \gamma}z dz\right)d\gamma=\frac{1}{\operatorname{vol}(\bb{O})}\int_{\Delta_{\operatorname{Supp}}}\frac{1}{2} \left(\left(\alpha+\gamma\cdot\vec{\beta}\right)^2-f^2\left(\gamma\right)\right)d\gamma,$$ and $$b_i=\frac{1}{\operatorname{vol}(\bb{O})}\int_{\bb{O}}x_i dx=\frac{1}{\operatorname{vol}(\bb{O})}\int_{\Delta_{\operatorname{Supp}}}\left(\int_{f(\gamma)}^{\alpha+\vec{\beta}\cdot \gamma}\gamma_i dz\right)d\gamma=\frac{1}{\operatorname{vol}(\bb{O})}\int_{\Delta_{\operatorname{Supp}}}\gamma_iu(\gamma)d\gamma$$ for all $1\leq i\leq r$. Here we denote $x=(\gamma, z)=(\gamma_1, \dots, \gamma_r, z)$ a point in $\bb{O}(W_{\vec{\bullet}})$.

    Then adding up equations (\ref{degree of c_1}) and (\ref{degree of F_m}), we see that $\deg c_1(W_{\vec{\bullet}})+\lim_{m\rightarrow \infty}\deg F_{m}=2b_{r+1}$.
\end{proof}

\subsection{Stability Threshold Along Positive Dimensional Closed Subset}\label{subsection: stability threshold along positive dimensional subset}
In this subsection we see how Subsection \ref{Stability Threshold Along Several Points} can be applied to justify the first condition of Lemma \ref{criterionInAZ}. Throughout this subsection, we consider the following settings. 

Let $X$ be a smooth variety. Let $L$ be an ample line bundle on $X$ and $V_{\bullet}=\oplus_{m}H^0(X,mL)$ be the complete linear series associated to $L$. Consider the admissible flag $$Y_{\bullet}: X=Y_0\supset Y_1\supset \dots \supset Y_{n-1}$$ on $X$ such that each $Y_i$ is normal, and is a Cartier divisor on $Y_{i-1}$ and $Y_{n-1}$ is a smooth curve. Let $W_{\vec{\bullet}}^{(i)}$ be the refinement of $V_{\bullet}$ by $Y_{\bullet}^{(i)} : X=Y_0\supset Y_1\supset\cdots \supset Y_i$ for $1\leq i\leq n-1$ and denote $W_{\vec{\bullet}}^{(0)}=V_{\bullet}$. Then each $W_{\vec{\bullet}}^{(i)}$ is an $\bb{N}\times \bb{N}^{i}$-graded linear series on $Y_i$, it has bounded support and contains an ample series. Denote $\bb{O}=\bb{O}(W_{\vec{\bullet}}^{(n-1)})$ and let $(b_1,\dots, b_n)$ be its barycenter.

We can combine the Abban--Zhuang method in Theorem \ref{AZ Admissible Flag} and the previous theorem to obtain the following.

\begin{corollary}\label{abban-zhuang in terms of barycenter}
    With the above convention, let $Z\subset X$ be an irreducible closed subset such that the set-theoretic intersection $Z\cap Y_{n-1}$ consists of $k$ distinct closed points. Then we have 
    \begin{align}\label{abban-zhuang stops at a curve}
        \delta_Z(X;L)\geq \min\left\{\frac{1}{b_1},\dots, \frac{1}{b_{n-1}},\frac{k}{b_n}\right\}.
    \end{align}
\end{corollary}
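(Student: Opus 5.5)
Apply Theorem \ref{AZ Admissible Flag} to the flag $Y_\bullet$ of length $\ell=n-1$, with $\mathcal{F}$ the filtration on $W^{(n-1)}_{\vec{\bullet}}$ induced by a very general point $Q\in Y_{n-1}$ as in Lemma \ref{GeneralizedLocalDeltaOnCurve}. This gives
$$\delta_Z(X;L)\geq\min\Big\{\min_{0\le i\le n-2}\frac{1}{S(W^{(i)}_{\vec{\bullet}};Y_{i+1})},\ \delta_{Z\cap Y_{n-1}}(Y_{n-1};W^{(n-1)}_{\vec{\bullet}},\mathcal{F})\Big\}.$$
The hypotheses are met: each $Y_i$ is normal and Cartier in $Y_{i-1}$, and the $Y_i$ should be klt (smooth along the relevant locus, or at least as required by Lemma \ref{lemma: induction step of AZ}). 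There is one technical point. Theorem \ref{AZ Admissible Flag} is stated for irreducible $Z$, but the last term is taken along the reducible set $Z\cap Y_{n-1}=\{p_1,\dots,p_k\}$. This is exactly the form produced by iterating Lemma \ref{lemma: induction step of AZ}, which permits reducible closed subsets, so no extra work is needed.

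The last term is handled by Lemma \ref{GeneralizedLocalDeltaOnCurve}, applied on the smooth curve $C=Y_{n-1}$ with the $\mathbb{N}\times\mathbb{N}^{n-1}$-graded series $W^{(n-1)}_{\vec{\bullet}}$, so $r=n-1$. It gives a lower bound $2k/(\deg c_1+\lim\deg F_m)$. By Theorem \ref{read info from barycenter of convex body} the denominator is $2b_n$, so the bound equals $k/b_n$.

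The main work is identifying $S(W^{(i)}_{\vec{\bullet}};Y_{i+1})$ with $b_{i+1}$. I would argue via Okounkov bodies. Extend $Y_\bullet$ by the point $Q$ to a full flag of length $n$. Let $\Delta\subset\mathbb{R}^n$ be the Okounkov body of $V_\bullet$ with respect to this flag; it coincides with $\Delta(W^{(n-1)}_{\vec{\bullet}})$ after the grading shift, since iterated refinement only records the successive orders of vanishing. The filtration of $W^{(i)}_{\vec{\bullet}}$ by $\operatorname{ord}_{Y_{i+1}}$ corresponds to the $(i+1)$-th coordinate of the valuation vector. By the standard formula (\cite{boucksom2014corps}; Remark \ref{volume function and compute S}), $S(W^{(i)}_{\vec{\bullet}};Y_{i+1})$ equals the average of $x_{i+1}$ over the Okounkov body of $W^{(i)}_{\vec{\bullet}}$. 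The key observation is that refinement leaves the coordinates unchanged. The Okounkov body of $W^{(i)}_{\vec{\bullet}}$ relative to the flag $Y_{i+1}\supset\cdots\supset Q$ is $\Delta$ itself: the first $i$ coordinates are the grading coordinates and the rest are the flag coordinates. Hence $S(W^{(i)}_{\vec{\bullet}};Y_{i+1})$ is the $(i+1)$-th barycenter coordinate of $\Delta$.

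For $i+1\le n-1$, Proposition \ref{proposition: Okounkov body v.s O(W)} says this coordinate equals $b_{i+1}$, since $\Delta$ and $\mathbb{O}$ share their first $n-1$ barycenter coordinates. This gives the terms $1/b_1,\dots,1/b_{n-1}$ and proves (\ref{abban-zhuang stops at a curve}).

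The step I expect to be delicate is the compatibility of Okounkov bodies under refinement: the normalization of the volume and the fact that the barycenter formula for $S$ holds in the multigraded setting. I would justify the barycenter formula by writing $S$ via the measures $\rho_m$, as in the computation (\ref{degree of c_1}), using the lexicographic filtration description in Example \ref{refinement} and weak convergence of $\rho_m$ to Lebesgue measure.
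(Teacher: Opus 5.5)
Your proposal is correct and follows essentially the same route as the paper. You apply Theorem \ref{AZ Admissible Flag} with the filtration induced by a very general $Q\in Y_{n-1}$. You then identify $S(W^{(i)}_{\vec{\bullet}};Y_{i+1})$ with $b_{i+1}$ by writing $S_m$ as an integral of $x_{i+1}$ against $\rho_m$ over the Okounkov body of $W^{(n-1)}_{\vec{\bullet}}$ with respect to $Y_{n-1}\ni Q$, passing to the limit, and invoking Proposition \ref{proposition: Okounkov body v.s O(W)}. Finally, you bound the last term by $k/b_n$ via Lemma \ref{GeneralizedLocalDeltaOnCurve} and Theorem \ref{read info from barycenter of convex body}, exactly as the paper does.
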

\begin{proof}
    Let $Q\in Y_{n-1}$ be a very general point on $Y_{n-1}$, in the sense of Lemma \ref{GeneralizedLocalDeltaOnCurve}. Explicitly, let $Q\not\in \operatorname{Supp}F_m(W_{\vec{\bullet}}^{(n-1)})$ for any $m\in \bb{N}$ and $Q\not\in Z\cap Y_{n-1}$. Let $\mathcal{F}$ be the filtration on $W_{\vec{\bullet}}^{(n-1)}$ induced by $Q$. We inductively lift $\mathcal{F}$ to each $W_{\vec{\bullet}}^{(i)}, 0\leq i\leq n-2$, and also denote the filtration by $\mathcal{F}$. For the lift of filtration, see Equation (\ref{eq: lift of filtration}). Then from Theorem \ref{AZ Admissible Flag} we know that $$\delta_Z(X;L)=\delta_Z(X;V_{\bullet},\mathcal{F})\geq \min\left\{ \min_{0\leq i\leq n-2}\left\{  \frac{A_{Y_i}(Y_{i+1})}{S(W_{\vec{\bullet}}^{(i)};Y_{i+1})} \right\}, \delta_{Z\cap Y_{n-1}}(Y_{n-1};W_{\vec{\bullet}}^{(n-1)},\mathcal{F})   \right\}.$$

    Firstly we look at the first $n-1$ terms. Since each $Y_{i+1}$ is a divisor on $Y_i$, the log discrepancy~$A_{Y_i}(Y_{i+1})=1$. We now show that $S(W_{\vec{\bullet}}^{(i)};Y_{i+1})=b_{i+1}$, for all $0\leq i\leq n-2$.

    From the definition, it is not hard to see that $S_m(W_{\vec{\bullet}}^{(i)};Y_{i+1})=\operatorname{ord}_{Y_{i+1}}(D)$ for an arbitrary $m$-basis type $\Q$-divisor $D$ of $W_{\vec{\bullet}}^{(i)}$ that is compatible with the filtration on $W_{\vec{\bullet}}^{(i)}$ induced by $Y_{i+1}$. Such $\Q$-divisor $D$ is of the form $D=\frac{1}{mh^0(W_{m,\bullet}^{(i)})}\sum_{\vec{a}\in\bb{N}^{i}} D_{\vec{a}}$, where $D_{\vec{a}}=\sum_{j=1}^{\dim W_{m,\vec{a}}^{(i)}} \operatorname{div}(s_j)$ such that $\{s_1, \dots, s_{\dim W_{m,\vec{a}}^{(i)}}\}$ form a basis of $W_{m,\vec{a}}^{(i)}$ that is compatible with the filtration induced by $Y_{i+1}$. There are exactly $\dim W_{m,\vec{a},j}^{(i+1)}$ of them whose vanishing order along $Y_{i+1}$ is $j$, for any $j\in\bb{N}$. Therefore $\operatorname{ord}_{Y_{i+1}}(D_{\vec{a}})=\sum_{j=0}^{\infty}j\cdot \dim W_{m,\vec{a},j}^{(i+1)}$. Note this is actually a finite sum since $W_{\vec{\bullet}}^{(i+1)}$ has bounded support. Then we have 
    \begin{align*}
        S_m(W_{\vec{\bullet}}^{(i)};Y_{i+1})&=\operatorname{ord}_{Y_{i+1}}(D)=\frac{1}{mh^0(W_{m,\bullet}^{(i)})}\sum\limits_{\vec{a}\in\bb{N}^{i}}\sum\limits_{j=0}^{\infty}j\cdot \dim W_{m,\vec{a},j}^{(i+1)}\\
        &=\frac{1}{mh^0(W_{m,\bullet}^{(i+1)})}\sum_{\vec{a}\in\bb{N}^{i+1}}a_{i+1}\cdot \dim W^{(i+1)}_{m,\vec{a}},
    \end{align*} 
    where $a_{i+1}$ is the $(i+1)$-th coordinate of the vector $\vec{a}\in\bb{N}^{i+1}$. For the last equation we also use the fact~$h^0(W_{m,\bullet}^{(j)})=\dim V_m$ for all $0\leq j\leq n-1$.
    
    Next, note that $\dim W^{(i+1)}_{m,\vec{a}}=\sum_{\vec{b}\in \bb{N}^{\ell -i-1}}\dim W^{(\ell)}_{m,\vec{a},\vec{b}}$ for any $\vec{a}\in\bb{N}^{i+1}$ and any $\ell\geq i+1$. If $i+1\leq n-1$, taking $\ell=n-1$, we derive
    \begin{align*}
        S_m(W_{\vec{\bullet}}^{(i)};Y_{i+1})&=\frac{1}{mh^0(W_{m,\bullet}^{(i+1)})}\sum_{\vec{a}\in\bb{N}^{i+1}}a_{i+1}\cdot \left(\sum_{\vec{b}\in \bb{N}^{n-i-2}}\dim W^{(n-1)}_{m,\vec{a},\vec{b}}\right)\\
        &=\frac{1}{mh^0(W_{m,\bullet}^{(n-1)})}\sum_{\vec{a}\in\bb{N}^{n-1}}a_{i+1}\cdot \dim W^{(n-1)}_{m,\vec{a}}.
    \end{align*}
    
    Reformulating the equation, we obtain
    $$S_m(W_{\vec{\bullet}}^{(i)};Y_{i+1})=\frac{m^n}{h^0(W_{m,\bullet}^{(n-1)})}\cdot \frac{1}{m^n}\sum_{\vec{a}\in\bb{N}^{n-1}}\frac{a_{i+1}}{m}\cdot \dim W^{(n-1)}_{m,\vec{a}}=\frac{m^n}{h^0(W_{m,\bullet}^{(n-1)})}\int_{\Delta}x_{i+1} d\rho_m,$$
    for all $0\leq i\leq n-2$. Here $\Delta$ is the Okounkov body of $W_{\vec{\bullet}}^{(n-1)}$ with respect to the flag $C\ni Q$, similar to Theorem \ref{read info from barycenter of convex body}.
    Therefore $$S(W_{\vec{\bullet}}^{(i)};Y_{i+1})=\lim_{m\rightarrow\infty}S_m(W_{\vec{\bullet}}^{(i)};Y_{i+1})=\frac{n!}{\operatorname{vol}(W_{\vec{\bullet}}^{(n-1)})}\int_{\Delta}x_{i+1}d\rho=\frac{1}{\operatorname{vol}(\Delta)}\int_{\Delta}x_{i+1}d\rho=b_{i+1},$$
    for all $0\leq i\leq n-2$. The last equality holds since the first $n-1$ coordinates of the barycenter of $\Delta$ coincide with those of $\bb{O}$, as we remarked in the proof of Theorem \ref{read info from barycenter of convex body}.

    Then we look at the last term. Since $Y_{n-1}$ is a smooth curve and $Z\cap Y_{n-1}$ is a collection of $k$ distinct closed points on $Y_{n-1}$, apply Lemma \ref{GeneralizedLocalDeltaOnCurve} and Theorem \ref{read info from barycenter of convex body} to get $$\delta_{Z\cap Y_{n-1}}(Y_{n-1};W_{\vec{\bullet}}^{(n-1)},\mathcal{F})\geq \frac{2k}{\deg c_1\left(W_{\vec{\bullet}}^{(n-1)}\right)+\lim_{m\rightarrow\infty}\deg F_{m}\left(W_{\vec{\bullet}}^{(n-1)} \right)}=\frac{2k}{2b_{n}}=\frac{k}{b_{n}}.$$This completes the proof.
\end{proof}
\begin{remark} 
    Recall the widely recognized result that, for a projective variety $X$ with at worst klt singularities, an ample $\bb{Q}$-line bundle $L$ on $X$, and an admissible flag $Y_{\bullet}:X=Y_0\supset Y_1\supset\dots \supset Y_{n-1}\ni Y_n$ with $n=\dim X$ and each $Y_i$ is a Cartier divisor on $Y_{i-1}$, we have $\delta_{Y_n}(X;L)\geq \min\left\{\frac{1}{b_1}, \dots, \frac{1}{b_n} \right\}.$ Here~$(b_1, \dots, b_n)$ is the barycenter of the Okounkov body of $L$ associated to $Y_{\bullet}$. This is the case $r=1$ in \autocite[Theorem 11.15]{Fujita_IMRN}. 

    Just as the convex body $\bb{O}(W_{\vec{\bullet}})$ constructed in (\ref{construction of O(W)}) can be viewed as an analogue of Okounkov body (see Proposition \ref{proposition: Okounkov body v.s O(W)}), Corollary \ref{abban-zhuang in terms of barycenter} can be viewed as an analogue of this classical result. 
    
    As we noted before, the first $n-1$ coordinates of the barycenters of the two convex bodies are the same. For the last coordinate, the one derived from $\bb{O}(W_{\vec{\bullet}})$ is clearly larger, however the numerator also increases from $1$ to $k$. Hence sometimes it may provide a better lower bound.  
\end{remark}

However, usually it is difficult to depict the convex body $\bb{O}(W_{\vec{\bullet}}^{(n-1)})$ and to compute, or to estimate, its barycenter, especially when $\dim X=n$ is high. We show that when the first few members of the admissible flag are proportional to the given ample line bundle $L$, we can reduce to the information of a convex body of lower dimension.

First we notice the following induction formula observed in \autocite[\S 3.1]{AZ1}:

Let $X$ be a projective variety with $Y\subset X$ an irreducible Cartier divisor. Let $V_{\vec{\bullet}}$ be a multi-graded linear series on $X$ and $W_{\vec{\bullet}}$ be its refinement by $Y$. Let $\mathcal{F}$ be a filtration indexed by some $\bb{R}^m$ on $W_{\vec{\bullet}}$. We also denote its lift by $\mathcal{F}$, which is an $\bb{R}^{m+1}$-indexed filtration on $V_{\vec{\bullet}}$. If $D$ is an $m$-basis type $\Q$-divisor of~$V_{\vec{\bullet}}$ compatible with $\mathcal{F}$, we have $$D=S_m(V_{\vec{\bullet}};Y)\cdot Y+\Gamma,$$ where $\Gamma\vert_Y$ is an $m$-basis type $\Q$-divisor of $W_{\vec{\bullet}}$ compatible with $\mathcal{F}$. Letting $m\rightarrow\infty$, we obtain 
\begin{equation}\label{induction formula 2}
    c_1(W_{\vec{\bullet}})=\left(c_1\left(V_{\vec{\bullet}}\right)-S\left(V_{\vec{\bullet}};Y\right)\cdot Y\right)\vert_{Y}.
\end{equation}

\begin{proposition}\label{reduce to lower dimension}
    Notation and assumptions as above. Assume that there exists some $0<\ell\leq n-2$ such that $Y_i\sim m_iL|_{Y_{i-1}}$ for all $1\leq i\leq \ell$ and some $m_i\in \bb{Z}_{>0}$. Set $Y=Y_{\ell}$, and $$Y^{\prime}_{\bullet} : Y=Y_\ell\supset Y_{\ell+1} \supset \dots \supset Y_{n-1}$$ be the induced admissible flag on $Y$. Let $U_{\bullet}^{(0)}:=\oplus_m H^0(Y, mL|_Y)$ be the complete linear series associated to $L|_Y$, and $U^{(i)}_{\vec{\bullet}}$ be the refinement of $U_{\bullet}^{(0)}$ by $Y^{\prime (i)}_{\bullet} : Y=Y_{\ell}\supset \dots \supset Y_{\ell+i}$. Let $B'=(b_1^{\prime}, \dots, b_{n-\ell}^{\prime})$ be the barycenter of $\bb{O}\left(U_{\vec{\bullet}}^{(n-\ell-1)}\right)$, then $$\delta_Z(X;L)\geq \min\left\{ (n+1)\cdot \min_{0\leq i< \ell}\left\{  m_{i+1} \right\},\quad\frac{n+1}{n-\ell+1}\cdot \min_{1\leq i\leq n-\ell-1}\left\{  \frac{1}{b_i^{\prime}} \right\},\quad\frac{(n+1)}{(n-\ell+1)}\cdot \frac{k}{b_{n-\ell}^{\prime}}   \right\}.$$
    As before, $Z\subset X$ is an irreducible closed subset such that the set-theoretic intersection $Z\cap Y_{n-1}$ consists of $k$ distinct closed points.
\end{proposition}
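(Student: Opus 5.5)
The plan is to apply Corollary \ref{abban-zhuang in terms of barycenter} to the full flag $Y_\bullet$, and then to express the barycenter $(b_1,\dots,b_n)$ of $\bb{O}=\bb{O}(W_{\vec{\bullet}}^{(n-1)})$ in terms of the barycenter $B'$ of $\bb{O}(U_{\vec{\bullet}}^{(n-\ell-1)})$. Concretely, I aim to prove
$$b_i=\frac{1}{(n+1)m_i}\quad (1\leq i\leq \ell),\qquad b_{\ell+j}=\frac{n-\ell+1}{n+1}\,b'_j\quad (1\leq j\leq n-\ell).$$
Substituting these into \eqref{abban-zhuang stops at a curve} gives exactly the three groups of terms in the statement. (The statement's indexing of $m_{i+1}$ over $0\le i<\ell$ is the same as $m_i$ over $1\le i\le\ell$.)

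\textbf{Step 1: structure of the first $\ell$ refinements.} Since $Y_1\sim m_1L$ and $L$ is ample, the refinement of $V_\bullet$ by $Y_1$ satisfies
$$W^{(1)}_{m,j}=\operatorname{Im}\bigl(H^0(X,(m-m_1j)L)\to H^0(Y_1,(m-m_1j)L|_{Y_1})\bigr).$$
For $m-m_1j\gg0$ this restriction is surjective by Serre vanishing. The failure near the boundary $m-m_1j=O(1)$ is negligible in all the limits, as in Example \ref{ample is almost complete}. Iterating over $i\le \ell$, I expect that, up to a set of indices of measure zero in the limit,
$$W^{(\ell)}_{m,a_1,\dots,a_\ell}=H^0\bigl(Y,\,sL|_Y\bigr),\qquad s=m-\textstyle\sum_i m_ia_i\geq 0.$$
Refining further by $Y'_\bullet$ then identifies $W^{(n-1)}_{m,\vec a,\vec b}$ with $U^{(n-\ell-1)}_{s,\vec b}$. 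The fixed parts also scale accordingly: the function $f$ of Definition \ref{definition: f(gamma)} for $W^{(n-1)}$ at $(\vec a/m,\vec b/m)$ equals $\sigma f_U(\vec b/(m\sigma))\cdot$ with $\sigma=s/m$, by homogeneity of $f$ along rays. Since the degrees $\alpha,\vec\beta$ are also linear, this yields the description
$$\bb{O}=\Bigl\{(x_1,\dots,x_\ell,\,\sigma y)\ \Bigm|\ x_i\ge0,\ \sigma=1-\textstyle\sum m_ix_i\ge0,\ y\in \bb{O}(U_{\vec{\bullet}}^{(n-\ell-1)})\Bigr\},$$
where the extra $\bb{N}$-grading coordinates are handled via the conventions of Example \ref{refinement}. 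So $\bb{O}$ is a "cone" over $\bb{O}(U)$ fibred over a simplex.

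\textbf{Step 2: barycenter computation.} The fibre over $(x_1,\dots,x_\ell)$ is $\sigma\cdot\bb{O}(U)$, which has dimension $n-\ell$ and volume $\sigma^{n-\ell}\operatorname{vol}\bb{O}(U)$. Substitute $t_i=m_ix_i$. Then $(t_1,\dots,t_\ell,\sigma)$ carries the Dirichlet$(1,\dots,1,n-\ell+1)$ distribution on the standard simplex. Hence $\mathbb{E}[t_i]=1/(n+1)$, which gives $b_i=1/((n+1)m_i)$. Likewise
$$\mathbb{E}[\sigma]=\frac{B(n-\ell+2,\ell)}{B(n-\ell+1,\ell)}=\frac{n-\ell+1}{n+1}.$$
Since the remaining coordinates equal $\sigma y$ with $y$ averaged over $\bb{O}(U)$ independently of $\sigma$, we get $b_{\ell+j}=\frac{n-\ell+1}{n+1}b'_j$.

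The last coordinate $b_n$ requires care, because $\bb{O}$ is shifted by $f$ rather than starting at $0$. The scaling in Step 1 includes the shift, so the same factor applies to $b_n$ as to the other coordinates.

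\textbf{Main obstacle.} I expect the hardest part to be Step 1, namely rigorously justifying the identification of $\bb{O}(W^{(n-1)})$ with the cone over $\bb{O}(U)$. This includes the fixed-part function $f$, not just the Okounkov body. My first approach would be to compare the semigroups $\Gamma$ of Definition \ref{definition: construction of the convex bodt}. They agree on the region where $s\gg0$, using surjectivity of the restriction maps for high multiples of the ample $L$. Together with the inclusion $W^{(\ell)}\subseteq H^0(Y,sL|_Y)$, which holds always, this shows the two closed convex cones coincide. Once that is established, the barycenter computation is a routine Beta-integral.
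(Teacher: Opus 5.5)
Your proposal is correct and follows essentially the same route as the paper. Like the paper, you apply the flag estimate, identify $W^{(n-1)}_{m,\vec a,\vec b}$ with $U^{(n-\ell-1)}_{s,\vec b}$ for $s=m-\sum m_ia_i\gg0$ (so that $f$ and $\operatorname{vol}$ scale by $\sigma=1-\gamma'$), and compute the barycenter of $\bb{O}(W)$ as a body fibred over the simplex with fibres $\sigma\,\bb{O}(U)$ by a Beta-type integral. The only difference is bookkeeping: the paper gets the first $\ell$ terms from almost completeness and the induction formula, and the last coordinate via $2b_n=\deg c_1+\lim\deg F_m$, whereas you read every coordinate uniformly off the fibred body. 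Your uniform treatment is valid because the upper boundary $\alpha+\vec\beta\cdot\gamma$ also scales by $\sigma$, precisely since $\beta_i=-m_i\alpha$ for $i\le\ell$, and you should state this explicitly.
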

\begin{proof}
    Similarly as in the proof of Corollary \ref{abban-zhuang in terms of barycenter}, we have 
    $$\delta_Z(X;L)\geq \min\left\{ \min_{0\leq i< \ell}\left\{  \frac{A_{Y_i}(Y_{i+1})}{S(W_{\vec{\bullet}}^{(i)};Y_{i+1})} \right\},  \min_{\ell\leq i\leq n-2}\left\{  \frac{A_{Y_i}(Y_{i+1})}{S(W_{\vec{\bullet}}^{(i)};Y_{i+1})} \right\}, \delta_{Z\cap Y_{n-1}}(Y_{n-1};W_{\vec{\bullet}}^{(n-1)},\mathcal{F})   \right\}.$$

    Firstly, we look at the first $\ell$ terms. Note that $W_{\vec{\bullet}}^{(1)}$ is almost complete and $F(W_{\vec{\bullet}}^{(1)})=0$: by definition \begin{align*}
        W_{m,j}^{(1)}&=\operatorname{Im}\left(H^0(X,mL-jY)\rightarrow H^0(Y, (mL-jY)|_{Y}) \right)\\
        &=\operatorname{Im}\left(H^0(X,(m-jm_1)L)\rightarrow H^0(Y, ((m-jm_1)L)|_{Y})  \right),
    \end{align*} 
    and the map is surjective when $m-jm_1\gg 0$, since $H^1(X, (m-(j+1)m_1)L)$ vanishes. Therefore~$W_{m,j}^{(1)}=~\lvert(m-jm_1)L|_{Y}\rvert$ and $F_{m,j}=0$ when $m-jm_1\gg 0$, hence $F(W_{\vec{\bullet}}^{(1)})=0$. Inductively we know that $W_{\vec{\bullet}}^{(i)}$ is almost complete and $F(W_{\vec{\bullet}}^{(i)})=0$ for all $1\leq i\leq \ell$.
    
    We know that $c_1(V_{\bullet})=L$. Apply Lemma \ref{induction formula} and equation (\ref{induction formula 2}) iteratively, we can obtain $$c_1(W_{\vec{\bullet}}^{(i)})=\frac{n+1-i}{n+1}L|_{Y_{i}},\  S(W_{\vec{\bullet}}^{(i)}; Y_{i+1})=\frac{1}{m_{i+1}(n+1)},$$ for $0\leq i\leq \ell-1$, and $c_1(W_{\vec{\bullet}}^{(\ell)})=\frac{n+1-\ell}{n+1}L|_{Y_{\ell}}$.

    Since each $Y_{i+1}$ is a divisor on $Y_i$, the log discrepancy $A_{Y_i}(Y_{i+1})=1$ for all $i$. Therefore, when~$0\leq~i\leq~\ell-1$, we have $\frac{A_{Y_i}(Y_{i+1})}{S(W_{\vec{\bullet}}^{(i)};Y_{i+1})}=m_{i+1}(n+1)$.

    Next, we look at the middle $n-\ell-1$ terms.

    We denote $W_{\vec{\bullet}}=W_{\vec{\bullet}}^{(n-1)}, U_{\vec{\bullet}}=U_{\vec{\bullet}}^{(n-\ell-1)}$ for simplicity.
    
    It is straightforward to see that for any $\vec{a}=(a_0, a_1, \dots, a_{n-1})$,  when $a_0-\sum_{i=1}^\ell m_ia_i\gg 0$, we have $$W_{\vec{a}}=U_{a_0-\sum_{i=1}^\ell m_ia_i, a_{\ell+1},\dots, a_{n-1}}.$$ Conversely, we have $W_{\vec{a}}=0$ when $a_0-\sum_{i=1}^\ell m_ia_i<0$.
    
    For any $\gamma=(\gamma_1, \dots, \gamma_{n-1})\in\bb{Q}_{>0}^{n-1}$, denote $\gamma'=\sum_{i=1}^\ell m_i\gamma_i$ and $\tilde{\gamma}=(\gamma_{\ell+1}, \dots, \gamma_{n-1})$. Then $$\gamma\in\operatorname{Int}(\Delta_{\operatorname{Supp}(W_{\vec{\bullet}})})\cap \bb{Q}^{n-1}\iff \gamma^{\prime}\in (0,1) \text{\ and\ } \tilde{\gamma}/(1-\gamma^{\prime})\in\operatorname{Int}(\Delta_{\operatorname{Supp}(U_{\vec{\bullet}})})\cap \bb{Q}^{n-\ell-1},$$ and the fixed part $F_{m,m\gamma}(W_{\vec{\bullet}})=F_{m(1-\gamma^{\prime}),m\tilde{\gamma}}(U_{\vec{\bullet}})$.
  
    We define $f(\gamma)$ for all $\gamma\in\operatorname{Int}(\Delta_{\operatorname{Supp}(W_{\vec{\bullet}})})$ as in the proof of Lemma \ref{DegreeOfFixed}. Similarly, we define $f^U(\delta)$ for~$\delta\in \operatorname{Int}(\Delta_{\operatorname{Supp}(U_{\vec{\bullet}})})$. Then 
    \begin{align*}
        f(\gamma)&=\lim\limits_{m\rightarrow\infty}\frac{\deg F_{m,m\gamma}(W_{\vec{\bullet}})}{m}=\lim\limits_{m\rightarrow\infty}\frac{\deg F_{m(1-\gamma^{\prime}),m\tilde{\gamma}}(U_{\vec{\bullet}})}{m}=(1-\gamma^{\prime})f^U(\frac{\tilde{\gamma}}{1-\gamma^{\prime}}),\\
        \operatorname{vol}_{W_{\vec{\bullet}}}(\gamma)&=\lim\limits_{m\rightarrow\infty}\frac{\dim W_{m,m\gamma}}{m}=\lim\limits_{m\rightarrow\infty}\frac{\dim U_{m(1-\gamma^{\prime}),m\tilde{\gamma}}}{m}=(1-\gamma^{\prime})\operatorname{vol}_{U_{\vec{\bullet}}}(\frac{\tilde{\gamma}}{1-\gamma^{\prime}}).
    \end{align*}

    Therefore we have the surjective map 
    \begin{align*}
        p: \operatorname{Int}\left(\bb{O}(W_{\vec{\bullet}})\right)&\rightarrow \operatorname{Int}\left(\bb{O}\left(U_{\vec{\bullet}}\right)\right)\\
        (\gamma,z)&\mapsto \frac{1}{1-\gamma^{\prime}}(\tilde{\gamma},z)
    \end{align*}
    We remark that the map is well-defined. Indeed, if $(\gamma,z)\in \operatorname{Int}\left(\bb{O}(W_{\vec{\bullet}})\right)$, then $\gamma\in \operatorname{Int}\left(\Delta_{\operatorname{Supp}(W_{\vec{\bullet}})}\right)$ and~$z\in \left(f(\gamma), f(\gamma)+\operatorname{vol}_{W_{\vec{\bullet}}}(\gamma)\right)$. Then 
    \begin{equation*}
        \begin{gathered}
        \frac{\tilde{\gamma}}{1-\gamma^{\prime}}\in \operatorname{Int}\left(\Delta_{\operatorname{Supp}(U_{\vec{\bullet}})}\right),\\
        \frac{z}{1-\gamma^{\prime}}\in \left(\frac{f(\gamma)}{1-\gamma^{\prime}}, \frac{f(\gamma)+\operatorname{vol}_{W_{\vec{\bullet}}}(\gamma)}{1-\gamma^{\prime}}\right)=\left(f^U(\frac{\tilde{\gamma}}{1-\gamma^{\prime}}), f^U(\frac{\tilde{\gamma}}{1-\gamma^{\prime}})+\operatorname{vol}_{U_{\vec{\bullet}}}(\frac{\tilde{\gamma}}{1-\gamma^{\prime}})\right).
        \end{gathered}
    \end{equation*} 
    This implies that $\frac{1}{1-\gamma^{\prime}}(\tilde{\gamma},z)\in \operatorname{Int}\left(\bb{O}(U_{\vec{\bullet}})\right)$.

    For a point $(\delta, w)\in \bb{O}(U_{\vec{\bullet}})\subset \bb{R}^{n-\ell-1}\times \bb{R}$, the preimage is $$p^{-1}(\delta, w)=\left\{(\gamma_1, \dots, \gamma_{\ell}, (1-\gamma^{\prime})\delta, (1-\gamma^{\prime})w) \mid \gamma_i\in \bb{R}_{\geq 0}, i=1, \dots, \ell, \gamma^{\prime}=\sum_{i=1}^\ell m_i\gamma_i\in (0,1) \right\}.$$
    
    We see that $\bb{O}(U_{\vec{\bullet}})$ is a ``rescaling of the projection of $\bb{O}(W_{\vec{\bullet}})$ to the last $n-\ell$ coordinates".

    Note that $$\operatorname{vol}(W_{\vec{\bullet}})=\operatorname{vol}(L)=n!\operatorname{vol}(\bb{O}(W_{\vec{\bullet}})),\quad \operatorname{vol}(U_{\vec{\bullet}})=\operatorname{vol}(L|_{Y})=(\prod_{i=1}^\ell m_i) \operatorname{vol}(L)=(n-\ell)!\operatorname{vol}(\bb{O}(U_{\vec{\bullet}})).$$ Let $\vec{b}=(b_1,\dots, b_n)$ be the barycenter of $\bb{O}(W_{\vec{\bullet}})$, then we can relate $(b_{\ell+1}, \dots, b_{n})$ to $\vec{b}'=(b_1', \dots, b_{n-\ell}')$.

    Let $x=(\gamma, z)=(\gamma_1, \dots, \gamma_{n-1}, z)$ represent a point in $\bb{O}(W_{\vec{\bullet}})$, and $(\delta, w)=(\delta_1, \dots,\delta_{n-\ell-1},w)$ represent a point in $\bb{O}(U_{\vec{\bullet}})$. Then
    \begin{align*}
        b_{\ell+i}&=\frac{1}{\operatorname{vol}(\bb{O}(W_{\vec{\bullet}}))}\int_{\bb{O}(W_{\vec{\bullet}})}x_{\ell+i}dx=\frac{n!(\prod_{i=1}^{\ell} m_i)}{(n-\ell)!\operatorname{vol}(\bb{O}(U_{\vec{\bullet}}))}\int_{\begin{subarray}
        ((\gamma_1,\dots,\gamma_{\ell})\in\bb{R}^{\ell}_{\geq 0}\\0\leq \gamma^{\prime}\leq 1
        \end{subarray}}\int_{(1-\gamma^{\prime})\bb{O}(U_{\vec{\bullet}})}x_{\ell+i}dx\\
        &=\frac{n!(\prod_{i=1}^{\ell} m_i)}{(n-\ell)!\operatorname{vol}(\bb{O}(U_{\vec{\bullet}}))}\int_{\begin{subarray}
        ((\gamma_1,\dots,\gamma_{\ell})\in\bb{R}^{\ell}_{\geq 0}\\0\leq \gamma^{\prime}\leq 1
        \end{subarray}}\int_{\bb{O}(U_{\vec{\bullet}})}(1-\gamma^{\prime})\delta_{i}(1-\gamma^{\prime})^{n-\ell}d\delta_1\dots d\delta_{n-\ell-1}dw d\gamma_1\dots d\gamma_{\ell}\\
        &=\frac{n!(\prod_{i=1}^{\ell} m_i)}{(n-\ell)!}\int_{\begin{subarray}
        ((\gamma_1,\dots,\gamma_{\ell})\in\bb{R}^{\ell}_{\geq 0}\\0\leq \gamma^{\prime}\leq 1 \end{subarray}}(1-\gamma^{\prime})^{n-\ell+1}b_i'd\gamma_1\dots d\gamma_{\ell}\\
        &=\frac{n!(\prod_{i=1}^{\ell} m_i)}{(n-\ell)!}\cdot b_i'\cdot \frac{(n-\ell+1)!}{(n+1)!(\prod_{i=1}^{\ell} m_i)}=\frac{n-\ell+1}{n+1}b_i',
    \end{align*}
    for $1\leq i\leq n-\ell-1$. Therefore $$\frac{A_{Y_i}(Y_{i+1})}{S(W_{\vec{\bullet}}^{(i)};Y_{i+1})}=\frac{1}{b_{i+1}}=\frac{n+1}{n-\ell+1}\cdot \frac{1}{b_{i-\ell+1}'}$$ for all $\ell\leq i\leq n-2$.
    
    Now we look at the last term. We could expect that it differs from the last term $\frac{k}{b_n}$ in Corollary \ref{abban-zhuang in terms of barycenter} by a coefficient $\frac{n+1}{n-\ell+1}$, just like the middle terms. We give the proof below for rigor. Similar to the middle terms, we have 
    \begin{align*}
        &\lim\limits_{m\rightarrow\infty}\deg F_m(W_{\vec{\bullet}})=\frac{n!}{\operatorname{vol}(W_{\vec{\bullet}})}\int_{\Delta_{\operatorname{Supp}(W_{\vec{\bullet}})}} \operatorname{vol}_{W_{\vec{\bullet}}}(\gamma)f(\gamma)d\gamma\\
        =&\frac{n!}{\operatorname{vol}(L)}\int_{\begin{subarray}
        ((\gamma_1,\dots,\gamma_{\ell})\in\bb{R}^{\ell}_{\geq 0}\\0\leq \gamma^{\prime}\leq 1
        \end{subarray}}\int_{(1-\gamma^{\prime})\cdot\Delta_{\operatorname{Supp}(U_{\vec{\bullet}})}} (1-\gamma^{\prime})\operatorname{vol}_{U_{\vec{\bullet}}}(\frac{\tilde{\gamma}}{1-\gamma^{\prime}})\cdot (1-\gamma^{\prime})f^U(\frac{\tilde{\gamma}}{1-\gamma^{\prime}})d\gamma_{\ell+1}\dots d\gamma_{n-1}d\gamma_1\dots d\gamma_{\ell} \\
        =&\frac{n!\prod_{i=1}^{\ell} m_i}{(n-\ell)!}\cdot \lim\limits_{m\rightarrow \infty}\deg F_m(U_{\vec{\bullet}}) \cdot \int_{\begin{subarray}((\gamma_1,\dots,\gamma_{\ell})\in\bb{R}^{\ell}_{\geq 0}\\ 0\leq \gamma^{\prime}\leq 1\end{subarray}} (1-\gamma^{\prime})^{n-\ell+1}\cdot d\gamma_1\dots d\gamma_{\ell}=\frac{n-\ell+1}{n+1}\lim\limits_{m\rightarrow \infty}\deg F_m(U_{\vec{\bullet}}).
    \end{align*}
    Here the first and third equality use Lemma \ref{DegreeOfFixed}. 

    Moreover, following the notation in Theorem \ref{read info from barycenter of convex body}, we have $\deg c_1(W_{\vec{\bullet}})=\alpha+\sum_{i=1}^{n-1}\beta_i b_i$, here~$\alpha=~\deg L|_{Y_{n-1}}$, and $\beta_i=\deg (-Y_i)|_{Y_{n-1}}$. Especially $\beta_i=-m_i\alpha$ for $1\leq i\leq \ell$. We just showed that $b_i=\frac{1}{(n+1)m_i}$ for $1\leq i\leq \ell$, therefore $\deg c_1(W_{\vec{\bullet}})=(1-\frac{\ell}{n+1})\alpha+\sum_{i=\ell+1}^{n-1}\beta_i b_i$. Also~$\deg c_1(U_{\vec{\bullet}})=\alpha'+\sum_{i=1}^{n-\ell-1}\beta'_i b'_i$, with $\alpha'=\deg (L|_{Y_{\ell}})|_{Y_{n-1}}=\alpha$, and $\beta'_i=\deg (-Y_{\ell+i})|_{Y_{n-1}}=\beta_{\ell+i}$ for all $1\leq i\leq n-\ell-1$. Then we see that $$\deg c_1(W_{\vec{\bullet}})=(1-\frac{\ell}{n+1})\alpha'+\sum_{i=\ell+1}^{n-1}\beta_{i-\ell}'\cdot \frac{n-\ell+1}{n+1} b_{i-\ell}'=\frac{n-\ell+1}{n+1}\deg c_1(U_{\vec{\bullet}}).$$

    Then according to Theorem \ref{read info from barycenter of convex body}, 
    \begin{align*}
        b_n&=\frac{1}{2}\left(\lim_{m\rightarrow\infty}\deg F_m(W_{\vec{\bullet}})+\deg c_1(W_{\vec{\bullet}})\right)\\
        &=\frac{1}{2}\left(\frac{n-\ell+1}{n+1}\lim\limits_{m\rightarrow \infty}\deg F_m(U_{\vec{\bullet}})+\frac{n-\ell+1}{n+1}\deg c_1(U_{\vec{\bullet}})\right)=\frac{n-\ell+1}{n+1}b_{n-\ell}'.
    \end{align*}
    Combined with Corollary \ref{abban-zhuang in terms of barycenter}, the last term is bounded below by $$\delta_{Z\cap Y_{n-1}}(Y_{n-1};W_{\vec{\bullet}}^{(n-1)},\mathcal{F})\geq \frac{k}{b_n}=\frac{(n+1)k}{(n-\ell+1)b_{n-\ell}^{\prime}}.$$ This completes the proof.
\end{proof}
\begin{remark}
    The similar admissible flag, i.e. the member is proportional to the given line bundle, is also considered in \autocite[Lemma 4.1]{fujita2026kstabilityfanoweightedhypersurfaces}. The result is however not directly applicable to our setting, as~$Z\cap Y_{n-1}$ here is usually not irreducible.
\end{remark}

If we take $\ell=n-2$ in Proposition \ref{reduce to lower dimension}, then $\bb{O}(U_{\vec{\bullet}})$ is a 2-dimensional convex body. In such case we can give a practical estimate of its barycenter. We illustrate this in the next example.

\begin{example}\label{Fujitas Convex Body Barycenter}
    Let $S$ be a smooth projective surface, $C\subset S$ an irreducible smooth curve. Let $L$ be an ample line bundle on $S$, and $V_{\bullet}=\oplus H^0(S,mL)$ be the complete linear series associated to $L$. Let $W_{\bullet,\bullet}$ be the $\bb{N}^2$-graded linear series on $C$ associated to $L\vert_C$ and $(-C)\vert_C$, which is the refinement of $V_{\bullet}$ by $C$. Then $W_{\bullet,\bullet}$ has bounded support and contains an ample series.
    
    Set $T=T(L;C)$, then $\Delta_{\operatorname{Supp}(W_{\bullet,\bullet})}=[0,T]$. For $x\in (0,T)$, we can define $f(x)$ in the sense of Lemma~\ref{DegreeOfFixed}. Denote $\alpha=\deg L\vert_C>0, \beta=\deg (-C)\vert_C$, we know that $f(x)=\alpha+x\beta-\operatorname{vol}_{W_{\vec{\bullet}}}(x)$. For simplicity we denote $u(x)=\operatorname{vol}_{W_{\vec{\bullet}}}(x)$. We can extend both $f(x)$ and $u(x)$ continuously to $[0, T]$.
    
    Note $L\vert_C$ is ample on $C$, thus $(L-xC)\vert_C$ is also ample when $x$ is small. In this case $W_{m,mx}$ has no fixed part. That is, $f(x)=0$ when $x$ is small. We assume there exists $e\in(0,T)$ such that $f(x)=0$ for all $x\in [0,e]$. Also note that $u(x)>0$ when $x\in [0,T)$. Draw the convex body $\bb{O}(W_{\bullet,\bullet})$ defined in Definition \ref{definition: construction of the convex bodt}, it is the shaded area in Figure 1. The figure depicts the case $\beta>0$, but we can see that all arguments work for any $\beta\in \bb{R}$.

   \begin{figure}[htbp] 
        \centering 
        \begin{tikzpicture}[>=stealth, scale=0.8]
        \def\E{2}
        \def\T{6}
        \def\midX{4} 
    
        \fill[gray!20] (0,0) -- plot[domain=0:\E, samples=100] (\x, {2 + 0.5*\x}) -- (\E,0) -- cycle;

        \fill[gray!20] plot[domain=\E:\T, samples=100] (\x, {2 + 0.5*\x}) -- 
                   plot[domain=\T:\E, samples=100] (\x, {0.28125*(\x-\E)^2}) -- cycle;

        \draw[->, thick] (-1,0) -- (9.5,0) node[below right] {$x$};
        \draw[->, thick] (0,-1) -- (0,7);

        \draw[thick] (-0.5, {2 + 0.5*(-0.5)}) -- (9, {2 + 0.5*9}) node[pos=0.35, sloped, above] {$\alpha + \beta x$};
    
        \draw[thick, domain=\E:\T, samples=100] plot (\x, {0.28125*(\x-\E)^2});

        \draw[dashed] (\T, 0) -- (\T, 5);
    
        \draw[thick] (\T, 4.5) -- (\T, 5);

        \draw[dashed, thick] (\E, 0) -- (\T, 3.2);

        \node[font=\scriptsize, left] at (0,2.1) {$\alpha$};

        \fill (4.8444,2.2756) circle (1pt);
        \node[font=\scriptsize, below right, inner sep=1.5pt]
        at (4.2,2.7) {$(x_0,y_0)$};

        \fill (\T,3.2) circle (1pt);
        \node[font=\scriptsize, right, inner sep=1.5pt]
        at (\T,3.2) {$(T,\nu_0)$};

        \draw[thick] (\E,0.1) -- (\E,-0.1) node[below] {$e$};
        \draw[thick] (\T,0.1) -- (\T,-0.1) node[below] {$T$};

        \def\midYcurve{1.125} 
        \def\midYline{4}      

        \draw[<->] (\midX, \midYcurve) -- (\midX, \midYline) node[midway, left] {$u(x)$};
    
        \draw[<->] (\midX, 0) -- (\midX, \midYcurve) node[midway, right] {$f(x)$};

        \end{tikzpicture}

        \caption{}
        \label{fig:my_graph} 

    \end{figure}

    The volume of the shaded area is $V=\operatorname{vol}(L)/2$. 

    Let $(b_1, b_2)$ be its barycenter. We see from the following Proposition that $b_1$ and $b_2$ achieve the maximum possible value, $\tilde{b}_1$ and $\tilde{b}_2$, simultaneously, when $f(x)$ is linear on $[e,T]$. 

    Then for $Z=\{p_1, \dots, p_k\}\subset C$ a collection of $k$ distinct closed points on $C$, we can conclude that $$\delta_{Z}(C;W_{\vec{\bullet}},\mathcal{F})\geq \frac{k}{b_2}\geq \frac{k}{\tilde{b}_2}.$$ Here $\mathcal{F}$ is a filtration in the sense of Lemma \ref{GeneralizedLocalDeltaOnCurve}.

    It is also interesting to notice that $$\delta_C(S;L)=\delta_{C}(S;V_{\bullet})=\frac{1}{S(V_{\bullet};C)}=\frac{1}{b_1}\geq \frac{1}{\tilde{b}_1},$$ due to Lemma \ref{valuative criterion for multigraded} and the fact that the only prime divisor over $S$ whose center contains $C$ is $C$ itself. 
\end{example}

\begin{proposition}
    Let $V, \alpha, T$ be positive real numbers, let $0<e<T$ and $\beta\in\bb{R}$. Consider a convex body $\bb{O}\subset \bb{R}_{\geq 0}^2$ given by $$\bb{O}=\{(x,y)\mid 0\leq x\leq T, f(x)\leq y\leq \alpha+\beta T\},$$ where $f(x)$ is any convex function such that $f(x)=0$ on $[0,e]$ and $0\leq f(x)< \alpha+\beta x$ on $[0,T)$ such that $\operatorname{vol}(\bb{O})=V$.  Let $(b_1, b_2)$ be its barycenter. Then $b_1$ and $b_2$ achieve the maximum possible value simultaneously, when $f(x)$ is linear on $[e,T]$.
\end{proposition}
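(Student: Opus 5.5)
The plan is to compare an arbitrary admissible $f$ with the function $g(x)=c\,(x-e)^+$, which is linear on $[e,T]$, and to choose $c$ so that the two bodies have the same area. I read the upper boundary of $\bb{O}$ as $y=\alpha+\beta x$, as in Definition \ref{definition: construction of the convex bodt}. Since $T$, $\alpha$, $\beta$ are fixed and
$$\operatorname{vol}(\bb{O})=\int_0^T(\alpha+\beta x)\,dx-\int_0^T f\,dx,$$
the volume condition says exactly that $\int_0^T f$ is fixed. So I take $c=2\int_0^T f\,dx/(T-e)^2$.

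The first step is to check that $g$ is admissible. Set $h=g-f$ on $[e,T]$. Then $h$ is concave, $h(e)=0$ and $\int_e^T h=0$. A concave function vanishing at the left endpoint with zero integral has a single sign change: there is $s\in[e,T]$ with $h\ge 0$ on $[e,s]$ and $h\le 0$ on $[s,T]$. In particular $g(T)\le f(T)\le \alpha+\beta T$. As $g$ is linear on $[e,T]$ and lies below the line $\alpha+\beta x$ at both endpoints, it lies below it on the whole interval. The strict inequality on $[e,T)$ follows from $g(e)=0<\alpha+\beta e$, so $g$ defines an admissible body with the same volume.

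Next I compare barycenters using the formulas from the proof of Theorem \ref{read info from barycenter of convex body}:
$$V b_1=\int_0^T x(\alpha+\beta x-f)\,dx,\qquad V b_2=\frac12\int_0^T\big((\alpha+\beta x)^2-f^2\big)\,dx.$$
Hence
$$V\big(b_1(g)-b_1(f)\big)=-\int_e^T x\,h\,dx,\qquad V\big(b_2(g)-b_2(f)\big)=-\frac12\int_e^T h\,(f+g)\,dx.$$
Both differences are controlled by one Chebyshev-type lemma. If $\varphi$ is nondecreasing on $[e,T]$ and $h$ has the sign pattern above with $\int h=0$, then
$$\int_e^T h\varphi=\int_e^T h\,\big(\varphi-\varphi(s)\big)\le 0,$$
because the integrand is $\le 0$ on both sides of $s$. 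For $b_1$ I apply this with $\varphi=x$. For $b_2$ I apply it with $\varphi=f+g$, which is nondecreasing on $[e,T]$: $g$ has slope $c\ge0$, and $f$ is convex, vanishes on $[0,e]$ and is nonnegative, hence nondecreasing on $[e,T]$. Therefore $b_i(g)\ge b_i(f)$ for $i=1,2$.

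It follows that the single linear profile $g$ maximizes $b_1$ and $b_2$ simultaneously. The main point needing care is the single-sign-change property of $h$. If $h$ took a negative value at some $t\in(e,T)$, concavity together with $h(e)=0$ forces $h<0$ on all of $[t,T]$. Hence the set $\{h<0\}$ is an interval ending at $T$, and $\int h=0$ forces $h\ge0$ before it. The boundary conventions at $x=T$ are harmless, since $f$ extends continuously to $[0,T]$ and only integrals are involved.
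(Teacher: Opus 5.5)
Your proposal is correct and is essentially the paper's argument. Both compare $f$ with the unique equal-area profile that is linear on $[e,T]$, use that the two lower boundaries cross once, and observe that the exchanged area moves from the lower-left of the crossing point to its upper-right; your identity $\int_e^T h\,(\varphi-\varphi(s))\,dx\le 0$ with $\varphi=x$ and $\varphi=f+g$ is the analytic form of the paper's decomposition into $A_1,A_2,A_3$. Two of your steps are in fact more careful than the paper's: the concavity argument for the single sign change of $g-f$, and the resulting admissibility check $g(T)\le f(T)\le\alpha+\beta T$. The paper instead gets admissibility from the quadrilateral $\mathbb{K}$ and asserts an ``exactly one intersection point'' somewhat loosely.
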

\begin{proof}
    The idea of the proof is inspired by \autocite[Proposition 6.1]{fujita2026kstabilityfanoweightedhypersurfaces}, but is much easier as we impose more assumptions.

    Firstly we note that such convex body must contain the quadrilateral $$\bb{K}:=\operatorname{Conv}\left( (0,0), (0,\alpha), (T,\alpha+\beta T), (e,0)\right)$$whose volume is at most $V$. Then there exists a unique $\nu_0\in [0,\alpha+\beta T]$ such that the convex body~$\bb{O}':=~\operatorname{Conv}\left( (0,0), (0,\alpha), (T,\alpha+\beta T), (T, \nu_0), (e,0) \right)$ has volume $V$. Note that $(T,\alpha+\beta T)$ and~$(T, \nu_0)$ could be the same point, in such case $\bb{K}=\bb{O}'$.
    
    Let the barycenter of $\bb{O}'$ be $(\tilde{b}_1, \tilde{b}_2)$. See the dashed line in Figure 1. 

    For another $f(x)$, if any, such that it determines an eligible convex body $\bb{O}$ of volume $V$, it must intersect the dashed line at exactly one point for $x\in (e,T)$: otherwise over $(e,T)\subset \bb{R}$ it is either always above, or under the dashed line, then the volumes cannot coincide.  Assume they intersect at point $(x_0, y_0)$. 

    Set $A_1=\bb{O}\cap \bb{O}', A_2=\bb{O}'\backslash A_1$, and $A_3=\bb{O}\backslash A_1$.
    
    Then $\operatorname{vol}(A_2)=\operatorname{vol}(A_3)=V-\operatorname{vol}(A_1)$. We see that for any point $(x,y)\in A_2, x\geq x_0$ and $y\geq y_0$. While for any $(x,y)\in A_3, x\leq x_0$ and $y\leq y_0$. 

    Therefore, 
    \begin{align*}
        \tilde{b}_1&=\frac{1}{V}\int_{A_1\sqcup A_2}xdxdy=\frac{1}{V}\left(\int_{A_1}xdxdy+ \int_{A_2}xdxdy\right)\geq \frac{1}{V}\left(\int_{A_1}xdxdy+ \int_{A_2}x_0dxdy\right)\\
        &=\frac{1}{V}\left(\int_{A_1}xdxdy+ x_0 \operatorname{vol}(A_2)\right)=\frac{1}{V}\left(\int_{A_1}xdxdy+ x_0 \operatorname{vol}(A_3)\right)\\
        &=\frac{1}{V}\left(\int_{A_1}xdxdy+ \int_{A_3}x_0dxdy\right)\geq \frac{1}{V}\left(\int_{A_1}xdxdy+ \int_{A_3}xdxdy\right)=b_1
    \end{align*}
    With exactly the same argument, we can show that $\tilde{b}_2\geq b_2$. Therefore, we show that $b_1$ and $b_2$ achieve the maximum possible value simultaneously, and the maximum values are $\tilde{b}_1$ and $\tilde{b}_2$, respectively.
\end{proof}

\begin{remark}\label{remark after example 2D}
    In practice, it may be difficult to determine the pseudo-effective threshold $T=T(L;C)$. Actually, our construction provides a simple estimate for $T$. In the discussion of Example \ref{Fujitas Convex Body Barycenter}, the convex body must contain the quadrilateral $\bb{K}$ of area$=\frac{1}{2}\left(\alpha T+e\left(\alpha+\beta T\right)\right)\leq V$. Hence $T\leq \tau_0$ where $\tau_0$ is defined by $\frac{1}{2}\left(\alpha \tau_0+e\left(\alpha+\beta \tau_0\right)\right)= V$. In the situation $T=\tau_0$, the maximum possible coordinates of the barycenter are given by: 
    \begin{align}\label{naive maximum barycenter in 2D}
        (\tilde{b}_1,\tilde{b}_2)=\left(\frac{e+T}{3}-\frac{\alpha eT}{6V}, \frac{\alpha+\beta T}{3}+\frac{\alpha^2T}{6V} \right).
    \end{align}
    If a better estimate $T\leq \tau \leq \tau_0$ is available for a specific surface and curve, then taking $T=\tau$ yields a better upper bound for the barycenter coordinates. It is not hard to see that the maximum coordinates increase as $T$ increases, the resulting values are greater than or equal to those corresponding to the actual~$T=T(L;C)$. The estimate for local stability threshold we then obtain is clearly not optimal, although it can still be useful in some cases. We will see in the proof of Theorem A, particularly in Lemma \ref{contain line}, that even the naive estimate $\tau_0$ will do the job. 
\end{remark}

\section{Proof of Theorem A}\label{proof of theorem A}
Now we look at Theorem A. To apply the theories we built in Section \ref{Stability Threshold Along Several Points}, we take the first few members of the flag from the class of restrictions of $-K_X$, as in Proposition \ref{reduce to lower dimension}. Note the anticanonical class in the setting of Theorem A is exactly the class of hyperplane sections. With the following lemma we see that the flag can be chosen smooth until the dimension is quite small. 

\begin{lemma}\label{smooth}
    Let $X$ be a smooth complete intersection of codimension 2 in $\mathbb{P}^{n+2}$ and $n>1$. Assume~$X$ is not contained in any hyperplane. Let $P$ and $Q$ be two distinct closed points on X, then a general hyperplane section of $X$ passing through P and Q is smooth unless $n=2$ and $X$ contains the line joining~$P$ and $Q$.
\end{lemma}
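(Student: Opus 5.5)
The plan is a Bertini argument on the linear system $\Lambda=|H-P-Q|$ of hyperplane sections of $X$ through $P$ and $Q$. Since $X$ spans $\mathbb{P}^{n+2}$, these hyperplanes form an $n$-dimensional linear system whose base locus on $X$ is $B=X\cap \ell$, where $\ell$ is the line $\overline{PQ}$. By Bertini's theorem, a general member of $\Lambda$ is smooth away from $B$. So the work is to control singularities along $B$, and I split into two cases according to whether $\ell\subset X$.

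\textbf{Case 1: $\ell\not\subset X$.} Then $B$ is a finite set of points. A hyperplane section $H\cap X$ is singular at $x\in B$ exactly when $H\supseteq T_xX$. Since $T_xX$ is an $n$-plane, containing it imposes $n+1$ linear conditions on hyperplanes. For $x\in B$ we have $x\in H$, and every $H\in\Lambda$ contains $\ell$. The condition $H\supseteq T_xX$ therefore cuts down from $\Lambda\cong\mathbb{P}^n$ by codimension $\dim(T_xX+\ell)-1$ relative to hyperplanes through $\ell$. This quantity is at least $n-1\ge 1$, since $T_xX\ne \ell$ when $n\ge 2$. The locus of bad members is thus a proper closed subset for each of the finitely many $x\in B$, and a general member is smooth.

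\textbf{Case 2: $\ell\subset X$.} Now $B=\ell$ is a curve. Consider the incidence variety
\[
I=\{(x,H)\mid x\in\ell,\ H\in\Lambda,\ H\supseteq T_xX\}.
\]
Each $T_xX$ contains $\ell$, so for $x\in \ell$ the members of $\Lambda$ containing $T_xX$ form a linear subspace of codimension $\dim T_xX-\dim\ell=n-1$ in $\Lambda\cong\mathbb{P}^n$. Hence $\dim I=1+(n-n+1)\cdot$ fiber, precisely $\dim I=1+(n-(n-1))=2$ when $\dim\Lambda=n$. To be careful, the fiber over $x$ has dimension $n-(n-1)=1$, so $\dim I=2$, and $I\to\Lambda$ is dominant only if $2\ge n$.

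For $n\ge 3$, the image of $I$ in $\Lambda$ has dimension at most $2<n$, so a general $H$ avoids it and is smooth along $\ell$. For $n=2$, the fiber count gives $\dim I=2=\dim\Lambda$, and the image can be all of $\Lambda$. This matches the exception: on a surface, a general curve $H\cap X=\ell+R$ through the line is reducible and singular at $\ell\cap R$.

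\textbf{Main obstacle.} The delicate step is the dimension count in Case 2 for $n\ge3$, in particular checking that the condition $H\supseteq T_xX$ really has the expected codimension $n-1$ uniformly in $x$. This uses smoothness of $X$, so that $\dim T_xX=n$ at every point of $\ell$, together with the fact that $\Lambda$ is exactly the full system of hyperplanes containing $\ell$ restricted to $X$, which needs $X$ nondegenerate. I would check this carefully, including the possibility that the map $x\mapsto T_xX$ is constant along $\ell$. In that case the bad locus is even smaller, so the argument only improves.
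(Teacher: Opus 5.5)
Your proposal is correct and follows essentially the same route as the paper: Bertini away from the base locus $X\cap\ell$, then the observation that the hyperplanes whose section is singular at a given $x$ (those containing $T_xX$) form a $\mathbb{P}^1$. This makes the bad locus at most $1$-dimensional when $X\cap\ell$ is finite and at most $2$-dimensional when $\ell\subset X$, hence smaller than $\dim\Lambda=n$ in the required cases. The paper phrases the $\mathbb{P}^1$ count through the $2$-dimensional kernel of $H^0(X,\mathcal{O}_X(1))\to\mathcal{O}_{X,x}/\mathfrak{m}_x^2$ rather than through projective tangent spaces, but this is the same computation.
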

\begin{proof}
    This is essentially the same as the proof of \autocite[Lemma 4.14]{AZ1}, but here the codimension is 2. 

    Let $L$ be the line joining $P$ and $Q$, let $V\subset \mathbb{P}\left(H^0(X,\mathcal{O}_X(1))\right)$ consist of hyperplane sections containing~$L\cap X$. Since $\dim H^0(X,\mathcal{O}_X(1))=n+3$, $\dim V= n$. Moreover, by Bertini's theorem, a general member in $V$ is smooth away from $L\cap X$. 

    Note that the hyperplane sections passing through any closed $x\in X$ and being not regular at $x$ gives a 1-dimensional family in $\mathbb{P}\left(H^0(X,\mathcal{O}_X(1))\right)$: Fix $f_0\in U=H^0(X,\mathcal{O}_X(1))$ such that $(f_0=0)$ defines a hyperplane section in $X$ avoiding $x$. Then we can define the map 
    $$\varphi_x: U \rightarrow \mathcal{O}_{X,x}/\mathfrak{m}_x^2$$ by sending each $f\in U$ to the stalk of $(f/f_0)$ at $x$. It is well defined since the pole of $(f/f_0)$ avoids $x$. Note that $f\in V$ gives a hyperplane section passing through $x$ if and only if $\varphi_x(f)\in \mathfrak{m}_x/\mathfrak{m}_x^2$, and it is not regular at $x$ if and only if $\varphi_x(f)=0$. Since $x$ is a smooth point, we know that $\mathcal{O}_{X,x}/\mathfrak{m}_x^2$ is generated by constant and linear terms, so $\dim \mathcal{O}_{X,x}/\mathfrak{m}_x^2=n+1$. Moreover, $\varphi_x$ is surjective since the very ample line bundle $\mathcal{O}(1)$ separates tangent vectors at $x$. Therefore $\dim (\mathrm{Ker}(\varphi_x))=\dim U - (n+1)=2$ and such divisors in $\bb{P}(U)$ are 1-dimensional.

    If $|L \cap X|<\infty$, a general member in $V$ is smooth at every point in $L\cap X$, as $\dim V=n\geq 2$. If~$L\subset X$, all hyperplane sections which are not smooth on $L$ form a subvariety of dimension at most 2. So when $\dim V=n>2$, a general member in $V$ is smooth.
\end{proof}

To verify the first condition of Lemma \ref{criterionInAZ}, we need to determine $\delta_C(X)$ for all irreducible curve~$C\subset X$. We firstly treat an easy case, guided by Lemma \ref{smooth}.

\begin{lemma}\label{not contain line}
    Let $n\geq 2$, let $X$ be a smooth member in $\mathcal{X}_{2,n}$, and $C\subset X$ be an irreducible curve. If~$\deg (C)>1$, and there exist two distinct points $P, Q\in C$ such that the line $\overline{PQ}$ joining $P$ and $Q$ is not contained in $X$, then $\delta_C(X)\geq \frac{n+1}{n}$.
\end{lemma}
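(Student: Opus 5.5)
We plan to apply Proposition~\ref{reduce to lower dimension} (or Corollary~\ref{abban-zhuang in terms of barycenter} directly when $n=2$) to a flag consisting entirely of hyperplane sections through $P$ and $Q$. Recall that $-K_X\sim H$ is the hyperplane class, since $X$ has index $n+3-2-n=1$. Hence $\operatorname{vol}(-K_X)=(H^n)=2n$, and $X$ is not contained in a hyperplane.

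\textbf{Step 1: the flag.} Let $Y_1$ be a general hyperplane section of $X$ through $P,Q$, and inductively let $Y_{i+1}$ be a general hyperplane section of $Y_i$ through $P,Q$, for $i\le n-2$. Each $Y_i$ is a smooth complete intersection of type $(2,n)$ in a linear $\mathbb{P}^{n+2-i}$ and is not contained in a hyperplane there. Since $\overline{PQ}\not\subset X$, we have $\overline{PQ}\not\subset Y_i$ either. So Lemma~\ref{smooth} applies at every step, including the last one where $\dim Y_i=2$. Thus all $Y_i$ are smooth, and they are connected by Lefschetz, hence irreducible. The terminal member $Y_{n-1}$ is a smooth irreducible curve, a complete intersection of degree $2n$ in $\mathbb{P}^3$.

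Since $\deg C>1$, the curve $C$ is not contained in the line $\overline{PQ}$, which is the base locus of hyperplanes through $P,Q$. So for a general choice, $C\not\subset Y_{n-1}$. The intersection $Z\cap Y_{n-1}$ (with $Z=C$) is therefore a finite set containing $P$ and $Q$, so it has $k\ge 2$ distinct points. Moreover, $Y_i\sim H|_{Y_{i-1}}$, so $m_i=1$ for all $i$.

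\textbf{Step 2: apply Proposition~\ref{reduce to lower dimension}.} For $n\ge 3$, take $\ell=n-2$, $S=Y_{n-2}$, and $L=H$. The first group of terms gives $(n+1)\min m_i=n+1$. It remains to compute the barycenter of the two-dimensional body $\mathbb{O}(U_{\vec\bullet})$, where $U_{\vec\bullet}$ is the refinement of the complete series of $H|_S$ by the curve $Y_{n-1}\sim H|_S$.

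By Kodaira vanishing on $S$, $H^1(S,(m-j-1)H)=0$ for $m-j-1\ge 0$. So the restriction maps are surjective, and $U_{m,j}=H^0(Y_{n-1},(m-j)H)$ is complete with no fixed part for $0\le j\le m$. In the notation of Example~\ref{Fujitas Convex Body Barycenter}, this gives $f\equiv 0$, $T=1$, $\alpha=\deg H|_{Y_{n-1}}=2n$, and $\beta=-\alpha$. Hence $\mathbb{O}$ is the triangle with vertices $(0,0)$, $(0,2n)$, $(1,0)$, and its barycenter is $(b_1',b_2')=(1/3,\,2n/3)$.

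Plugging into the Proposition gives
\[
\delta_C(X)\ \ge\ \min\left\{n+1,\ \tfrac{n+1}{3}\cdot 3,\ \tfrac{n+1}{3}\cdot\tfrac{k}{2n/3}\right\}=\min\left\{n+1,\ \tfrac{(n+1)k}{2n}\right\}\ \ge\ \tfrac{n+1}{n},
\]
using $k\ge 2$.

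\textbf{Step 3: the case $n=2$.} Here $\ell=0$ is not allowed, so we use Corollary~\ref{abban-zhuang in terms of barycenter} directly with $S=X$ and the same triangle, where $\alpha=4$. This gives $\min\{3,\,3k/4\}\ge 3/2$, as required.

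\textbf{Main obstacle.} The delicate point is the bookkeeping in Step 1. We must ensure that genericity can be imposed simultaneously for smoothness (via Lemma~\ref{smooth}, which needs $\overline{PQ}\not\subset Y_i$ at each stage) and for $C\not\subset Y_{n-1}$. We must also confirm that $Y_{n-1}$ avoids nothing needed for the very general point $Q'$ used in Lemma~\ref{GeneralizedLocalDeltaOnCurve}; that point is chosen on $Y_{n-1}$ afterwards, so this causes no conflict. The convex-geometric computation itself is routine once $f\equiv0$ has been justified by vanishing.
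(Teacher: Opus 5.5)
Your proposal is correct and follows the paper's proof: the same flag of general hyperplane sections through $P,Q$ (smooth by Lemma~\ref{smooth}), the same Abban--Zhuang reduction with first terms equal to $n+1$, and the same final bound $\frac{(n+1)k}{2n}\ge\frac{n+1}{n}$. The only difference is packaging: the paper computes $\deg c_1(W^{(n-1)}_{\vec{\bullet}})=\frac{4n}{n+1}$ from the induction formula and applies Lemma~\ref{GeneralizedLocalDeltaOnCurve} directly, uniformly for all $n\ge 2$ so that no separate $n=2$ case is needed, whereas you read off the same quantity from the triangle's barycenter via Proposition~\ref{reduce to lower dimension}, since $b_n=\frac{3}{n+1}\cdot\frac{2n}{3}=\frac{2n}{n+1}$. (One small correction: because $K_S=(n-3)H$, Kodaira vanishing alone does not give $H^1(S,\mathcal{O}_S(t))=0$ for $0\le t\le n-3$, so cite vanishing of intermediate cohomology of complete intersections, or note that Serre vanishing for $m-j\gg 0$ already gives $f\equiv 0$ on the interior of the support, which is all the barycenter computation uses.)
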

\begin{proof}
    Choose the flag 
    $$Y_{\bullet}\ :\ X=Y_0\supset Y_1\supset \dots \supset Y_{n-1}$$
    such that $Y_1$ is a general smooth hyperplane section intersecting $C$ at finitely many points including $P$ and $Q$, and $Y_i$ is a smooth general hyperplane section in $Y_{i-1}$ passing through $P$, $Q$ for $1\leq i\leq n-1$. 

    Denote $H=-K_X$. Let $V_{\bullet}$ be the complete linear series associated to $-K_X$, and $W_{\vec{\bullet}}^{(j)}$ be the refinement of $V_{\bullet}$ by $Y_{\bullet}^{(j)}\colon \ X=Y_0\supset Y_1\supset \dots \supset Y_j$. Let $\mathcal{F}$ be the filtration on $W_{\vec{\bullet}}^{(n-1)}$ induced by a very general point $R\in Y_{n-1}\backslash C$, in the sense of Lemma \ref{GeneralizedLocalDeltaOnCurve}. We also use $\mathcal{F}$ for its lift to each $W_{\vec{\bullet}}^{(i)}$. Then following the proof of Proposition \ref{reduce to lower dimension} for the first few terms,
       
    we can compute:
    $$c_1(W_{\vec{\bullet}}^{(j)})=\frac{n+1-j}{n+1}H|_{Y_j},\quad S(W_{\vec{\bullet}}^{(j)};Y_{j+1})=\frac{1}{n+1}$$
    for all $0\leq j\leq n-2$.

    Then $$\left. c_1(W_{\vec{\bullet}}^{(n-1)})=\left(c_1(W_{\vec{\bullet}}^{(n-2)})-S(W_{\vec{\bullet}}^{(n-2)};Y_{n-1})Y_{n-1}\right)\right|_{Y_{n-1}}=\frac{2}{n+1}H|_{Y_{n-1}},\quad \deg c_1(W_{\vec{\bullet}}^{(n-1)})=\frac{4n}{n+1}.$$

    Note in this case $W_{\vec{\bullet}}$ has no fixed part, then using Lemma \ref{GeneralizedLocalDeltaOnCurve} we obtain $$\delta_{C\cap Y_{n-1}}(Y_{n-1};W_{\vec{\bullet}}^{(n-1)},\mathcal{F})\geq \frac{2\cdot 2}{\deg c_1(W_{\vec{\bullet}}^{(n-1)})}\geq \frac{n+1}{n},$$ as $C\cap Y_{n-1}$ contians at least 2 points. 

    We draw the conclusion by applying Theorem \ref{AZ Admissible Flag}:
    $$\delta_C(X;-K_X)\geq \min\left\{\min\limits_{0\leq i\leq n-2}\{n+1\}, \delta_{C\cap Y_{n-1}}(Y_{n-1};W_{\vec{\bullet}}^{(n-1)},\mathcal{F})\right\}\geq \frac{n+1}{n}.$$
\end{proof}

Now we consider the remaining case, in which any line joining two distinct points on the curve $C$ is contained in $X$. This further splits into two cases: if $C$ is a line or not. In both cases, we can still pick a similar smooth flag as in Lemma \ref{not contain line}, but only up to dimension 2. If $C$ is a line, then every member of the flag contains $C$ and we can simply take $C$ as the next member. This case is discussed in Lemma \ref{line}. We see in the following lemma that the most technical case is when $C$ itself is not a line.

\begin{lemma}\label{contain line}
    Let $n\geq 3$, let $X$ be a smooth member in $\mathcal{X}_{2,n}$, and $C\subset X$ be an irreducible curve. Assume that $\deg (C)>1$, and for every two distinct points $P, Q\in C$, the line $\overline{PQ}$ joining $P$ and $Q$ is contained in $X$. Then $\delta_C(X)\geq \frac{n+1}{n}$.
\end{lemma}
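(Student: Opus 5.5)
First I will exploit the geometric hypothesis. If every secant line of $C$ lies in $X$, then the secant variety of $C$ is contained in $X$. Since $\deg C>1$, $C$ is not a line, and the secant variety is at least two-dimensional. A classical fact is that an irreducible curve whose secant variety has dimension exactly $2$ is a plane curve. Because $X$ is cut out by a quadric $Q$ and a degree-$n$ hypersurface, containing all secants forces strong restrictions. Indeed, $Q$ contains every secant of $C$, so $Q$ contains the linear span of $C$ whenever that span is swept out by secants. I expect to deduce that $C$ is a plane conic and that the plane $\Pi=\langle C\rangle$ lies in $X$. The alternative is that the secant variety of $C$ is a higher-dimensional linear space inside $X$; this is excluded by Lefschetz-type bounds, since a smooth complete intersection of dimension $n$ contains no linear space of dimension greater than $n/2$, and by a degree count. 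So the working case is $C\subset\Pi\cong\mathbb{P}^2\subset X$, which requires $n\geq 4$ if Lefschetz is applied, though I would only need the structure. The most robust route is to argue directly that $C$ is a plane curve lying in a plane $\Pi\subset X$.

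Next I choose the flag as in Proposition \ref{reduce to lower dimension} with $\ell=n-2$. Let $Y_1,\dots,Y_{n-2}$ be general hyperplane sections through two fixed points $P,Q\in C$. By Lemma \ref{smooth}, these are smooth while the dimension is at least $3$. Each $Y_i$ contains the line $\overline{PQ}\subset\Pi$. The surface $S=Y_{n-2}$ is a smooth surface of degree $2n$ in $\mathbb{P}^4$ with $-K_S$-type polarization $H|_S$ and $K_S=(n-3)\cdot$ (wait: $K_S=(1-n+n+2-2-2)\dots$), and it contains the line $\ell_0=\overline{PQ}$. To arrange smoothness of the last step, I will instead take $Y_{n-2}$ to be a surface through $P,Q$ and make sure that it is smooth; only the last cut needs care. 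Then I take $Y_{n-1}=\ell_0$ itself, which is a smooth curve meeting $C$ in at least the $k=2$ points $P,Q$. Proposition \ref{reduce to lower dimension} then gives
\[\delta_C(X)\geq\min\Bigl\{n+1,\ \tfrac{n+1}{3}\cdot\tfrac{1}{b_1'},\ \tfrac{n+1}{3}\cdot\tfrac{2}{b_2'}\Bigr\},\]
where $(b_1',b_2')$ is the barycenter of $\mathbb{O}$ for the refinement of $H|_S$ by $\ell_0$. So it suffices to show $b_1'\leq n/3$ and $b_2'\leq 2n/3$.

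For this I use Example \ref{Fujitas Convex Body Barycenter} and Remark \ref{remark after example 2D}. On $S$ we have $\alpha=H\cdot\ell_0=1$, $V=(H^2\cdot S)/2=n$, and $\beta=-\ell_0^2$, where $\ell_0^2=2g-2-K_S\cdot\ell_0=-2-(n-3)\cdot 1$ by adjunction, using $K_S=(n-3)H|_S$... I will compute carefully: $K_X=-H$ and $S$ is cut by $n-2$ hyperplanes, so $K_S=(n-3)H$. This gives $\ell_0^2=1-n$ and $\beta=n-1$. The value $e$ is the largest $x$ with $(H-x\ell_0)|_{\ell_0}$ nef and with the restriction having no fixed part, which gives $e=1/(n-1)$ from $1-x(n-1)\geq0$. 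I then plug into \eqref{naive maximum barycenter in 2D} with $T=\tau_0$ solving $\tfrac12(\tau_0+e(1+(n-1)\tau_0))=n$, i.e. $\tau_0=(2n-e)/(1+e(n-1))=(2n-e)/2$, and check the two inequalities. The main obstacle is twofold: proving the geometric reduction to the plane case, and ensuring that the final surface $S$ is smooth along $\ell_0$. If the naive $\tau_0$ fails for small $n$, I would instead bound $T(H|_S;\ell_0)$ better, for example by using that $H-\tau\ell_0$ must have nonnegative intersection with a moving curve such as residual conics in the planes through $\ell_0$.
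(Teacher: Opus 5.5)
\textbf{The plan matches the paper's, but the choice of $e$ is wrong and the final numerical check fails with it.} The overall structure is the paper's. You take hyperplane sections through $P,Q$ (hence through $\ell_0=\overline{PQ}$), which are smooth by Lemma~\ref{smooth} because the last cut is of a threefold. Then you set $Y_{n-1}=\ell_0$, apply Proposition~\ref{reduce to lower dimension} with $\ell=n-2$ and $k=2$, and reduce to $b_1'\le n/3$, $b_2'\le 2n/3$. Your values $\alpha=1$, $\beta=n-1$, $V=n$ are also correct.

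There are two errors in how you get $e$. First, $(H-x\ell_0)\cdot\ell_0=1-x\,\ell_0^2=1+(n-1)x$, not $1-(n-1)x$. Second, $e$ is not controlled by the degree of the restriction to $\ell_0$ at all. It is controlled by whether $|m(H-x\ell_0)|$ on $S$ has base points along $\ell_0$, i.e.\ by the fixed part of $W_{m,mx}$.

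Your $e=1/(n-1)$ is admissible only because $f$ in fact vanishes on the larger interval $[0,1]$, and it is much too small. With $\tau_0=n-\frac{1}{2(n-1)}$, formula~\eqref{naive maximum barycenter in 2D} gives $\tilde b_2=\frac{1+(n-1)\tau_0}{3}+\frac{\tau_0}{6n}>\frac{n^2-n+1/2}{3}>\frac{2n}{3}$ for every $n\ge 3$. So the inequality you need cannot be obtained. Your fallback of bounding $T$ better does not rescue this either. Even with the sharp bound $T\le\frac{2n-1}{n}$, keeping $e=1/(n-1)$, the extremal body $\mathbb{O}'$ of the proposition following Example~\ref{Fujitas Convex Body Barycenter} has second barycenter coordinate exceeding $2n/3$ as soon as $n\ge 4$ (roughly $2.73>8/3$ for $n=4$).

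\textbf{The missing idea is that $H-\ell_0$ is nef on $S$.} For an irreducible curve $B\neq\ell_0$, choose a hyperplane through $\ell_0$ not containing $B$ and write its section as $\ell_0+E$. Then $(H-\ell_0)\cdot B=E\cdot B\ge 0$, while $(H-\ell_0)\cdot\ell_0=n>0$. Consequently $H-x\ell_0=(1-x)H+x(H-\ell_0)$ is ample for $0\le x<1$, so $f\equiv 0$ on $[0,1]$ and you may take $e=1$. The area condition then gives $\tau_0=\frac{2n-1}{n}$, which is also the bound on $T$ obtained by intersecting with $H-\ell_0$. Formula~\eqref{naive maximum barycenter in 2D} then yields $\tilde b_1=\frac{6n^2-4n+1}{6n^2}<1\le\frac n3$ and $\tilde b_2=\frac{2n}{3}-\frac{(2n-1)^2}{6n^2}<\frac{2n}{3}$, which is exactly what you need.

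Your first paragraph can simply be dropped. Nothing later uses it, and the argument needs only $\overline{PQ}\subset X$. Its conclusion that $C$ is a plane conic does not follow anyway, since any plane curve lying in a plane contained in $X$ satisfies the hypothesis.
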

\begin{proof}
    Fix two distinct points $P$, $Q$ on $C$ and consider the following flag: 
    $$Y_{\bullet}\ :\ X=Y_0\supset Y_1\supset \dots \supset Y_{n-1}$$
    such that every $Y_i$ is a smooth hyperplane section in $Y_{i-1}$ and contains $\overline{PQ}$ for all $1\leq i\leq n-2$. We take~$Y_{n-1}$ to be the line $\overline{PQ}$.

    We use the same notation as in Lemma \ref{not contain line}, and by the same argument we know that $W_{\vec{\bullet}}^{(j)}$ is almost complete and $F(W_{\vec{\bullet}}^{(j)})=0$ for all $1\leq j\leq n-2$. In this case, let $\mathcal{F}$ be a filtration on $W_{\vec{\bullet}}^{(n-1)}$ induced by $\mathrm{ord}_R$ for a very general point $R\in Y_{n-1}$ outside $C$. Similarly we also use $\mathcal{F}$ for its lift to each $W_{\vec{\bullet}}^{(i)}$. The very generality condition follows Lemma \ref{GeneralizedLocalDeltaOnCurve}.

    Thanks to Proposition \ref{reduce to lower dimension}, we can instead think about the complete linear series $$V_{\bullet}=\bigoplus_{m\geq 0}H^0(Y_{n-2},-mK_X|_{Y_{n-2}})=\bigoplus_{m\geq 0}H^0(Y_{n-2},\mathcal{O}_{Y_{n-2}}(m))$$ and its refinement $W_{\bullet, \bullet}$ by $Y_{n-1}$.

    Now we take a closer look at the geometry of the surface. Let $S=Y_{n-2}\subset \mathbb{P}^4$, it is a smooth complete intersection of hypersurfaces of degree 2 and $n$. Let $L=\overline{PQ}\subset S$ be the line passing through $P$ and $Q$, and let $H$ be the hyperplane class on $S$. Then we can easily compute that 
    $$(H^2)=2n, (H\cdot L)=1,\text{and\ } (L^2)=1-n.$$

    Note we claim that $H-L$ is nef. For any irreducible curve $B\neq L$ in $S$, choose a hyperplane in $\bb{P}^4$ containing $L$ but not $B$. Writing its restriction to $S$ as $L+E$, then we obtain $\left(\left(H-L\right)\cdot B\right)=(E\cdot B)\geq 0$. When $B=L$, by direct computation $\left(\left(H-L\right)\cdot L\right)=n> 0$.

    Also note that the pseudo-effective threshold $T=T(H;L)\leq \frac{2n-1}{n}$. Actually, for any $x>\frac{2n-1}{n}$, if there exists an effective $\bb{R}$-divisor $D\sim_{\bb{R}} H-xL$, then $\left(D\cdot (H-L)\right)\geq 0$ as $H-L$ is nef. But~$\left((H-xL)\cdot (H-L)\right)=2n-1-xn<0.$

    Following the notation in Example \ref{Fujitas Convex Body Barycenter}, we can consider a two-dimensional convex body $\bb{O}=\bb{O}(W_{\bullet, \bullet})$ which is bounded above by the linear function $y=1+(n-1)x$, and we take $e=1, T=\frac{2n-1}{n}$. The volume of $\bb{O}$ is $V=\operatorname{vol}(H)/2=n.$

    By easy computation, we see $T=\frac{2n-1}{n}$ is exactly the estimation $\tau_0$ in Remark \ref{remark after example 2D}, then using equation (\ref{naive maximum barycenter in 2D}) we have $$(\tilde{b}_1, \tilde{b}_2)=\left(\frac{6n^2-4n+1}{6n^2},\frac{4n^3-4n^2+4n-1}{6n^2}\right).$$

    Now we can apply Proposition \ref{reduce to lower dimension} with $\ell=n-2$, and $b_1^{\prime}\leq \tilde{b}_1<1, b_2^{\prime}\leq \tilde{b}_2<\frac{2n}{3}$. Then $$\delta_C(X;-K_X)\geq \min\left\{ n+1, \frac{n+1}{3}\cdot \frac{1}{\tilde{b}_1},\frac{n+1}{3}\cdot \frac{2}{\tilde{b}_2}\right\}=\frac{n+1}{3}\cdot \frac{2}{\tilde{b}_2}\geq \frac{n+1}{n}.$$
\end{proof}
 
\begin{lemma}\label{line}
    Let $n\geq 3$, let X be a smooth member in $\mathcal{X}_{2,n}$, and $C\subset X$ be a curve with $\deg (C)=1$, then~$\delta_C(X)\geq \frac{n+1}{n}$.
\end{lemma}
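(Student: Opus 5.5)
We would follow the strategy of Lemma \ref{contain line}, with $C$ itself as the terminal member of the flag. Since $C$ is a line contained in $X$, every hyperplane section containing $C$ contains it. We first choose the flag
$$Y_{\bullet}\ :\ X=Y_0\supset Y_1\supset \dots \supset Y_{n-2}\supset Y_{n-1}=C,$$
where each $Y_i$ ($1\leq i\leq n-2$) is a general hyperplane section of $Y_{i-1}$ containing $C$. To see that these can be taken smooth, we use the argument of Lemma \ref{smooth}: sections containing $C$ form a linear system of dimension $n+2-2$ on $X$, and at each step the singular ones along $C$ form a family of dimension at most $2$. This works as long as the current member has dimension $\geq 3$, so we need a small adaptation of the count, working with the linear system of hyperplanes containing $C$ rather than only two points. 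We would check this down to the surface $S=Y_{n-2}$, which is a smooth complete intersection of degree $2$ and $n$ in $\mathbb{P}^4$ containing the line $C$. The point here is that for a surface, singular hyperplane sections containing a line are a codimension-one phenomenon one needs to avoid; the dimension count gives this since the system on the threefold $Y_{n-3}$ has enough freedom.

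With this flag, the first $n-2$ steps are proportional to $H=-K_X|$, so Proposition \ref{reduce to lower dimension} applies with $\ell=n-2$ and $Z=C$. There is one subtlety: $Z\cap Y_{n-1}=C$ is not a finite set. Instead of Lemma \ref{GeneralizedLocalDeltaOnCurve} for the last term, we therefore use that $C=Y_{n-1}$ is itself the final flag member. The last-step term becomes $\delta_{C}(C;W^{(n-1)},\mathcal{F})$, and since the center is the whole curve, the only relevant valuation is trivial in the curve direction. So we should instead terminate the Abban--Zhuang estimate at $S$ with $Z\cap S=C$ a divisor. By Example \ref{Fujitas Convex Body Barycenter}, $\delta_C(S;V_\bullet)=1/S(V_\bullet;C)=1/b_1'$. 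Hence the bound we aim for is
$$\delta_C(X)\geq \min\Big\{n+1,\ \tfrac{n+1}{3}\cdot \tfrac{1}{b_1'}\Big\},$$
that is, Theorem \ref{AZ Admissible Flag} with flag ending at $S$, together with the rescaling factor $\frac{n+1}{3}$ from the proof of Proposition \ref{reduce to lower dimension}.

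It remains to bound $b_1'=S(H;C)$ on $S$. As in Lemma \ref{contain line}, we have $(H^2)=2n$, $(H\cdot C)=1$, and $(C^2)=1-n$; moreover $H-C$ is nef and $T(H;C)\leq \frac{2n-1}{n}$. The restricted volume has no fixed part for $x\in[0,1]$, since $H-xC$ restricted to $C$ has degree $1+(n-1)x>0$ and $H-C$ is nef, giving $e=1$. Formula (\ref{naive maximum barycenter in 2D}) then gives $b_1'\leq \tilde b_1=\frac{6n^2-4n+1}{6n^2}<1$. Hence $\frac{n+1}{3b_1'}>\frac{n+1}{3}\geq\frac{n+1}{n}$ for $n\geq 3$, and $n+1\geq\frac{n+1}{n}$, which proves the lemma.

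The main obstacle is the smoothness of the surface $S$ containing the line: the count in Lemma \ref{smooth} was done for two points, and must be redone for the linear system of hyperplane sections containing the whole line $C$. This requires showing that the locus of members singular somewhere along $C$ has codimension $\geq 1$. We would try first the incidence-variety count, in which each $x\in C$ imposes a family of dimension $\leq 2$ of singular members and $x$ varies in a $1$-dimensional family. Failing that, it would suffice that $S$ is normal and smooth along $C$ generically, since the estimate only uses $\operatorname{ord}_C$.
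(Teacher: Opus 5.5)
Your proposal is correct. Once you switch to terminating the flag at $S=Y_{n-2}$, it is essentially the paper's proof: $\delta_C(S;W_{\vec{\bullet}}^{(n-2)})=1/S(W_{\vec{\bullet}}^{(n-2)};C)$ because $C$ is a divisor on $S$, the factor $\frac{n+1}{3}$ comes from $S(W_{\vec{\bullet}}^{(n-2)};C)=\frac{3}{n+1}S(H;C)$ (the paper gets this directly from Lemma \ref{induction formula}), and $S(H;C)\leq\tilde b_1<1$ follows from the computation in Lemma \ref{contain line}. The smoothness you call the main obstacle is already covered by Lemma \ref{smooth}, since a hyperplane through two distinct points of $C$ contains $C$; note that the members singular at a fixed $x\in C$ form a family of projective dimension $1$, not $2$, and this is what makes the count work at the threefold step.
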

\begin{proof}
    When $C$ is a line, we consider the following flag:$$Y_{\bullet}\ :\ X=Y_0\supset Y_1\supset \dots \supset Y_{n-2}$$
    such that every $Y_i$ is a smooth hyperplane section in $Y_{i-1}$ containing $C$ for all $1\leq i\leq n-2$.

    Taking trivial filtration, Theorem \ref{AZ Admissible Flag} gives
    \begin{equation*}\label{inequality2}
        \delta_C(X;-K_X)\geq \min\left\{\min\limits_{0\leq i\leq n-3}\{n+1\}, \delta_{C\cap Y_{n-2}}(Y_{n-2};W_{\vec{\bullet}}^{(n-2)})\right\}=\min\left\{n+1, \delta_{C}(Y_{n-2};W_{\vec{\bullet}}^{(n-2)})\right\},
    \end{equation*}
    as $C\cap Y_{n-2}=C$.
    We only need to compute the last term. Since $C$ is a smooth divisor on the surface~$Y_{n-2}$, the only prime divisor over $Y_{n-2}$ with center containing $C$ is $C$ itself. According to Lemma \ref{valuative criterion for multigraded}, the last term is computed as $$\delta_{C}(Y_{n-2};W_{\vec{\bullet}}^{(n-2)})=\frac{1}{S(W_{\vec{\bullet}}^{(n-2)};C)}.$$ 

    Since $W_{\vec{\bullet}}^{(n-2)}$ satisfies assumptions in Lemma \ref{induction formula}, and $c_1(W_{\vec{\bullet}}^{(n-2)})=\frac{3}{n+1}H$, we have $$S(W_{\vec{\bullet}}^{(n-2)};C)=\frac{3}{n+1}S(H;C).$$

    With Example \ref{Fujitas Convex Body Barycenter} and computation in the proof of Lemma \ref{contain line}, we know $S(H;C)=b_1\leq \tilde{b}_1<1$. Therefore $$\delta_C(X;-K_X)\geq \min \{n+1, \frac{n+1}{3b_1}\}=\frac{n+1}{3b_1}\geq \frac{n+1}{3}.$$
\end{proof}

Finally we justify the second condition in Lemma \ref{criterionInAZ}.

\begin{lemma}\label{beta}
    Let $n\geq 3$, let $X$ be a smooth member in $\mathcal{X}_{2,n}$, and $x\in X$ an arbitrary closed point. Let $E_x$ be the exceptional divisor of the blowup at $x$. Then $\beta(E_x)>0$ when $n\geq 3$.
\end{lemma}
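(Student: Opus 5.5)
The plan is to show $S(-K_X;E_x)<n=A_X(E_x)$ directly from the definition of $S$, by bounding $\operatorname{vol}(\pi^*H-tE_x)$ from above, where $H=-K_X$ is the hyperplane class, $(H^n)=2n$, and $\pi\colon\widetilde X\to X$ is the blowup at $x$ with exceptional divisor $E=E_x$. I do not expect to need the movable threshold bound of Lemma \ref{lemma on movable threshold}. Instead I will use only two facts: an exact formula for the volume on the nef range $t\in[0,1]$, and the concavity of $t\mapsto \operatorname{vol}(\pi^*H-tE)^{1/n}$ on $[0,T(H;E)]$. The concavity is the standard log-concavity of the volume function on the big cone, restricted to a segment.

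\textbf{Step 1 (nef range).} Since $H$ is very ample, the linear system of hyperplane sections through $x$ shows that $\pi^*H-E$ is base point free, hence nef. So $\pi^*H-tE$ is nef for $0\le t\le 1$, and its volume is its top self-intersection. Using $(\pi^*H)^k\cdot E^{n-k}=0$ for $0<k<n$ and $E^n=(-1)^{n-1}$, this gives
\[
\operatorname{vol}(\pi^*H-tE)=2n-t^n,\qquad 0\le t\le 1 .
\]

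\textbf{Step 2 (tangent line bound).} Set $g(t)=\operatorname{vol}(\pi^*H-tE)^{1/n}$. Then $g$ is concave on $[0,T]$ and $g(t)=(2n-t^n)^{1/n}$ near $t=1$. Concavity places the graph of $g$ below its tangent line at $t=1$. Since $g(1)=(2n-1)^{1/n}$ and $g'(1)=-(2n-1)^{1/n-1}$, for all $t\in[1,T]$ we get
\[
\operatorname{vol}(\pi^*H-tE)\le (2n-1)\Bigl(1-\tfrac{t-1}{2n-1}\Bigr)^{n}.
\]
In particular this forces $T\le 2n$. The same bound $T\le 2n$ also follows from intersecting a divisor $D\sim_{\Q}H$ with a general complete intersection curve of $n-1$ hyperplane sections through $x$, which is not contained in $\operatorname{Supp}D$.

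\textbf{Step 3 (integrate).} Splitting the integral at $t=1$ and using the bound of Step 2 on $[1,2n]$:
\[
S(H;E)\le\frac{1}{2n}\Bigl(\int_0^1(2n-t^n)\,dt+\int_1^{2n}(2n-1)\Bigl(1-\tfrac{t-1}{2n-1}\Bigr)^{n}dt\Bigr)
=\frac{1}{2n}\Bigl(2n-\tfrac{1}{n+1}+\tfrac{(2n-1)^2}{n+1}\Bigr)=\frac{3n-1}{n+1}.
\]
The right-hand side is strictly less than $n$ exactly when $(n-1)^2>0$. This holds for $n\ge 3$, so $\beta(E_x)=n-S(H;E_x)>0$.

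The main point needing care is Step 2. I need to justify that the concavity statement holds on the whole interval $[0,T]$, including the boundary behaviour near $T$ where the volume tends to $0$. I also need to check that the tangent-line comparison is legitimate where $g$ is differentiable at $t=1$; this is fine because $g$ agrees with the smooth function $(2n-t^n)^{1/n}$ on a neighbourhood of $t=1$ intersected with $[0,1]$, together with one-sided concavity. Everything else is the routine integration above.
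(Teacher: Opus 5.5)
Your argument is correct, but it takes a genuinely different route from the paper's. The paper never computes the volume function. It bounds $T(H;E_x)\le 2n$ by intersecting with a complete-intersection curve (exactly your alternative remark) and bounds the movable threshold by $\eta(H;E_x)\le\sqrt{2n}$ via Lemma~\ref{lemma on movable threshold}. It then inserts both into Zhuang's inequality $S\le\frac{1}{n+1}T+\frac{n-1}{n+1}\eta$, which requires $X$ to be $\Q$-factorial of Picard rank one. The resulting bound $\frac{2n}{n+1}+\frac{n-1}{n+1}\sqrt{2n}$ grows like $\sqrt{2n}$, so $S<n$ has to be checked by hand for $n=3,4,5$ and by a monotonicity argument for $n>5$.

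You instead use two facts. The first is the exact volume $2n-t^n$ on the nef range $[0,1]$, which only needs very ampleness of $H$. The second is the concavity of $\operatorname{vol}^{1/n}$ along the segment in the big cone. Taking the left derivative at $t=1$ from the explicit formula makes the tangent-line comparison legitimate. Your worry about the endpoint is unnecessary: $\pi^*H-tE$ is big for every $t<T$, and only the integral over $[1,T)$ matters.

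Your arithmetic is right and gives $S(H;E_x)\le\frac{3n-1}{n+1}<3$. This beats the paper's bound by $\frac{(n-1)(\sqrt{2n}-1)}{n+1}$ for every $n$, avoids any Picard-rank hypothesis and case analysis, and even covers $n=2$. The tangent-line step cannot be improved in general: for a smooth quadric with $L=\mathcal{O}(1)$, the volume on $[1,2]$ is exactly $(2-t)^n$.

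What the paper's tool buys is generality. Zhuang's inequality applies to arbitrary divisors $E$ over a Picard-rank-one $X$, where no explicit nef range is available. For the ordinary blowup of a point, however, your argument is both simpler and stronger.
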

\begin{proof}
    Let $H$ be the hyperplane class, then $T(H;E_x)=\sup\{\mathrm{mult}_x(D) \mid 0\leq D\sim_{\mathbb{Q}} H\}$. For any curve~$C\subset~X$ being smooth at $x$, we know $C.D\geq \mathrm{mult}_x(C)\cdot \mathrm{mult}_x(D)$. Choose $C=H_1\cap~\ldots\cap H_{n-1}$ a complete intersection of hyperplanes being smooth at $x$ and not contained in $\operatorname{Supp}D$. Then~$C.D=~(H^n)=2n$, so $T(H; E_x)\leq 2n$ for any $D\sim_{\mathbb{Q}} H$. Moreover, since $H$ is very ample, we know the movable threshold~$\eta(H;E_x)\leq \sqrt{(H^n)}=\sqrt{2n}$ from Lemma \ref{lemma on movable threshold}.

    Since $X$ is $\Q$-factorial and Picard rank $\rho(X)=1$, we can use the inequality given in \autocite[Lemma 4.2]{ZhuangSupperrigidity} to obtain
    $$S(-K_X;E_x)=S(H;E_x)\leq \frac{1}{n+1}T(H;E_x)+\frac{n-1}{n+1}\eta(H;E_x)\leq \frac{2n}{n+1}+\frac{n-1}{n+1}\cdot \sqrt{2n}.$$
    
    When $n=3, 4$ or $5$, can check that $S(-K_X;E_x)<n$ manually.

    When $n>5$, note $S(-K_X;E_x)< 2+\sqrt{2n}$. Let $h(t)=2+\sqrt{2t}-t$, since $h'(t)<0$ when $t>1$ and~$h(6)<0$, we can conclude $h(n)<0$ for all $n>5$.

    Since $x\in X$ is a smooth point, we have $A_X(E_x)=n$, and $\beta(E_x)=A_X(E_x)-S(-K_X;E_x)>0$. 
\end{proof}

\begin{proof}[Proof of Theorem \ref{main theorem}]
    When $n=2$, a smooth member in $\mathcal{X}_{2,2}$ is a smooth del Pezzo surface of degree 4. \cite{KstabilityOfDelPezzo} showed that it is K-polystable. Moreover, since $|\mathrm{Aut}(X)|<\infty$, from Theorem \ref{finite automorphism} we know it is K-stable.
    
    When $n\geq 3$, Lemma \ref{criterionInAZ} is satisfied: the first condition is justified in Lemmas \ref{not contain line}, \ref{contain line} and \ref{line}, and the second condition is justified in Lemma \ref{beta}.
\end{proof}

\medskip
\printbibliography

@article{AZ1,
   author = {Abban, Hamid and Zhuang, Ziquan},
   title = {K-stability of Fano varieties via admissible flags},
   journal = {Forum of Mathematics, Pi},
   volume = {10},
   ISSN = {2050-5086},
   DOI = {10.1017/fmp.2022.11},
   year = {2022},
   type = {Journal Article}
}

@article{AZ2,
   author = {Abban, Hamid and Zhuang, Ziquan},
   title = {Seshadri Constants and K-Stability of Fano Manifolds},
   journal = {Duke Mathematical Journal},
   volume = {172},
   number = {6},
   pages = {1109-1144},
   ISSN = {0012-7094},
   DOI = {10.1215/00127094-2022-0026},
   url = {<Go to ISI>://WOS:000978265000002},
   year = {2023},
   type = {Journal Article}
}

@article{ZhuangSupperrigidity,
   author = {Zhuang, Ziquan},
   title = {Birational superrigidity and K-stability of Fano complete intersections of index 1},
   journal = {Duke mathematical journal},
   volume = {169},
   number = {12},
   pages = {2205-2229},
   ISSN = {0012-7094},
   DOI = {10.1215/00127094-2020-0010},
   year = {2020},
   type = {Journal Article}
}

@article{Berman,
   author = {Berman, Robert J.},
   title = {K-polystability of $\mathbb{Q}$-Fano varieties admitting Kähler-Einstein metrics},
   journal = {Inventiones Mathematicae},
   volume = {203},
   number = {3},
   pages = {973-1025},
   DOI = {10.1007/s00222-015-0607-7},
   year = {2016},
   type = {Journal Article}
}

@article{BJThresholdsValuationandK-stability,
   author = {Blum, Harold and Jonsson, Mattias},
   title = {Thresholds, valuations, and K-stability},
   journal = {Advances in mathematics},
   volume = {365},
   %pages = {107062},
   ISSN = {0001-8708},
   DOI = {10.1016/j.aim.2020.107062},
   year = {2020},
   type = {Journal Article}
}

@article{ChenDnoaldosnSunI,
   author = {Chen, Xiuxiong and Donaldson, Simon and Sun, Song},
   title = {K$\ddot{a}$hler-Einstein metrics on Fano manifolds. I: Approximation of metrics with cone singularities},
   journal = {Journal of the American Mathematical Society},
   volume = {28},
   number = {1},
   pages = {183-197},
   ISSN = {0894-0347},
   DOI = {10.1090/S0894-0347-2014-00799-2},
   year = {2015},
   type = {Journal Article}
}

@article{ChenDnoaldosnSunII,
   author = {Chen, Xiuxiong and Donaldson, Simon and Sun, Song},
   title = {K$\ddot{a}$hler-Einstein metrics on Fano manifolds. II: limits with cone angle less than 2$\pi$},
   journal = {Journal of the American Mathematical Society},
   volume = {28},
   number = {1},
   pages = {199-234},
   ISSN = {0894-0347},
   DOI = {10.1090/S0894-0347-2014-00800-6},
   year = {2015},
   type = {Journal Article}
}

@article{ChenDnoaldosnSunIII,
   author = {Chen, Xiuxiong and Donaldson, Simon and Sun, Song},
   title = {K$\ddot{a}$hler-Einstein metrics on Fano manifolds. III: limits as cone angle approaches 2$\pi$ and completion of the main proof},
   journal = {Journal of the American Mathematical Society},
   volume = {28},
   number = {1},
   pages = {235-278},
   ISSN = {0894-0347},
   DOI = {10.1090/S0894-0347-2014-00801-8},
   year = {2015},
   type = {Journal Article}
}

@book{Xubook, 
place={Cambridge}, 
series={New Mathematical Monographs}, 
title={K-stability of {F}ano Varieties}, 
publisher={Cambridge University Press}, 
author={Xu, Chenyang}, 
year={2025}, 
collection={New Mathematical Monographs}
}

@article{TianK-stability,
   author = {Tian, Gang},
   title = {{K}{\"a}hler-{E}instein metrics with positive scalar curvature},
   journal = {Inventiones mathematicae},
   volume = {130},
   number = {1},
   pages = {1-37},
   ISSN = {0020-9910},
   DOI = {10.1007/s002220050176},
   year = {1997},
   type = {Journal Article}
}

@article{LXZ,
   author = {Liu, Yuchen and Xu, Chenyang and Zhuang, Ziquan},
   title = {Finite generation for valuations computing stability thresholds and applications to {K}-stability},
   journal = {Annals of Mathematics},
   volume = {196},
   number = {2},
   pages = {507-566},
   DOI = {10.4007/annals.2022.196.2.2},
   url = {https://doi.org/10.4007/annals.2022.196.2.2},
   year = {2022},
   type = {Journal Article}
}

@article{ReductiveAutomorphism,
   author = {Alper, Jarod and Blum, Harold and Halpern-Leistner, Daniel and Xu, Chenyang},
   title = {Reductivity of the automorphism group of {K}-polystable {F}ano varieties},
   journal = {Inventiones mathematicae},
   volume = {222},
   number = {3},
   pages = {995-1032},
   ISSN = {0020-9910},
   DOI = {10.1007/s00222-020-00987-2},
   year = {2020},
   type = {Journal Article}
}

@article{FiniteAutomorphism,
   author = {Blum, Harold and Xu, Chenyang},
   title = {Uniqueness of {K}-polystable degenerations of {F}ano varieties},
   journal = {Annals of mathematics},
   volume = {190},
   number = {2},
   pages = {609-656},
   ISSN = {0003-486X},
   DOI = {10.4007/annals.2019.190.2.4},
   year = {2019},
   type = {Journal Article}
}

@article{KstabilityOfDelPezzo,
   author = {Tian, Gang and Yau, Shing-Tung},
   title = {{K}{\"a}hler-{E}instein metrics on complex surfaces with $c_1>0$},
   journal = {Communications in Mathematical Physics},
   volume = {112},
   number = {1},
   pages = {175-203},
   ISSN = {1432-0916},
   DOI = {10.1007/BF01217685},
   url = {https://doi.org/10.1007/BF01217685},
   year = {1987},
   type = {Journal Article}
}

@article{Fujita_2019, 
title={K-stability of {F}ano manifolds with not small alpha invariants}, 
volume={18}, 
DOI={10.1017/S1474748017000111}, 
number={3}, 
journal={Journal of the Institute of Mathematics of Jussieu}, 
author={Fujita, Kento}, 
year={2019}, 
pages={519–530}}

@article{ZhuangOptimalDestabilizing,
   author = {Zhuang, Ziquan},
   title = {Optimal destabilizing centers and equivariant {K}-stability},
   journal = {Inventiones mathematicae},
   volume = {226},
   number = {1},
   pages = {195-223},
   ISSN = {0020-9910},
   DOI = {10.1007/s00222-021-01046-0},
   year = {2021},
   type = {Journal Article}
}

@article {FujitaOdakaLowerAlpha,
    AUTHOR = {Fujita, Kento and Odaka, Yuji},
     TITLE = {On the {K}-stability of {F}ano varieties and anticanonical
              divisors},
   JOURNAL = {Tohoku Mathematical Journal},
  FJOURNAL = {The Tohoku Mathematical Journal. Second Series},
    VOLUME = {70},
      YEAR = {2018},
    NUMBER = {4},
     PAGES = {511--521},
      ISSN = {0040-8735,2186-585X},
   MRCLASS = {14J45 (32Q26 53C55)},
  MRNUMBER = {3896135},
MRREVIEWER = {Andreas\ H\"oring},
       DOI = {10.2748/tmj/1546570823},
       URL = {https://doi.org/10.2748/tmj/1546570823}
}

@article{lazarsfeld2009convex,
  author  = {Lazarsfeld, Robert and Musta{\c{t}}{\u{a}}, Mircea},
  title   = {Convex Bodies Associated to Linear Series},
  journal = {Annales scientifiques de l'\'Ecole Normale Sup\'erieure},
  volume  = {42},
  number  = {5},
  pages   = {783--835},
  year    = {2009},
  doi     = {10.24033/asens.2109}
}

@article{szekelyhidi2015filtrations,
  title={Filtrations and test-configurations: With an appendix by Sebastien Boucksom},
  author={Sz{\'e}kelyhidi, G{\'a}bor},
  journal={Mathematische Annalen},
  volume={362},
  number={1},
  pages={451--484},
  year={2015},
  publisher={Springer}
}

@article{fujita2026kstabilityfanoweightedhypersurfaces,
      title={K-stability of Fano weighted hypersurfaces via plt flags and convex geometry}, 
      author={Livia Campo and Kento Fujita and Taro Sano and Luca Tasin},
      year={2026},
      journal={arXiv},
      number={2601.02974},
      archivePrefix={arXiv},
      primaryClass={math.AG},
      url={https://arxiv.org/abs/2601.02974}, 
}

@article{boucksom2014corps,
  title={Corps d'Okounkov (d'apr{\`e}s Okounkov, Lazarsfeld-Musta\c{t}\u{a} et Kaveh-Khovanskii)},
  author={Boucksom, S{\'e}bastien},
  journal={S{\'e}minaire Bourbaki. Ast{\'e}risque},
  number={361},
  year={2014},
  pages={1--41},
}

@article{Fujita_IMRN,
  title={On {K}-stability for {F}ano threefolds of rank 3 and degree 28},
  author={Fujita, Kento},
  journal={International Mathematics Research Notices},
  number={15},
  pages={12601--12784},
  year={2023},
  publisher={Oxford University Press}
}

@article{trusiani2026YTD,
      title={A solution to the {Y}au-{T}ian-{D}onaldson conjecture through special {F}ujita approximations}, 
      author={Antonio Trusiani},
      year={2026},
      journal={arXiv},
      number={2605.30063},
      archivePrefix={arXiv},
      primaryClass={math.AG},
      url={https://arxiv.org/abs/2605.30063}, 
}

@article{liu2022CubicFourfold,
  title={{K}-stability of cubic fourfolds},
  author={Liu, Yuchen},
  journal={Journal f{\"u}r die reine und angewandte Mathematik},
  %volume={2022},
  volume={786},
  pages={55--77},
  year={2022},
  publisher={De Gruyter}
}

@article{Ghigi2004symmetries,
  title={Symmetries, quotients and {K}{\"a}hler-{E}instein metrics},
  author={Arezzo, Claudio and Ghigi, Alessandro and Pirola, Gian Pietro},
  journal={Journal f{\"u}r die reine und angewandte Mathematik},
  year={2006},
  volume={591},
  pages={177--200}
}

@article{JonssonMustata,
     author = {Jonsson, Mattias and Musta\c{t}\u{a}, Mircea},
     title = {Valuations and asymptotic invariants for sequences of ideals},
     journal = {Annales de l'Institut Fourier},
     pages = {2145--2209},
     year = {2012},
     %publisher = {Association des Annales de l{\textquoteright}institut Fourier},
     volume = {62},
     number = {6},
     %doi = {10.5802/aif.2746},
     %mrnumber = {3060755},
     %zbl = {1272.14016},
     %language = {en},
     %url = {https://aif.centre-mersenne.org/articles/10.5802/aif.2746/}
}

@article{generalCI,
      title={K-stability of complete intersection}, 
      author={Livia Campo and Kristin DeVleming and Tiago Duarte Guerreiro},
      year={2026},
      journal={arXiv},
      number={2608.09293},
      archivePrefix={arXiv},
      primaryClass={math.AG},
      %url={https://arxiv.org/abs/2601.02974}, 
}

@article{LiuXucubicthreefold,
  title={K-stability of cubic threefolds},
  author={Yuchen Liu and Chenyang Xu},
  journal={Duke Mathematical Journal},
  year={2017},
  %url={https://api.semanticscholar.org/CorpusID:119329626}
}

@article{LiuCubicFourfold,
  title={K-stability of cubic fourfolds},
  author={Liu, Yuchen},
  journal={Journal f{\"u}r die reine und angewandte Mathematik (Crelles Journal)},
  volume={2022},
  number={786},
  pages={55-77},
  year={2022},
  publisher={De Gruyter}
}

@article{BenoistCompleteIntersection,
  title={Quelques espaces de modules d'intersections compl{\`e}tes lisses qui sont quasi-projectifs.},
  author={Benoist, Olivier},
  journal={Journal of the European Mathematical Society (EMS Publishing)},
  volume={16},
  number={8},
  year={2014},
  pages={1749-1774}
}

@article{ADL2023QuarticK3Surfaces,
  title = {K-stability and birational models of moduli of quartic K3 surfaces},
  author = {Ascher, Kenneth and DeVleming, Kristin and Liu, Yuchen},
  journal = {Inventiones mathematicae},
  volume = {232},
  number = {2},
  pages = {471--552},
  year = {2023},
  publisher = {Springer},
}

\end{document}